\newtheorem{theorem}{Theorem}
\theoremstyle{plain}
\newtheorem{conjecture}{Conjecture}
\newtheorem{corollary}{Corollary}
\newtheorem{definition}{Definition}
\newtheorem{lemma}{Lemma}
\newtheorem{problem}{Problem}
\newtheorem{proposition}{Proposition}
\numberwithin{equation}{section}
\begin{document}
\title[Short Title]{On Commutativity and Finiteness in Groups}
\author{Ricardo N. Oliveira}
\address{Departamento de Matem\'{a}tica, Universidade Federal de Goias,
Goiania, Goias- Brazil}
\email{ricardo@mat.ufg.br }
\author{Said N. Sidki}
\address{Departamento de Matem\'{a}tica, Universidade de Bras\'{\i}lia, Bras%
\'{\i}lia, DF 70910-900, Brazil}
\email{sidki@mat.unb.br}
\date{January 02, 2009}
\subjclass[2000]{Primary 20F05, 20D15; Secondary 2F18, 20B40}
\keywords{ commutativity graph, finiteness, weak commutativity, double
cosets, nilpotency class, solvablity degree, group extensions}

\begin{abstract}
The second author introduced notions of weak permutablity and commutativity
between groups and proved the finiteness of a group generated by two weakly
permutable finite subgroups. Two groups $H,K$ weakly commute provided there
exists a bijection $f:H\rightarrow K$ which fixes the identity and such that 
$h$ commutes with its image $h^{f}$ for all $h\in H$. The present paper
gives support to conjectures about the nilpotency of groups generated by two
weakly commuting finite abelian groups $H,K$.
\end{abstract}

\thanks{The authors thank Alexander Hulpke for assitance with the double
cosets package in GAP. The second author acknowledges support from the
Brazilian agencies CNPq and FAPDF.}
\maketitle

\section{Introduction}

Commutativity in a group can be depicted by its commutativity graph which
has for vertices the elements of the group and where two elements are joined
by an edge provided they commute. Greater understanding of the commutativity
graph of finite groups has been achieved in recent years and this has had
important applications; see for example \cite{rss}.

For a nontrivial finite $p$-group, it is an elementary and fundamental fact
that it has a non-trivial center and therefore each element in its
commutativity graph is connected to every element in the center. Now,
suppose a finite group $G$ contains a non-trivial $p$-group $A$ such that
every $p$-element in $G$ commutes with \textit{some} non-trivial element in $%
A$. Does it follow that $G$ contains a non-trivial normal $p$-subgroup? In
1976, the second author proved in \cite{sid76} results along such lines for $%
p=2$ and formulated the following conjecture: \textit{if a finite group }$G$%
\textit{\ contains a non-trivial elementary abelian }$2$\textit{-group }$A$%
\textit{\ such that every involution in }$G$\textit{\ commutes with some
involution from }$A$\textit{\ then }$A\cap O_{2}\left( G\right) $\textit{\
is non-trivial}. This was settled in 2006 by Aschbacher, Guralnick and Segev
in \cite{ags07}. The proof made significant use of the classification
theorem of finite simple groups. It was also shown that the result applies
to the Quillen conjecture from 1978 about the Quillen complex at the prime $%
2 $; see \cite{quillen78}.

A configuration which arises in this context is one where the commutation
between elements of $A$ and those of one of its conjugates $B=A^{g}$ is
defined by a bijection. An approach which we had taken in 1980 to such a
weak form of commutativity or permutability was through combinatorial group
theory. The following finiteness criterion was proven in \cite{sid80}.

\begin{theorem}
Let $H,K$ be finite groups having equal orders $n$ and let $f:H\rightarrow K$
be a bijection which fixes the identity. Then for any two maps $%
a:H\rightarrow K,b:H\rightarrow H$, the group 
\begin{equation*}
G\left( H,K;f,a,b\right) =\left\langle H,K\mid hh^{f}=h^{a}h^{b}\text{ for
all }h\in H\right\rangle
\end{equation*}%
is finite of order at most $n\exp (n-1)$, where $h^{f},h^{a},h^{b}$ denote
the images of $h$ under the maps $f,a,b$, respectively.
\end{theorem}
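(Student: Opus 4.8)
The object to bound is the index $[G:H]$, for then $|G|=|H|\,[G:H]\le n\,[G:H]$; equivalently, I want an upper bound on the size of the coset space $\Omega=H\backslash G$, on which $G$ acts on the right with base point $\omega_{0}=H$. The plan is to turn the defining relations into a one‑sided rewriting procedure on words in $H$ and $K$, produce a normal form for each element controlled by a short sequence of $K$‑syllables, and then count normal forms.

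The basic move is this. Given $h\in H$ and $k\in K$, put $h_{k}=f^{-1}(k)\in H$, so $h_{k}^{f}=k$; inserting $h_{k}^{-1}h_{k}$ and applying the defining relation to $h_{k}$ gives
\[
hk \;=\; (hh_{k}^{-1})\,(h_{k}h_{k}^{f}) \;=\; (hh_{k}^{-1})\cdot h_{k}^{a}\cdot h_{k}^{b},
\qquad hh_{k}^{-1}\in H,\ h_{k}^{a}\in K,\ h_{k}^{b}\in H .
\]
Thus an $H$‑syllable followed by a $K$‑syllable $k$ can be rewritten with the $K$‑syllable replaced by $\sigma(k):=f^{-1}(k)^{a}$, the crucial feature being that $\sigma(k)$ depends on $k$ alone. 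Iterating, a $K$‑syllable is driven along its forward orbit under the self‑map $\sigma$ of the finite set $K$, and whenever that orbit reaches $1$ the syllable is absorbed into $H$. Applying this move systematically (together with the relations read in the reverse direction, $h^{a}h^{b}\mapsto hh^{f}$, to handle $K$‑then‑$H$ patterns), every $g\in G$, initially a word $h_{0}k_{1}h_{1}\cdots k_{m}h_{m}$, should be rewritten into a \emph{normal form} in which no $K$‑syllable can be absorbed and no two can be merged. One then argues that in such a reduced form the $K$‑syllables $(k_{1},\dots ,k_{m})$ are pairwise distinct non‑identity elements of $K$ — so $m\le n-1$ — and that, after passing to the coset $Hg$ (which kills the leading $H$‑syllable), the coset is determined by the sequence $(k_{1},\dots ,k_{m})$ up to controlled $H$‑data.

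Granting the normal‑form analysis, the count is immediate: $|\Omega|$ is bounded by the number of sequences of pairwise distinct non‑identity elements of $K$ of lengths $0,1,\dots ,n-1$, which is exactly $\exp(n-1)$, and hence $|G|\le n\exp(n-1)$. The main obstacle is precisely the middle step: proving that the rewriting terminates and that the resulting reduced form has the asserted distinctness properties. The threat is an infinite descent, or a $\sigma$‑orbit that cycles without ever meeting $1$ together with an obstinate refusal of the corresponding $K$‑syllables to separate; excluding these requires a careful joint analysis of the substitution $\sigma$, the $H$‑adjustments it drags along, and the remaining relations, and this is where the finiteness of $H$ and the bijectivity of $f$ are genuinely used. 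The reduction to $\Omega$ and the final arrangement count are, by comparison, routine bookkeeping.
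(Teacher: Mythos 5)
The paper does not actually prove this theorem; it is quoted from \cite{sid80} with the remark that the argument is Sanov's, so your proposal can only be judged on its own completeness, and it is not complete. Your basic identity is correct and is indeed the engine of the known proof: for $h\in H$, $k\in K$ and $h_{k}=f^{-1}(k)$ one has $hk=(hh_{k}^{-1})\,h_{k}^{a}\,h_{k}^{b}$, so a $K$-syllable is replaced by $\sigma(k)=f^{-1}(k)^{a}$ at the cost of $H$-debris on either side, and your reading of $\exp(n-1)$ as the number of sequences of distinct elements of an $(n-1)$-set is the right one for the bound. But everything that makes the theorem true is concentrated in the step you defer. Since $a$ is an arbitrary map, $\sigma$ is an arbitrary self-map of $K$: no $\sigma$-orbit need ever reach $e$, and the move can destroy the very distinctness you want. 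For instance, if $a$ is constant equal to some $c\in K^{\#}$ on $H^{\#}$ (and fixes $e$), one application of your move turns every nontrivial $K$-syllable into $c$; merging two consecutive $c$'s then requires the intervening $H$-syllable to become trivial, and your moves only multiply it on the right by the fixed elements $f^{-1}(\sigma^{j}(c))^{-1}=f^{-1}(c)^{-1}$, which reaches $e$ only if that syllable happens to lie in $\left\langle f^{-1}(c)\right\rangle$. So the rewriting system as described neither terminates in a form with pairwise distinct $K$-syllables nor visibly shortens long words; the Sanov-type core --- take a minimal-length expression and show that a repetition among suitably chosen syllables forces a strictly shorter one --- is the proof, not a detail of it.

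The final count is also not ``routine bookkeeping'' as set up. To get $[G:H]\le\exp(n-1)$ you need the coset $Hg$ to be completely determined by the sequence $(k_{1},\dots,k_{m})$; ``up to controlled $H$-data'' leaves room for a factor of up to $n^{m}$ coming from the interior and trailing $H$-syllables, which would ruin the bound. The shape $n\cdot\exp(n-1)$ indicates that the intended normal form carries exactly one free $H$-coordinate plus a sequence of distinct indices from an $(n-1)$-set --- for example products of distinct elements of $\left\{ xx^{f}\mid x\in H^{\#}\right\}$, each such factor being a single letter determined by $x$ --- with no residual freedom in between. You would need to exhibit such a form and prove both its existence and the distinctness property before the arithmetic can be carried out.
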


It is to be noted that the proof uses the same argument as that given by I.
N. Sanov for the local finiteness of groups of exponent $4$; see \cite%
{vaug-lee}.

The notion of weak commutativity between $H$ and $K$ by way of a bijection
is formalized by the group 
\begin{equation*}
G\left( H,K;f\right) =\left\langle H,K\mid hh^{f}=h^{f}h\text{ for all }h\in
H\right\rangle \text{.}
\end{equation*}%
When $H$ is isomorphic to $K$, we simplify the notation to $G\left(
H;f\right) $.

If $f$ itself is an isomorphism from $H$ onto $K$ then $G\left( H;f\right) $
is the same group as $\chi \left( H\right) $ and $f$\ as $\psi $ in \cite%
{sid80}. In addition to finiteness, the operator $\chi $ preserves a number
of other group properties such as being a finite $p$-group. Indeed, it was
shown later in \cite{grs86} that more generally, if $H$ is finitely
generated nilpotent then so is $\chi \left( H\right) $.

We are guided in this paper by the following conjecture.

\begin{conjecture}
Let $H$ and $K$ be finite nilpotent groups of equal order and $%
f:H^{\#}\rightarrow K^{\#}$ a bijection. Then $G\left( H,K;f\right) $ is
also nilpotent.
\end{conjecture}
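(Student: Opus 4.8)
\medskip
\noindent\emph{Towards the conjecture.}
Finiteness is already in hand: applying Theorem~1 with $a=f$ and $b$ the identity map of $H$ (equivalently, extending $f$ so that it fixes $1$), the group $G=G\left( H,K;f\right) $ is finite of order at most $n\exp (n-1)$. So the whole content of the conjecture is that the \emph{finite} group $G$ is nilpotent, i.e.\ that each of its Sylow subgroups is normal. My first step would be to reduce to the case that $H$ and $K$ are $p$-groups. Since $H,K$ are nilpotent we may write $H=\prod_{p}H_{p}$ and $K=\prod_{p}K_{p}$ for the primary decompositions, with $|H_{p}|=|K_{p}|$ because $n$ factors uniquely; the difficulty is that the bijection $f$ need not carry $H_{p}^{\#}$ onto $K_{p}^{\#}$, so $G$ is not visibly the direct product of the groups $G(H_{p},K_{p};f_{p})$. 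What one can still use is that the genuine subgroup $H_{p}\le H$ satisfies $[h,h^{f}]=1$ in $G$ for every $h\in H_{p}$, and that $H_{p},K_{p}$ are $p$-subgroups of $G$; from this I would peel the primes off one at a time, by induction on the number of primes dividing $n$ and a Burnside/transfer argument applied to the normalizer of a Sylow subgroup for the largest prime. Making this reduction airtight is the first point that needs real care.

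Assume now that $H$ and $K$ are $p$-groups; the goal becomes that $G$ is a $p$-group. One quick case is $H\cong K$ cyclic: the relations force $H$ to centralize the $f$-images of the $\varphi(|H|)$ generators of $H$, which are $\varphi(|H|)$ distinct elements of $K^{\#}$; since at most $|H|/p-1<\varphi(|H|)$ elements of $K^{\#}$ fail to generate $K$, one of those images generates $K$, so $H$ centralizes $K$ and $G$ is already abelian. In general a useful reformulation comes from the two canonical epimorphisms $\rho_{H}\colon G\to H$ (identity on $H$, trivial on $K$) and $\rho_{K}\colon G\to K$ (identity on $K$, trivial on $H$), which exist because the defining relations become $[h,1]=1$ under them: together they give $G/V\cong H\times K$ with $V=\ker \rho_{H}\cap \ker \rho_{K}$ the normal subgroup generated by all commutators $[h,k]$ ($h\in H$, $k\in K$), and since $H\times K$ is a $p$-group, $G$ is a $p$-group if and only if $V$ is. For $H\cong K$ elementary abelian of rank $2$ one can still settle this by a direct analysis of the relations among the $p+1$ cyclic subgroups of order $p$ in $H$ and in $K$.

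The general $p$-group case I would attack by contradiction, taking a counterexample of least order and passing to $\bar G=G/O_{p}(G)$: this is a nontrivial finite group with $O_{p}(\bar G)=1$, generated by the two $p$-subgroups $\bar H,\bar K$, in which the weak-commutativity relations survive in the weakened form that the two coordinate projections of the remaining relation set are onto $\bar H^{\#}$ and onto $\bar K^{\#}$. The crux is then to show that \emph{no} nontrivial finite group with trivial $O_{p}$ can be generated by two $p$-subgroups standing in such a relation --- a statement in the exact spirit of the $p=2$ conjecture of \cite{sid76} resolved in \cite{ags07}. The intended route is to reduce to the (almost) simple sections of $\bar G$ and to exploit that in such a section the centralizers of elements of order $p$ are too constrained to accommodate two generating $p$-subgroups in weak commutativity; the small-rank cases are elementary, but I expect that, as for its $p=2$ predecessor, the general case will require input from the classification of finite simple groups or a genuinely new combinatorial idea. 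This is where the main obstacle lies, and it is why one would first establish the conjecture unconditionally only for $H,K$ abelian of small rank.

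Once the $p$-group case is in place, reassembling the primes from the first step gives that every Sylow subgroup of $G$ is normal, so $G$ is nilpotent, which is the assertion. In parallel I would track explicit bounds on the nilpotency class of $G$ in terms of $n$ and the classes of $H$ and $K$, and test them against machine computation --- using the double-coset description of $G$ --- both as a safeguard against error and to calibrate what the sharp class bound should be.
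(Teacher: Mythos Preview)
The statement you are addressing is labelled a \emph{Conjecture} in the paper, and the paper does not prove it. There is therefore no ``paper's own proof'' to compare against: the authors explicitly present this as an open problem, and the remainder of the article offers only partial evidence---nilpotency of the metabelian quotient $G/G''$ when $H,K$ are abelian (Theorem~7), the extension-theoretic results on central factors (Theorems~5 and~6), and computer verification for $A_{p,k}$ with $p,k$ small (Theorem~4 and Section~6).

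Your proposal, for its part, is candid about being a strategy rather than a proof. The reduction to the $p$-group case via transfer is not carried out, and you yourself flag the crux: after passing to $\bar G=G/O_{p}(G)$, you need that no finite group with $O_{p}(\bar G)=1$ is generated by two $p$-subgroups in weak commutativity, and you concede that this ``will require input from the classification of finite simple groups or a genuinely new combinatorial idea.'' That is the entire content of the conjecture; everything preceding it (finiteness from Theorem~1, the two retractions $\rho_H,\rho_K$, the cyclic case) is already known or elementary. So what you have written is an honest outline of where the difficulty lies, not a proof, and it matches the paper's own assessment that the problem is open. If you want to make concrete progress along the lines the paper actually pursues, the natural next target is to remove the metabelian hypothesis in Theorem~7, or to settle Problem~1 on the nilpotency degree of the $U(m,n;f)$ action.
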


The construction $G\left( H,K;f\right) $ lends itself well to extensions of
groups, as follows. Let $\widetilde{H},\widetilde{K}$ be groups having
normal subgroups $M,N$ respectively and let $\frac{\widetilde{H}}{M}=H$, $%
\frac{\widetilde{K}}{N}=K$. Let $f:H\rightarrow K$, $\alpha :M\rightarrow N$
be bijections both fixing $e$. Then, $f$ and $\alpha $ can be extended in a
natural manner to a bijection $f^{\ast }:\widetilde{H}\rightarrow \widetilde{%
K}$ fixing $e$ such that $\widetilde{G}=G\left( \widetilde{H},\widetilde{K}%
;f^{\ast }\right) $ modulo the normal closure $V$ of $\left\langle
M,N\right\rangle $ is isomorphic to $G\left( H,K;f\right) $. We use this
process to produce an ascending chain of groups of $G\left( H,K;f\right) $
type. For central extensions, we prove

\begin{theorem}
Maintain the above notation. \textit{Suppose }$M,N$\textit{\ are central }%
subgroups of $\widetilde{H},\widetilde{K}$ respectively and that $G\left(
M,N;\alpha \right) $ is abelian. \textit{Then, }$V$ is an abelian group. If
furthermore $H,K$ are abelian then $\left[ V,\widetilde{G}\right] ^{2}$ is
central in $\widetilde{G}$.
\end{theorem}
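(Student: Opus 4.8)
\textit{Step 1 (set-up).} I would first identify $\widetilde H$ and $\widetilde K$, and hence $M$ and $N$, with subgroups of $\widetilde G$: the assignment sending $\widetilde H$ identically and $\widetilde K$ to $\{e\}$ kills every defining relator of $\widetilde G$, so it extends to a retraction $\lambda_{H}\colon\widetilde G\to\widetilde H$, and symmetrically there is $\lambda_{K}\colon\widetilde G\to\widetilde K$. These give $\widetilde H\cap V=M$, $\widetilde K\cap V=N$, $\lambda_{H}(V)=M$, $\lambda_{K}(V)=N$, $\lambda_{H}(N^{\widetilde G})=\{e\}$ and $\lambda_{K}(M^{\widetilde G})=\{e\}$. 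Since $f^{\ast}$ is built from $\alpha$ we have $(m)^{f^{\ast}}=m^{\alpha}$ for $m\in M$, so the relation of $\widetilde G$ at $\tilde h=m$ reads $[m,m^{\alpha}]=1$; hence $\langle M,N\rangle\le\widetilde G$ satisfies all the defining relations of $G(M,N;\alpha)$, is a quotient of it, and so is abelian. In particular $[M,N]=1$ in $\widetilde G$ and $V=M^{\widetilde G}N^{\widetilde G}$.

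\textit{Step 2 (key identity; structure of $V$).} Write each $\tilde h\in\widetilde H$ uniquely as $\hat h\,m$ with $\hat h$ in a fixed transversal of $M$ chosen so that $\hat e=e$, whence $(\hat h m)^{f^{\ast}}=\widehat{h^{f}}\,m^{\alpha}$, the elements $\widehat{h^{f}}$ forming the corresponding transversal of $N$ in $\widetilde K$. Expanding $\hat h m\,\widehat{h^{f}}m^{\alpha}=\widehat{h^{f}}m^{\alpha}\,\hat h m$ and cancelling by means of $[M,\widetilde H]=[N,\widetilde K]=[M,N]=[m,m^{\alpha}]=1$ and the case $m=e$ (which gives $[\hat h,\widehat{h^{f}}]=1$) leaves
\[
[\hat h,m^{\alpha}]=\bigl[\,m,(\widehat{h^{f}})^{-1}\,\bigr]^{\hat h}.
\]
Letting $h$ and $m$ vary, and using $[M,N]=1$ to absorb the $M$‑parts of elements of $\widetilde H$, this says that $[\widetilde H,N]$ is generated by $\widetilde H$‑conjugates of elements of $[M,\widetilde K]$; the mirror computation, parametrising the relations of $\widetilde G$ by $\widetilde K$ via $(f^{\ast})^{-1}$, gives that $[\widetilde K,M]$ is generated by $\widetilde K$‑conjugates of elements of $[N,\widetilde H]$. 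Since $[M,\widetilde K]\le M^{\widetilde G}$ and $[M,\widetilde K]=[\widetilde K,M]\le N^{\widetilde G}$ by the mirror, we get $[M,\widetilde K]\le M^{\widetilde G}\cap N^{\widetilde G}=:R$, and symmetrically $[N,\widetilde H]\le R$; together with $[M,\widetilde H]=[N,\widetilde K]=1$ this forces $[M,\widetilde G]\le R$ and $[N,\widetilde G]\le R$. Hence $M^{\widetilde G}=MR$, $N^{\widetilde G}=NR$, $V=MNR$, and applying $\lambda_{H}$ and $\lambda_{K}$ one finds $MN\cap R=1$ and $R=[M,\widetilde G]=[N,\widetilde G]=[V,\widetilde G]$.

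\textit{Step 3 ($V$ is abelian).} From $[M,\widetilde H]=[M,N]=1$ one sees that $M$ centralises $[\widetilde H,N]$, and symmetrically $N$ centralises $[\widetilde K,M]$; as $R=[N,\widetilde G]$ is generated by $\widetilde G$‑conjugates of $[\widetilde H,N]$ and $R=[M,\widetilde G]$ by $\widetilde G$‑conjugates of $[\widetilde K,M]$, this forces $[M,R]\le[R,R]$ and $[N,R]\le[R,R]$, so that $[V,V]=[R,R]$. The first assertion is therefore equivalent to the statement that $R$ is abelian, which I would establish by substituting the key identity and its mirror into the commutators of a generating set of $R$ and absorbing the cross‑terms with the three‑subgroups lemma. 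This final bookkeeping is the step I expect to be the main obstacle, and it is precisely where the bijectivity of $f^{\ast}$ — rather than merely the centrality of $M$ and $N$ — is used.

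\textit{Step 4 ($H,K$ abelian).} Now $\widetilde H'\le M$ and $\widetilde K'\le N$, both lying in the abelian group $V$. Since $V$ is abelian and $[V,\widetilde G]=R$, the desired $[V,\widetilde G]^{2}\le Z(\widetilde G)$ is equivalent to $[V,\widetilde G,\widetilde G]$ having exponent dividing $2$. Refining the key identity under the new hypothesis gives $[V,\widetilde H,\widetilde H]=[V,\widetilde K,\widetilde K]=1$, so $[V,\widetilde G,\widetilde G]$ is generated by the mixed triple commutators $[v,\tilde h,\tilde k]$ and $[v,\tilde k,\tilde h]$ ($\tilde h\in\widetilde H$, $\tilde k\in\widetilde K$); a Hall--Witt computation, using $\widetilde H'\le V$, $\widetilde K'\le V$ and the antisymmetry of the pairing $\tilde h\leftrightarrow(\tilde h)^{f^{\ast}}$, shows that each of these has order dividing $2$. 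As $[V,\widetilde G,\widetilde G]$ lies in the abelian $R$, it then has exponent dividing $2$, and $[V,\widetilde G]^{2}\le Z(\widetilde G)$ follows.
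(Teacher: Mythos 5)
Your overall plan is the paper's: extract from the defining relation at an element $mh$ a commutator identity that converts $[\widetilde{H},N]$ into conjugates of $[M,\widetilde{K}]$ and vice versa, conclude $V=MN[M,\widetilde{K}]$ with $[V,\widetilde{G}]=[M,\widetilde{K}]=[N,\widetilde{H}]$, and then study the action of $\widetilde{H},\widetilde{K}$ on the abelian module $V$. But both load-bearing steps are left open or go wrong. In Step 3 you reduce the first assertion to ``$R$ is abelian'' and then explicitly defer that point (``the step I expect to be the main obstacle''); this is precisely the content of the first assertion, so nothing has been proved. The paper closes it without any three-subgroups bookkeeping: from $[m,h^{f}]=[h,m^{\delta}]$ every generator of $[M,K]$ lies simultaneously in $M^{\widetilde{G}}$ and $N^{\widetilde{G}}$, and $M$ (central in $\widetilde{H}$ and commuting with $N$) centralizes each $[h,m^{\delta}]$, whence $M^{\widetilde{K}}=M[M,K]$ is abelian and then $V=MN[M,K]$ is abelian. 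Some such argument must be supplied.

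In Step 4 the claim $[V,\widetilde{H},\widetilde{H}]=[V,\widetilde{K},\widetilde{K}]=1$ is false: the paper's own example in the subsection on group extensions of $\chi(A)$ (with $\widetilde{A}\cong A_{2,k}$, $M=\langle a_{1}\rangle$) has $H,K$ elementary abelian yet $[b_{1},a_{i},a_{j}]=[a_{i}^{t},a_{j}]^{2}\neq e$, so $[V,\widetilde{A},\widetilde{A}]\neq 1$. What the relations actually give is $[v,h,h]=e$ for a single $h$ (since $h$ centralizes $\langle M,h^{f}\rangle$); polarizing yields $[v,h_{1},h_{2}]=[v,h_{2},h_{1}]^{-1}$ and $[v,h_{1},h_{2}]^{2}=[v,[h_{2},h_{1}]]^{-1}$, and only when $H$ is abelian (so $[h_{2},h_{1}]\in M\leq V$ centralizes the abelian $V$) does one get exponent $2$, not triviality. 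That weaker fact already finishes the proof with no Hall--Witt analysis of mixed commutators: since $V$ is abelian and normal, $[[v,h_{1}]^{2},h_{2}]=[v,h_{1},h_{2}]^{2}=e$, so $\widetilde{H}$ centralizes $[V,\widetilde{H}]^{2}=[V,\widetilde{G}]^{2}$, and symmetrically for $\widetilde{K}$. Finally, note that your Step 2 takes $(mh)^{f^{\ast}}=m^{\alpha}h^{f}$, i.e.\ the special case $\gamma=\alpha$ of the paper's construction, which allows an arbitrary bijection $\gamma:M\rightarrow N$ on the cosets $mh$ with $h\neq e$; this is why the paper's identities involve the twisted maps $\delta$ and $\varepsilon$ rather than $\alpha$ itself, and your argument would need to be adapted to that generality.
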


Although $G\left( H,K;f\right) $ is finite for finite groups, $H,K$, since $%
f $ in general does not behave well with respect to inductive arguments,
methods from finite group theory are difficult to apply. At the present
stage, we have stayed close to the case where the groups $H$ and $K$ are
isomorphic finite abelian groups and more specially to elementary abelian $p$%
-groups $A_{p,k}$ of rank $k$.

A step in the direction of proving nilpotency is

\begin{theorem}
Suppose $A,B$ are finite abelian groups of \ equal order $n$ and let $%
G=G\left( A,B;f\right) $. Then, the metabelian quotient group $\frac{G}{%
G^{\prime \prime }}$ is nilpotent of class at most $n$.
\end{theorem}

The next lemma is a natural first step in classifying the groups $G\left(
H,K;f\right) $ for a fixed pair $\left( H,K\right) $.

\begin{lemma}
Let $a\in Aut\left( H\right) ,b\in Aut\left( K\right) $, and $g=afb$. Then,
the extension $\gamma :G\left( H,K;f\right) \rightarrow G\left( H,K;g\right) 
$ of $h\rightarrow h^{a^{-1}},k\rightarrow k^{b}$ is an isomorphism from $%
G\left( H,K;f\right) \ $onto $G\left( H,K;g\right) $.
\end{lemma}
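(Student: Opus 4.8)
The plan is to view $\gamma$ as the map induced on the presented groups by a suitable automorphism of the free product $H\ast K$, and then to produce an inverse of exactly the same shape. Recall that $G(H,K;f)=(H\ast K)/R_{f}$, where $R_{f}$ is the normal closure in $H\ast K$ of the set $\{[h,h^{f}]:h\in H\}$, and likewise $G(H,K;g)=(H\ast K)/R_{g}$ with $R_{g}$ the normal closure of $\{[x,x^{g}]:x\in H\}$.

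First I would note that, since $a^{-1}\in Aut(H)$ and $b\in Aut(K)$, the assignments $h\mapsto h^{a^{-1}}$ (for $h\in H$) and $k\mapsto k^{b}$ (for $k\in K$) are automorphisms of the two free factors and hence extend uniquely to an automorphism $\widehat{\gamma}$ of $H\ast K$. It then remains to check that $\widehat{\gamma}$ carries $R_{f}$ into $R_{g}$, equivalently that it sends each generating commutator $[h,h^{f}]$ of $R_{f}$ into $R_{g}$. Here is the one computation that matters: for $h\in H$ set $x=h^{a^{-1}}$, so that $h=x^{a}$; using $g=afb$ one gets $(h^{f})^{b}=(x^{af})^{b}=x^{afb}=x^{g}$, hence
\[
\widehat{\gamma}\bigl([h,h^{f}]\bigr)=\bigl[h^{a^{-1}},(h^{f})^{b}\bigr]=[x,x^{g}]\in R_{g}.
\]
Since $h\mapsto h^{a^{-1}}$ is a bijection of $H$, as $h$ runs over $H$ the element $x$ runs over all of $H$; thus $\widehat{\gamma}$ maps the full generating set of $R_{f}$ into $R_{g}$, and being an automorphism it satisfies $\widehat{\gamma}(R_{f})\leq R_{g}$. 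Consequently $\widehat{\gamma}$ descends to a well-defined homomorphism $\gamma:G(H,K;f)\rightarrow G(H,K;g)$ acting as stated on $H$ and on $K$.

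To finish, I would apply the construction a second time with the roles of $f$ and $g$ interchanged. From $g=afb$ we have $f=a^{-1}gb^{-1}$ with $a^{-1}\in Aut(H)$ and $b^{-1}\in Aut(K)$; the previous paragraph, applied to the triple $(a^{-1},b^{-1},f=a^{-1}gb^{-1})$, yields a homomorphism $\gamma^{\prime}:G(H,K;g)\rightarrow G(H,K;f)$ with $h\mapsto h^{a}$ and $k\mapsto k^{b^{-1}}$. Then $\gamma^{\prime}\gamma$ fixes every element of $H$ (since $h\mapsto h^{a^{-1}}\mapsto h$) and of $K$, and similarly $\gamma\gamma^{\prime}$ fixes $H$ and $K$; as $H\cup K$ generates the groups in question, $\gamma^{\prime}\gamma$ and $\gamma\gamma^{\prime}$ are the identity maps, so $\gamma$ is an isomorphism.

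There is no serious obstacle here: the content is entirely the relator computation above together with the bijectivity of $h\mapsto h^{a^{-1}}$, which is precisely what upgrades ``$\gamma$ is a homomorphism'' to ``$\gamma$ is an isomorphism'' once combined with the symmetric construction of $\gamma^{\prime}$. The only points requiring care are the bookkeeping of the composition convention in $g=afb$ (so that $(h^{f})^{b}$ really equals $x^{g}$) and the standard fact that automorphisms of the free factors assemble into an automorphism of the free product.
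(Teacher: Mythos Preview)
Your argument is correct: the free-product automorphism descends because the relator computation $[h,h^{f}]\mapsto[x,x^{g}]$ with $x=h^{a^{-1}}$ works exactly as you wrote, and the symmetric construction of $\gamma'$ furnishes the inverse. The paper actually states this lemma without proof, treating it as routine; your write-up supplies precisely the standard verification one would expect, so there is nothing to compare against.
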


Let $H$ and $K$ be isomorphic groups by $t:H\rightarrow K$. Then in the
above lemma,%
\begin{equation*}
f=f^{\prime }t,g=g^{\prime }t,b=t^{-1}b^{\prime }t
\end{equation*}%
where $f^{\prime },g^{\prime }\in Sym\left( H^{\#}\right) $ and $b^{\prime
}\in Aut\left( H\right) $; here $H^{\#}$ denotes $H\backslash \left\{
e\right\} $ Therefore, $g^{\prime }=af^{\prime }b^{\prime }$ is an element
of the double coset $Aut\left( H\right) f^{\prime }Aut\left( H\right) $.
Thus, in order to classify $G\left( H;f\right) $ one is obliged to determine
the double coset decomposition $Aut\left( H\right) \backslash Sym\left(
H^{\#}\right) /Aut\left( H\right) $. When the context is clear, we refer to $%
f$ simply by its factor $f^{\prime }$.

Computations by GAP \cite{gap06} produce the following data for abelian
groups of small rank: \newline
for $A_{2,3}$, $SL(3,2)\backslash Sym\left( 7\right) /SL(3,2)$ has $4$
double cosets;\newline
for $A_{2,4}$, $SL(4,2)\backslash Sym\left( 15\right) /SL(4,2)$ has $3374$
double cosets;\newline
for $A_{3,3}$, $PGL\left( 3,3\right) \backslash Sym\left( 13\right)
/PGL\left( 3,3\right) $ has $252$ double cosets.

The groups $G\left( A_{p,k};f\right) $ have the following nilpotency classes
and derived lengths.

\begin{theorem}
(i) Let $A=A_{2,k}$. If $k=3,4$ then $G\left( A;f\right) $ is a $2$-group of
order at most $2^{2^{k}+k-1}$, has class at most $5$ and derived length at
most $3$;\newline
(ii) Let $A=A_{3,3}$. Then $G\left( A;f\right) $ is a $3$-group of order at
most $3^{9}$ and has nilpotency class at most $2$.\newline
\end{theorem}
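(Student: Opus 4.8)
The statement is a finite verification, and the plan is to reduce it to one and run it in GAP. By the Lemma above (stating that $G(H,K;f)\cong G(H,K;afb)$ for $a\in Aut(H)$, $b\in Aut(K)$), applied with $H=K=A$ so that $a,b$ range over $Aut(A)$, the isomorphism type of $G(A;f)$ depends only on the double coset $Aut(A)\,f\,Aut(A)$ in $Sym(A^{\#})$; the discussion preceding the statement records that, computed with Hulpke's double-coset routines, there are exactly $4$, $3374$, and $252$ such double cosets for $A_{2,3}$, $A_{2,4}$, and $A_{3,3}$ respectively. By Theorem~1, taking $a=f$ and $b=\mathrm{id}_{A}$ so that the defining relations $hh^{f}=h^{f}h$ become $hh^{f}=h^{a}h^{b}$, every $G(A;f)$ is finite. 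Hence it is enough to choose one bijection $f$ from each double coset and to analyse the resulting finite group.

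For a chosen $f$ one forms the presentation
\[
G(A;f)=\left\langle A,\,B \ \mid\ [w,w^{f}]=1\ \text{ for all }\ w\in A^{\#}\right\rangle
\]
(with $B$ a second copy of $A$, and each $w,w^{f}$ written as a word in fixed generators), runs a coset enumeration over the trivial subgroup — which terminates by Theorem~1 and yields both the order of $G(A;f)$ and a faithful permutation representation of it — and then reads off the nilpotency class from the lower central series and the derived length from the derived series. Taking the maxima of these three invariants over the double cosets gives the stated bounds: class $\le 5$, derived length $\le 3$, and order $\le 2^{2^{k}+k-1}$ for $A=A_{2,k}$ with $k=3,4$; and class $\le 2$ and order $\le 3^{9}$ for $A=A_{3,3}$. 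That each $G(A;f)$ is a $p$-group is then automatic: it is generated by the two $p$-subgroups $A$ and $B$, and a finite nilpotent group generated by $p$-subgroups is a $p$-group, so once nilpotence is confirmed $|G(A;f)|$ is a power of $p$. One may also use Theorem~3 as an independent check, since it forces the class of $G/G^{\prime\prime}$ to be at most $|A|$.

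The difficulty is entirely computational, and is concentrated in the case $A_{2,4}$. On the one hand, $Sym(15)$ has more than $10^{12}$ elements, so its $3374$ double cosets cannot be obtained by brute force and a dedicated double-coset algorithm is needed. On the other — and this is the real obstacle — Theorem~1 only bounds $|G(A;f)|$ by $16\exp(15)$, which dwarfs the true bound $2^{19}$, so a blind coset enumeration over the trivial subgroup may not finish in practice before one knows the group is small. The workable route is to run the $p$-quotient algorithm first (until the class stabilises), obtaining for each of the $3374$ presentations a finite $p$-group $Q_{f}$, and then to certify for every $f$ that the natural epimorphism $G(A;f)\to Q_{f}$ is an isomorphism — for instance by a coset enumeration over the corresponding kernel, or by a direct enumeration now that the target order $\le 2^{19}$ is known and the coset table therefore of manageable size. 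By comparison the cases $A_{2,3}$ ($4$ groups of order $\le 2^{10}$) and $A_{3,3}$ ($252$ groups of order $\le 3^{9}$ and class $2$) present no practical difficulty.
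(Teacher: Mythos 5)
Your overall strategy --- reduce to double-coset representatives via the Lemma, verify each representative by machine, and read off order, class and derived length --- is exactly the paper's, and for part (i) it is complete: for $p=2$ the nonzero vectors of $A_{2,k}$ are in bijection with $A_{2,k}^{\#}$, so the relevant double cosets really are those of $SL(3,2)$ in $Sym(7)$ (there are $4$) and of $SL(4,2)$ in $Sym(15)$ (there are $3374$), and your remarks about finiteness via Theorem~1 and about certifying the $p$-quotient are sensible implementation details.

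There is, however, a genuine gap in part (ii). The Lemma reduces the classification of $G(A_{3,3};f)$ only to double cosets of $Aut(A_{3,3})=GL(3,3)$ in $Sym\left(A_{3,3}^{\#}\right)=Sym(26)$, and there are on the order of $26!/|GL(3,3)|^{2}\approx 10^{18}$ of these --- not $252$, and far beyond any exhaustive computation. The figure $252$ quoted in the introduction counts double cosets of $PGL(3,3)$ in $Sym(13)$, i.e.\ permutations of the $13$ one-dimensional subspaces, which parametrize only those bijections $g$ satisfying $\left(a^{i}\right)^{g}=\left(a^{g}\right)^{i}$. The missing ingredient is the paper's section on permuting the set of cyclic subgroups: from the relation $R$ attached to an arbitrary $f$ one builds a $(p-1)$-regular multigraph on the cyclic subgroups, and the doubly-stochastic incidence-matrix argument there extracts a permutation $g$ of $A_{3,3}^{\#}$ that is linear on each cyclic subgroup and such that $G\left(A_{3,3};f\right)$ is a quotient of $G\left(A_{3,3};g\right)$. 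Since order and nilpotency class can only decrease in quotients, it then suffices to check the $252$ representatives in $PGL(3,3)\backslash Sym(13)/PGL(3,3)$, which is what the paper does. Without this reduction your plan for $A_{3,3}$ cannot be carried out.
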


We discuss a number of other issues. These include the reduction of the
number of relations in $G\left( A;f\right) $, finding a bijection equivalent
to $f$ which is "closer" to being an isomorphism and also concerning $f$
inducing a bijection between the nontrivial cyclic subgroups of $A$.

The paper ends with three general examples. The first is $G\left( A,f\right) 
$ where $A$ is a field, seen as an additive group, and where $f$ corresponds
to the multiplicative inverse. The second example illustrates the
construction of extensions of $G=G\left( A,f\right) $ which are of the same
type as $G$; this produces metabelian $2$-groups having the same order $%
2^{2^{k}+k-1}$ and nilpotency class $k$ as $\chi \left( A_{2,k}\right) $,
but not isomorphic to the latter group. The third is $G=G\left(
A_{2,k},f\right) $ where $f$ corresponds to a transposition of $A_{2,k}^{\#}$%
.

\section{Extensions of groups}

Let $\widetilde{H},\widetilde{K}$ be groups having normal subgroups $M,N$
and let $H$ be a transversal of $M$ in $\widetilde{H}$ with $e\in H$.
Similarly, let $K$ be a transversal of $N$ in $\widetilde{K}$ with $e\in K$.
We identify $H$ with the quotient group $\frac{\widetilde{H}}{M}$ and $K$
with the quotient group $\frac{\widetilde{K}}{N}$. Let $f:H\rightarrow K$, $%
\alpha :M\rightarrow N$ be bijections both fixing $e$. Given a bijection $%
\gamma :M\rightarrow N$ (not necessarily fixing $e$), define $f^{\ast }:%
\widetilde{H}\rightarrow \widetilde{K}$ by 
\begin{equation*}
f^{\ast }:m\rightarrow m^{\alpha },f^{\ast }:mh\rightarrow m^{\gamma }h^{f}%
\text{ if }h\not=e\text{.}
\end{equation*}%
Then $f^{\ast }$ is a bijection which fixes $e$ and $G=G\left( H,K;f\right) $
is an epimorphic image of $\widetilde{G}=G\left( \widetilde{H},\widetilde{K}%
;f^{\ast }\right) $. The natural epimorphisms $\widetilde{H}\rightarrow H,%
\widetilde{K}\rightarrow K$ extend to an epimorphism $\widetilde{G}%
\rightarrow G$ having for kernel $V$, the normal closure of $\left\langle
M,N\right\rangle $ in $\widetilde{G}$.

For any group $L$ let $\nu _{L}$ denote the exponent of $L$ and $\gamma
_{i}\left( L\right) $ the $i$th term of the lower central series of $L$.

Define $\delta :M\rightarrow N,$ $\varepsilon :M\rightarrow M$ by%
\begin{equation*}
m^{\delta }=m^{\alpha }\left( \left( mm^{\alpha \gamma ^{-1}}\right)
^{\gamma }\right) ^{-1},\text{ }m^{\varepsilon }=m\left( \left( m^{\alpha
}m^{\gamma }\right) ^{\gamma ^{-1}}\right) ^{-1}\text{.}
\end{equation*}%
Clearly, $m^{\delta }=e$ if and only if $m=e$ and similarly, $m^{\varepsilon
}=e$ if and only if $m=e$.

\begin{theorem}
Maintain the previous notation. Suppose $M,N$\textit{\ are central subgroups
of }$\widetilde{H},\widetilde{K}$, respectively and that $G\left( M,N;\alpha
\right) $ is abelian. Then, $V$ is an abelian group such that\textit{\ }%
\begin{eqnarray*}
V &=&M+N+\left[ V,\widetilde{G}\right] \text{,} \\
\left[ V,\widetilde{G}\right] &=&\left[ M^{\delta },H\right] =\left[
N^{\varepsilon },H\right] \text{,} \\
\left[ V,\gamma _{i}\left( \widetilde{H}\right) \right] &=&\left[ V,i%
\widetilde{H}\right] ^{2^{i-1}}
\end{eqnarray*}%
for all $i\geq 1$ and 
\begin{equation*}
\nu _{\left[ V,\widetilde{G}\right] }\mid \gcd \left( \nu _{M},\nu _{N},\nu
_{H},\nu _{K}\right) \text{.}
\end{equation*}
Both sets%
\begin{eqnarray*}
&&\left\{ \left( m_{1}^{\delta }\right) ^{-1}\left( m_{2}^{\delta }\right)
^{-1}\left( m_{1}m_{2}\right) ^{\delta }\mid m_{1},m_{2}\in M\right\} , \\
&&\left\{ m^{-1}\left( \left( m^{\delta }\right) ^{\alpha ^{-1}}\right)
^{\varepsilon }\mid m\in M\right\}
\end{eqnarray*}%
are central in $\widetilde{G}$. Furthermore, if $H,K$ are abelian then $%
\left[ V,\widetilde{G}\right] ^{2}$ is central in $\widetilde{G}$.
\end{theorem}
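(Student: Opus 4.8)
The plan is to work inside $\widetilde{G}=G(\widetilde{H},\widetilde{K};f^{\ast})$ and exploit the two defining families of relations, namely $\widetilde{h}\,\widetilde{h}^{\,f^{\ast}}=\widetilde{h}^{\,f^{\ast}}\widetilde{h}$ for $\widetilde{h}\in\widetilde{H}$, written out on the transversal decomposition $\widetilde{h}=mh$, together with the centrality of $M$ in $\widetilde{H}$, of $N$ in $\widetilde{K}$, and the abelianness of $G(M,N;\alpha)$. First I would record that $V$, being the normal closure of $\langle M,N\rangle$, is generated modulo $[V,\widetilde{G}]$ by the images of $M$ and $N$; the hypothesis that $G(M,N;\alpha)$ is abelian forces $M$ and $N$ to commute with each other inside $\widetilde{G}$ (since the subgroup they generate is a quotient of $G(M,N;\alpha)$), and since each is abelian on its own, $V/[V,\widetilde{G}]$ is abelian and $V=M+N+[V,\widetilde{G}]$. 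To upgrade this to ``$V$ is abelian'' I would show $[V,\widetilde{G}]$ is central in $\widetilde{G}$, which is really the technical heart of the theorem; the auxiliary maps $\delta,\varepsilon$ are introduced precisely to capture the ``error term'' by which $f^{\ast}$ fails to respect the product $m\mapsto mh$, and I expect the relation $mh\cdot(mh)^{f^{\ast}}=(mh)^{f^{\ast}}\cdot mh$ to rearrange, using centrality of $M,N$, into an identity of the shape $[\,m^{\delta},h\,]=[\,n^{\varepsilon},h\,]$ after substituting $m^{\gamma}$ in terms of $m^{\delta}$ and $m^{\varepsilon}$.

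The key computational step is thus to expand the single family of relations. Writing $\widetilde{h}=mh$ with $h\neq e$, so $\widetilde{h}^{\,f^{\ast}}=m^{\gamma}h^{f}$, the relation $mh\cdot m^{\gamma}h^{f}=m^{\gamma}h^{f}\cdot mh$ becomes, after pulling the central elements $m,m^{\gamma}$ to one side in $\widetilde{H}$ and $h^{f}$ being central mod $N$ only after quotienting, an equation relating $[h,h^{f}]$ (which is trivial in $G(H,K;f)$ but not in $\widetilde{G}$) to commutators of the $M$- and $N$-parts with $h$. Comparing this with the relation coming from $h$ alone (i.e. $m=e$: $h\,h^{f}=h^{f}h$) isolates a commutator identity; the definitions of $\delta$ and $\varepsilon$ are exactly what is needed so that the surviving term is $[m^{\delta},h]$ on one side and $[n^{\varepsilon},h]$ on the other, where $n=m^{\alpha}$. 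This yields $[V,\widetilde{G}]=[M^{\delta},H]=[N^{\varepsilon},H]$ once one checks that commutators with $K$ reduce to commutators with $H$ (using that $N$ is central in $\widetilde{K}$, conjugation by $K$ on $V$ factors appropriately) and that $[M,H]$ and $[N,H]$ already lie in $[M^{\delta},H]+[N^{\varepsilon},H]$ modulo deeper terms — an induction down the lower central series. The exponent statement $\nu_{[V,\widetilde{G}]}\mid\gcd(\nu_M,\nu_N,\nu_H,\nu_K)$ should then follow because each generator $[m^{\delta},h]$ is killed both by $\nu_M$ (raising $m^{\delta}$ to that power) once $[V,\widetilde{G}]$ is known central, and by $\nu_H$ (raising $h$), and symmetrically for $N,K$.

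For the iterated identity $[V,\gamma_i(\widetilde{H})]=[V,i\widetilde{H}]^{2^{i-1}}$ I would argue by induction on $i$, the case $i=1$ being trivial. The mechanism is a ``squaring'' phenomenon: because $[V,\widetilde{G}]$ is central, for $v\in V$ and $x,y\in\widetilde{H}$ the Hall–Witt / commutator-collection identities collapse so that $[v,[x,y]]$ equals $[v,x,y]\,[v,y,x]^{-1}$-type expressions, and the central, abelian, exponent-controlled structure makes $[v,x,y]$ and $[v,y,x]$ inverse-related up to a square, producing the factor $2$ at each step and hence $2^{i-1}$ after $i-1$ steps. Concretely I expect $[V,\gamma_{i+1}(\widetilde{H})]=[[V,\gamma_i(\widetilde{H})],\widetilde{H}]$ and, substituting the inductive value $[V,i\widetilde{H}]^{2^{i-1}}$ and commuting the exponent out (legal since the group is abelian of bounded exponent and central), one more bracketing doubles the exponent.

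The final two bullet points — that $\{(m_1^{\delta})^{-1}(m_2^{\delta})^{-1}(m_1m_2)^{\delta}\}$ and $\{m^{-1}((m^{\delta})^{\alpha^{-1}})^{\varepsilon}\}$ are central, and that $[V,\widetilde{G}]^{2}$ is central when $H,K$ are abelian — I would obtain as follows. The first set measures the failure of $\delta$ to be a homomorphism; feeding $m_1m_2$ into the relation analyzed above and comparing with the product of the individual relations shows this discrepancy commutes with all of $\widetilde{H}$ (it lies in the center because it arises as a difference of two expressions for the same element of $[V,\widetilde{G}]$, which is already central). The second set similarly measures the compatibility of $\delta$ and $\varepsilon$ via $\alpha$, and the substitution $m^{\gamma}$-in-terms-of-$\delta$-and-$\varepsilon$ makes $m^{-1}((m^{\delta})^{\alpha^{-1}})^{\varepsilon}$ a central word by the same ``two descriptions of one element'' reasoning. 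For the last claim, when $H,K$ are abelian the generators of $[V,\widetilde{G}]$ are of the form $[m^{\delta},h]$, and I would show $[m^{\delta},h]^{2}=[m^{\delta},h^{2}]\cdot(\text{central correction})$ and then that $[V,\widetilde{G}]^{2}$ commutes with $\widetilde{H}$ using $[[m^{\delta},h],h']=[m^{\delta},[h,h']]\cdots$ which, $H$ being abelian, forces the doubly-nested commutators to be squares lying already in the center by the previous parts. The main obstacle throughout is the first bookkeeping step: getting the relation $mh\cdot(mh)^{f^{\ast}}=(mh)^{f^{\ast}}\cdot mh$ to collapse cleanly into the $\delta,\varepsilon$ identities requires carefully tracking which elements are central in $\widetilde{H}$ versus $\widetilde{K}$ versus $\widetilde{G}$, and it is here that the abelianness of $G(M,N;\alpha)$ must be invoked at exactly the right moment.
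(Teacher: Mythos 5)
Your opening moves are on target: the identities $\left[ m,h^{f}\right] =\left[ h,m^{\delta }\right] $ and $\left[ h,m^{\alpha }\right] =\left[ m^{\varepsilon },h^{f}\right] $, obtained by inserting the relator of the auxiliary element $m^{\alpha \gamma ^{-1}}h$ (resp.\ $\left( m^{\alpha }m^{\gamma }\right) ^{\gamma ^{-1}}h$) into $\left[ m,h^{f}\right] $ and using centrality of $M$ in $\widetilde{H}$, of $N$ in $\widetilde{K}$, and $\left[ M,N\right] =e$, are exactly the ``error-term'' identities you describe; they do yield $\left[ M,K\right] =\left[ H,M^{\delta }\right] =\left[ H,N\right] =\left[ M^{\varepsilon },K\right] $, the first central set (by comparing $\left[ m_{1}m_{2},h^{f}\right] $ with $\left[ m_{1},h^{f}\right] \left[ m_{2},h^{f}\right] $), and the second central set (by composing the two identities). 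However, there is a genuine error at the heart of your plan: you propose to prove that $\left[ V,\widetilde{G}\right] $ is central in $\widetilde{G}$ and to derive both ``$V$ abelian'' and the formula $\left[ V,\gamma _{i}( \widetilde{H}) \right] =\left[ V,i\widetilde{H}\right] ^{2^{i-1}}$ from that centrality. This is false in general: if $\left[ V,\widetilde{G}\right] $ were central, the theorem's final assertion (that $\left[ V,\widetilde{G}\right] ^{2}$ is central \emph{when $H,K$ are abelian}) would be vacuous and the displayed formula would degenerate to $e=e$ for $i\geq 2$; the class-$k$ extensions of $\chi \left( A_{2,k-1}\right) $ constructed later in the paper show that $\left[ v,h_{1},h_{2}\right] $ is genuinely nontrivial. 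Abelianness of $V$ must instead be proved directly: since $\left[ m,h^{f}\right] =\left[ h,m^{\delta }\right] $, every element of $M\cup N$ commutes with $\left[ M,K\right] =\left[ H,N\right] $, so $M^{\widetilde{K}}$ and $N^{\widetilde{H}}$ are abelian and hence so is $V=\left\langle M,N\right\rangle ^{\widetilde{G}}$.

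The second, related gap concerns the $2^{i-1}$ formula and the exponent bound. The engine there is not a Hall--Witt collapse but the specific identity $\left[ v,h,h\right] =e$ for all $v\in V$, $h\in \widetilde{H}$: one first gets $\left[ m^{\delta },h,h\right] =e$ because $h$ centralizes both $m$ and $h^{f}$, and then a separate computation (expanding $\left[ m^{\delta },h_{1}h_{2},h_{1}h_{2}\right] $) is needed to extend this to all of $\left[ V,\widetilde{G}\right] $. Written additively this says $v\left( -1+h\right) ^{2}=0$, i.e.\ $h^{2}=2h-1$ as an operator on $V$, from which $-1+\left[ h_{2},h_{1}\right] =-2\left( -1+h_{1}\right) \left( -1+h_{2}\right) $ and, by iteration, the factor $\left( -2\right) ^{i-1}$; it also gives $\left[ m^{\delta },h^{i}\right] =\left[ m^{\delta },h\right] ^{i}$, which is what makes $\nu _{H}$ and $\nu _{K}$ (and not only $\nu _{M},\nu _{N}$) kill $\left[ V,\widetilde{G}\right] $. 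Your sketch never isolates $\left[ v,h,h\right] =e$, and without it neither the doubling nor the exponent divisibility follows. Once it is in place, the final claim is immediate from the $i=2$ case: $\left[ v,\left[ h_{1},h_{2}\right] \right] =\left[ v,h_{1},h_{2}\right] ^{2}=\left[ \left[ v,h_{1}\right] ^{2},h_{2}\right] $, and when $H,K$ are abelian $\left[ h_{1},h_{2}\right] \in M\leq V$ commutes with $v$, so $\left[ V,\widetilde{G}\right] ^{2}$ is central.
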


\begin{proof}
(1) Let $m\not=e\not=h$. Then, 
\begin{eqnarray*}
\left[ m,h^{f}\right] &=&\left[ m,m^{\alpha }h^{f}\right] \\
&=&\left[ \left( m^{\alpha \gamma ^{-1}}h\right) m,m^{\alpha }h^{f}\right] \\
&=&\left[ \left( mm^{\alpha \gamma ^{-1}}\right) h,m^{\alpha }h^{f}\right] \\
&=&\left[ \left( mm^{\alpha \gamma ^{-1}}\right) h,\left( \left( mm^{\alpha
\gamma ^{-1}}\right) ^{\gamma }h^{f}\right) ^{-1}m^{\alpha }h^{f}\right] \\
&=&\left[ \left( mm^{\alpha \gamma ^{-1}}\right) h,m^{\delta }\right] =\left[
h,m^{\delta }\right] \text{.}
\end{eqnarray*}

In the same manner,%
\begin{eqnarray*}
\left[ h,m^{\alpha }\right] &=&\left[ mh,m^{\alpha }\right] =\left[
mh,m^{\gamma }h^{f}.m^{\alpha }\right] \\
&=&\left[ mh,m^{\alpha }m^{\gamma }h^{f}\right] \\
&=&\left[ \left( \left( m^{\alpha }m^{\gamma }\right) ^{\gamma
^{-1}}h\right) ^{-1}mh,m^{\alpha }m^{\gamma }h^{f}\right] \\
&=&\left[ m^{\varepsilon },m^{\alpha }m^{\gamma }h^{f}\right] =\left[
m^{\varepsilon },h^{f}\right] \text{.}
\end{eqnarray*}%
Since $\left[ h,m^{\delta }\right] =\left[ m,h^{f}\right] $ and $m^{\delta
}=\left( m^{\prime }\right) ^{\alpha }$ for $m^{\prime }=\left( m^{\delta
}\right) ^{\alpha ^{-1}}$, we obtain $\left[ \left( m^{\prime }\right)
^{\varepsilon },h^{f}\right] =\left[ h,\left( m^{\prime }\right) ^{\alpha }%
\right] $, 
\begin{eqnarray*}
\left[ \left( m^{\prime }\right) ^{\varepsilon },h^{f}\right] &=&\left[
h,\left( m^{\prime }\right) ^{\alpha }\right] , \\
\left[ \left( \left( m^{\delta }\right) ^{\alpha ^{-1}}\right) ^{\varepsilon
},h^{f}\right] &=&\left[ h,m^{\delta }\right] =\left[ m,h^{f}\right] \text{;}
\end{eqnarray*}%
that is, $m^{-1}\left( \left( m^{\delta }\right) ^{\alpha ^{-1}}\right)
^{\varepsilon }$ commutes with $K$ and is therefore central in $\widetilde{G}
$.

(2) We obtain from $\left[ m,h^{f}\right] =\left[ h,m^{\delta }\right] $
that the sets $\left[ m,K\right] ,\left[ H,m^{\delta }\right] $ are equal
and $H,K$ normalize the subgroup $\left\langle \left[ m,K\right]
\right\rangle $. Also, any $m^{\prime }\in M$ commutes with $\left[
h,m^{\delta }\right] \left( =\left[ m,h^{f}\right] \right) \in \left[ M,K%
\right] $. Therefore, $M^{K}\left( =M^{\widetilde{K}}\right) $ is abelian.

Since $\left[ M,K\right] \leq \left[ H,M^{\delta }\right] \leq \left[ N,H%
\right] $ and $\left[ H,N\right] \leq \left[ M^{\varepsilon },K\right] \leq %
\left[ M,K\right] $ we get 
\begin{equation*}
\left[ M,K\right] =\left[ H,M^{\delta }\right] =\left[ H,N\right] =\left[
M^{\varepsilon },K\right] \text{.}
\end{equation*}%
Therefore $V$ is abelian and 
\begin{eqnarray*}
V &=&M+N+\left[ M^{\delta },H\right] \text{,} \\
\left[ V,\widetilde{G}\right] &=&\left[ M^{\delta },H\right] \text{,} \\
\nu _{\left[ V,\widetilde{G}\right] } &\mid &\gcd \left( \nu _{M},\nu
_{N}\right) \text{.}
\end{eqnarray*}

(3) Let $m_{1},m_{2}\in M$. Then, 
\begin{equation*}
\left[ m_{1}m_{2},h^{f}\right] =\left[ h,\left( m_{1}m_{2}\right) ^{\delta }%
\right] \text{,}
\end{equation*}%
\begin{eqnarray*}
\left[ m_{1}m_{2},h^{f}\right] &=&\left[ m_{1},h^{f}\right] ^{m_{2}}\left[
m_{2},h^{f}\right] \\
&=&\left[ m_{1},h^{f}\right] \left[ m_{2},h^{f}\right] \\
&=&\left[ h,\left( m_{1}\right) ^{\delta }\right] \left[ h,\left(
m_{2}\right) ^{\delta }\right] \text{,}
\end{eqnarray*}%
and%
\begin{equation*}
\left[ h,\left( m_{1}m_{2}\right) ^{\delta }\right] =\left[ h,\left(
m_{1}\right) ^{\delta }\right] \left[ h,\left( m_{2}\right) ^{\delta }\right]
\text{.}
\end{equation*}%
Thus, 
\begin{equation*}
\left\{ \left( m_{1}^{\delta }\right) ^{-1}\left( m_{2}^{\delta }\right)
^{-1}\left( m_{1}m_{2}\right) ^{\delta }\mid m_{1},m_{2}\in M\right\}
\end{equation*}%
is central in $\widetilde{G}$ and likewise for 
\begin{equation*}
\left\{ \left( m_{1}^{\varepsilon }\right) ^{-1}\left( m_{2}^{\varepsilon
}\right) ^{-1}\left( m_{1}m_{2}\right) ^{\varepsilon }\mid m_{1},m_{2}\in
M\right\} \text{.}
\end{equation*}

(4) Since any $h\in H$ centralizes $\left\langle M,h^{f}\right\rangle $, we
have for all $m\in M$,%
\begin{equation*}
\left[ m,h^{f},h\right] =e=\left[ m^{\delta },h,h\right]
\end{equation*}%
and so, 
\begin{equation*}
\left[ m^{\delta },h,h\right] =e,\left[ m^{\delta },h^{i}\right] =\left[
m^{\delta },h\right] ^{i}
\end{equation*}%
for all $v\in V$ and all integers $i$. \ Therefore,%
\begin{equation*}
\nu _{\left[ V,\widetilde{G}\right] }\mid \gcd \left( \nu _{N},\nu
_{H}\right) ,\gcd \left( \nu _{M},\nu _{K}\right) \text{.}
\end{equation*}

We calculate for $h_{1},h_{2}\in H$ 
\begin{eqnarray*}
\left[ m^{\delta },\left( h_{1}h_{2}\right) ^{-1}\right] &=&\left[ m^{\delta
},h_{1}h_{2}\right] ^{-1}, \\
\left[ m^{\delta },h_{2}^{-1}h_{1}^{-1}\right] &=&\left[ m^{\delta
},h_{1}h_{2}\right] ^{-1}, \\
\left[ m^{\delta },h_{1}^{-1}\right] \left[ m^{\delta },h_{2}^{-1}\right]
^{h_{1}^{-1}} &=&\left( \left[ m^{\delta },h_{2}\right] \left[ m^{\delta
},h_{1}\right] ^{h_{2}}\right) ^{-1}, \\
\left[ m^{\delta },h_{1}\right] \left[ m^{\delta },h_{2}\right]
^{h_{1}^{-1}} &=&\left[ m^{\delta },h_{2}\right] \left[ m^{\delta },h_{1}%
\right] ^{h_{2}}, \\
\left[ m^{\delta },h_{2}\right] ^{-1}\left[ m^{\delta },h_{2}\right]
^{h_{1}^{-1}} &=&\left[ m^{\delta },h_{1}\right] ^{-1}\left[ m^{\delta
},h_{1}\right] ^{h_{2}}, \\
\left[ m^{\delta },h_{2},h_{1}^{-1}\right] &=&\left[ m^{\delta },h_{1},h_{2}%
\right] \text{.}
\end{eqnarray*}%
Therefore,%
\begin{eqnarray*}
\left[ m^{\delta },h_{1},h_{2}\right] &=&\left[ m^{\delta },h_{2},h_{1}^{-1}%
\right] =\left[ m^{\delta },h_{1}^{-1},h_{2}^{-1}\right] \\
&=&\left[ m^{\delta },h_{1},h_{2}^{-1}\right] ^{-1}, \\
\left[ m^{\delta },h_{1},h_{2}\right] ^{-1} &=&\left[ m^{\delta
},h_{1},h_{2}^{-1}\right] , \\
\left[ m^{\delta },h_{1},h_{2}\right] &=&\left[ m^{\delta },h_{2},h_{1}^{-1}%
\right] =\left[ m^{\delta },h_{2},h_{1}\right] ^{-1}\text{.}
\end{eqnarray*}

Calculate further%
\begin{eqnarray*}
\left[ m^{\delta },h_{1}h_{2}\right] &=&\left[ m^{\delta },h_{2}\right] %
\left[ m^{\delta },h_{1}\right] ^{h_{2}} \\
&=&\left[ m^{\delta },h_{2}\right] \left[ m^{\delta },h_{1}\right] \left[
m^{\delta },h_{1},h_{2}\right] , \\
\left[ m^{\delta },h_{1}h_{2},h_{1}h_{2}\right] &=&\left[ m^{\delta
},h_{2},h_{1}h_{2}\right] \left[ m^{\delta },h_{1},h_{1}h_{2}\right] \\
&&\left[ m^{\delta },h_{1},h_{2},h_{1}h_{2}\right] \\
&=&\left[ m^{\delta },h_{2},h_{1}\right] \left[ m^{\delta },h_{2},h_{1},h_{2}%
\right] \\
&&\left[ m^{\delta },h_{1},h_{2}\right] \\
&&\left[ m^{\delta },h_{1},h_{2},h_{2}\right] \left[ m^{\delta
},h_{1},h_{2},h_{1}\right] .\left[ m^{\delta },h_{1},h_{2},h_{1},h_{2}\right]
\\
&=&\left[ m^{\delta },h_{1},h_{2},h_{1}\right] ^{h_{2}}=e\text{.}
\end{eqnarray*}
We conclude 
\begin{equation*}
\left[ m^{\delta },h_{2},h_{1},h_{1}\right] =e\text{.}
\end{equation*}%
Thus,%
\begin{equation*}
\left[ v,h,h\right] =e
\end{equation*}%
for all $v\in V$.

When $V$ is written additively, the action of $h$ on $V$ can be expressed as%
\begin{equation*}
v\left( -1+h\right) ^{2}=0\text{.}
\end{equation*}%
From this we\ derive the following formulae for the action of $H$ on $V$:

for $h_{1},h_{2},...h_{i}\in H$, 
\begin{eqnarray*}
h_{2}h_{1} &=&-2+2h_{1}+2h_{2}-h_{1}h_{2}, \\
h_{1}^{-1}h_{2}h_{1} &=&-2+2h_{1}+3h_{2}-2h_{1}h_{2}, \\
-1+\left[ h_{2},h_{1}\right] &=&-2\left( -1+h_{1}\right) \left(
-1+h_{2}\right) , \\
-1+\left[ h_{i},...,h_{2},h_{1}\right] &=&\left( -2\right) ^{i-1}\left(
-1+h_{1}\right) \left( -1+h_{2}\right) ...\left( -1+h_{i}\right) \text{.}
\end{eqnarray*}%
In other words,%
\begin{eqnarray*}
\left[ v,\left[ h_{i},...,h_{2},h_{1}\right] \right] &=&\left[
v,h_{1},h_{2},...,h_{i}\right] ^{\left( -2\right) ^{i-1}}, \\
\left[ V,\gamma _{i}\left( \widetilde{H}\right) \right] &=&\left[ V,i%
\widetilde{H}\right] ^{2^{i-1}}\text{.}
\end{eqnarray*}

(5) Suppose $H,K$ abelian. Then $\left[ v,\left[ h_{1},h_{2}\right] \right] =%
\left[ v,h_{1},h_{2}\right] ^{2}=\left[ \left[ v,h_{1}\right] ^{2},h_{2}%
\right] =e$. As $\left[ V,\widetilde{G}\right] =\left[ V,H\right] =\left[ V,K%
\right] $, we conclude that $\left[ V,\widetilde{G}\right] ^{2}$ is central.
\end{proof}

\begin{theorem}
\textit{Suppose in the above, }$\widetilde{H},\widetilde{K}$ are \textit{%
finite groups and let }$M=\left\langle m\right\rangle ,N=\left\langle
n\right\rangle $\textit{\ be cyclic central }subgroups of $\widetilde{H},%
\widetilde{K}$ respectively, each of\ prime order $p$. \textit{Then, }$%
\left\langle M,N\right\rangle ^{\widetilde{G}}$ is an elementary abelian $p$%
-subgroup of rank at most $\left\vert H\right\vert +1$.
\end{theorem}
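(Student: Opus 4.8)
The plan is to specialize the previous theorem and then count dimensions. First I would verify that its hypotheses hold: since $M=\langle m\rangle$ and $N=\langle n\rangle$ have prime order $p$, any bijection $\alpha\colon M\to N$ fixing $e$ carries the generator $m$ to a nontrivial element of $N$, which must itself generate $N$; the defining relation $[m,m^{\alpha}]=e$ of $G(M,N;\alpha)$ then forces $M$ and $N$ to commute, so $G(M,N;\alpha)\cong C_{p}\times C_{p}$ is abelian. Hence the previous theorem applies to $V=\langle M,N\rangle^{\widetilde G}$, and it tells us that $V$ is abelian, that $V=M+N+[V,\widetilde G]$ with $[V,\widetilde G]=[M^{\delta},H]$, and that $\nu_{[V,\widetilde G]}\mid\gcd(\nu_{M},\nu_{N},\nu_{H},\nu_{K})$.

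Next I would read off that $V$ is elementary abelian of exponent dividing $p$. Indeed $\gcd(\nu_{M},\nu_{N},\nu_{H},\nu_{K})$ divides $\gcd(\nu_{M},\nu_{N})=p$, so $[V,\widetilde G]$ has exponent dividing $p$; and $M,N$ have exponent $p$. Since $V$ is abelian with $V=M+N+[V,\widetilde G]$, it follows that $pV=pM+pN+p[V,\widetilde G]=0$, so $V$ is a finite elementary abelian $p$-group, i.e.\ an $\mathbb{F}_{p}$-vector space, and it only remains to bound its dimension.

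For the rank bound I would work inside $[V,\widetilde G]=[M^{\delta},H]$. Because $V$ is abelian and $N\leq V$, conjugation by any $h\in H$ acts on $V$ as an $\mathbb{F}_{p}$-linear automorphism, so $[n^{a},h]$ is an $\mathbb{F}_{p}$-multiple of $[n,h]$ for every integer $a$; as $M^{\delta}\subseteq N=\langle n\rangle$, this shows $[M^{\delta},H]$ lies in the span of $\{[n,h]:h\in H\}$. Since $e\in H$ and $[n,e]=e$, at most $|H|-1$ of these spanning vectors are nonzero, whence $\dim_{\mathbb{F}_{p}}[V,\widetilde G]\leq|H|-1$. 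Combining this with $V=M+N+[V,\widetilde G]$ together with $\dim_{\mathbb{F}_{p}}M\leq 1$ and $\dim_{\mathbb{F}_{p}}N\leq 1$ gives $\dim_{\mathbb{F}_{p}}V\leq 1+1+(|H|-1)=|H|+1$, as asserted.

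The only genuinely delicate point is obtaining the sharp constant: a naive count of $V=M+N+[M^{\delta},H]$ only yields rank at most $|H|+2$, and shaving off the extra unit relies on the bookkeeping fact that the identity lies in the transversal $H$, so one of the $|H|$ candidate generators $[n,h]$ of $[V,\widetilde G]$ is automatically trivial. Everything else is a direct specialization of the previous theorem, using that a group of prime order has no proper nontrivial subgroups.
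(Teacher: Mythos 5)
Your proposal is correct and follows essentially the same route as the paper: both arguments invoke the preceding theorem to see that $V=\left\langle M,N\right\rangle ^{\widetilde{G}}$ is abelian of exponent $p$, and then bound the rank by exhibiting at most $\left\vert H\right\vert +1$ generators. The only cosmetic difference is the choice of generating set --- the paper writes $V=\left\langle m^{K},n\right\rangle$ (at most $\left\vert K\right\vert$ conjugates of $m$ plus $n$), while you write $V=M+N+\left[ N,H\right]$ and discard the trivial commutator $\left[ n,e\right]$; the two counts coincide since $m^{k}=m\left[ m,k\right]$ and $\left[ M,K\right] =\left[ H,N\right]$.
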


\begin{proof}
We have $\left[ M,N\right] =\left\{ e\right\} $, $\left[ M,K\right] =\left[
H,N\right] $ elementary $p$-abelian subgroup and $M,N$ centralize $\left[ M,K%
\right] $. Therefore,%
\begin{equation*}
\left\langle M,N\right\rangle ^{\widetilde{G}}=\left\langle M,N\right\rangle
^{G}=\left\langle m^{K},n\right\rangle =\left\langle m,n^{H}\right\rangle
\end{equation*}%
is an elementary abelian $p$-subgroup of rank at most $\left\vert
K\right\vert +1$.
\end{proof}

\section{Metabelian Quotients of $G\left( A,B,f\right) $}

It would be interesting to resolve the question of nilpotency of the
solvable quotients of $G\left( H,K;f\right) $. In the next result we
consider metabelian quotients of $G\left( A,B;f\right) $.

\begin{theorem}
Suppose $A,B$ are finite abelian groups of \ equal order $n$ and let $%
G=G\left( A,B;f\right) $. Then, the metabelian quotient group $\frac{G}{%
G^{\prime \prime }}$ is nilpotent of class at most $n$.
\end{theorem}

\begin{proof}
In a metabelian group $M$, if $u\in M^{\prime }$ and $x_{i}\in M$ $\left(
1\leq i\leq k\right) $ then%
\begin{equation*}
\left[ u,x_{1},x_{2},...,x_{k}\right] =\left[
u,x_{i_{1}},x_{i_{2}},...,x_{i_{k}}\right]
\end{equation*}%
for any permutation of $\left( i_{1},i_{2}...,i_{k}\right) $ of $\left\{
1,2,...,k\right\} $.

We have from the relations of $G,$ 
\begin{eqnarray*}
\left[ a,b\right] &=&\left[ a,\left( a^{f}\right) ^{-1}b\right] \\
&=&\left[ a.\left( b^{f^{-1}}\right) ^{-1},b\right] \text{.}
\end{eqnarray*}%
Let $a\in A,b_{1},b_{2}\in B$. As $a^{f}$ and $b_{1}^{f^{-1}}$commute with
both $a,b_{1}$, it follows that 
\begin{eqnarray*}
\left[ a,b_{1},b_{2}\right] &=&\left[ a,b_{1},\left( a^{f}\right) ^{-1}.b_{2}%
\right] \text{ } \\
\left[ a,b_{1},b_{2}\right] &=&\left[ a.\left( b_{1}^{f^{-1}}\right)
^{-1},b_{1},b_{2}\right] \\
&=&\left[ a,b_{1},\left( a.\left( b_{1}^{f^{-1}}\right) ^{-1}\right)
^{f}b_{2}\right] \text{.}
\end{eqnarray*}%
We observe that if $b_{1}=b_{2}\not=a^{f}$, then $b_{1}\not=\left( a.\left(
b_{1}^{f^{-1}}\right) ^{-1}\right) ^{f}b_{2}$.

Now, we will work in $G$ modulo $G^{\prime \prime }$.

Let $a\in A$, $b_{i}$ $\left( 2\leq i\leq k\right) \in B$. From Witt%
\'{}%
s formula, as $A,B$ are abelian, we have%
\begin{equation*}
\left[ a,b_{2},b_{1}\right] =\left[ a,b_{1},b_{2}\right]
\end{equation*}%
and more generally,

\begin{equation*}
\left[ a,b_{1},b_{2},...,b_{k}\right] =\left[
a,b_{i_{1}},b_{i_{2}},...,b_{i_{k}}\right]
\end{equation*}%
for any permutation of $\left( i_{1},,i_{2}...,i_{k}\right) $ of $\left\{
1,2,...,k\right\} $.

Therefore, if 
\begin{equation*}
\left\{ x_{2},...,x_{k}\right\} =\left\{
b_{2},...,b_{s},a_{s+1},...,a_{k}\right\}
\end{equation*}%
with $b_{2},...,b_{s}\in B$ and $a_{s+1},...,a_{s+k}\in A$ then

\begin{equation*}
\left[ a,b_{1},x_{2},...,x_{k}\right] =\left[
a,b_{1},b_{2},...,b_{s},a_{s+1},...,a_{k}\right] \text{. }
\end{equation*}

Let again $a\in A$, $b_{i}$ $\left( 2\leq i\leq k\right) \in B$. Suppose
that $b_{1},b_{2},..,b_{k-1},a^{f}$ are distinct.

Then, 
\begin{equation*}
b_{k},\left( a.\left( b_{1}^{f^{-1}}\right) ^{-1}\right)
^{f}b_{k},,...,\left( a.\left( b_{k-1}^{f^{-1}}\right) ^{-1}\right) ^{f}b_{k}
\end{equation*}

are $k$ distinct elements of $B$.

Suppose further that $b_{k}=b_{j}$ for some $1\leq j\leq k-1$. Then, for
some $i$,%
\begin{equation*}
b_{ij}^{\prime }=\left( a.\left( b_{i}^{f^{-1}}\right) ^{-1}\right)
^{f}b_{j}\not\in \left\{ b_{1},b_{2},..,b_{k-1}\right\} \text{.}
\end{equation*}

In this manner, we have

\begin{eqnarray*}
\left[ a,b_{1},b_{2},..,b_{k-1},b_{k}\right] &=&\left[
a,b_{1},b_{2},..,b_{k-1},b_{j}\right] \\
&=&\left[ a,b_{i},b_{j},b_{l},..,b_{m}\right] \\
&=&\left[ a,b_{i},b_{ij}^{\prime },b_{l},..,b_{m}\right] \\
&=&\left[ a,b_{1},b_{2},..,b_{k-1},b_{ij}^{\prime }\right]
\end{eqnarray*}%
and $b_{1},b_{2},..,b_{ij}^{\prime }$ are distinct. If $k=n$ then $\left[
a,b_{1},b_{2},..,b_{n}\right] =e$ and $G$ is nilpotent of class at most $n$.
\end{proof}

The limit $n$ obtained in the proof is clearly too large, especially when
compared with available results. Determining the nilpotency degree seems to
stem from a more general problem which can be formulated for commutative
rings.

\begin{problem}
Let $A$ be a free abelian group of rank $m$ generated by $a_{1},...,a_{m}$
and let $A$ act on a torsion-free $\mathbb{Z}$-module $V$. Let $n$ be a
natural number. Define 
\begin{equation*}
S\left( m,n\right) =\left\{ 
\begin{array}{c}
a_{i_{1}}^{l_{1}}a_{i_{2}}^{l_{2}}...a_{i_{s}}^{l_{s}}a_{i_{s}+1}\mid 0\leq
s\leq m-1, \\ 
1\leq i_{1}<i_{2}<...<i_{s}<m, \\ 
1\leq l_{i}\leq n-1%
\end{array}%
\right\} \text{.}
\end{equation*}%
This set corresponds to a choice of a generator for each of the different
cyclic subgroups of order $n$ in $\frac{A}{A^{n}}$.For instance, $S\left(
2,3\right) =\left\{ a_{1},a_{2},a_{1}a_{2},a_{1}^{2}a_{2}\right\} $.Let $f$
be a permutation of $S\left( m,n\right) $ and define 
\begin{equation*}
U\left( m,n;f\right) =\left\{ (1-x)\left( 1-x^{f}\right) \mid \text{ }x\in
S\left( m,n\right) \right\} \text{.}
\end{equation*}%
Suppose that $A$ acts on $V$ such that $U\left( m,n;f\right) =\left\{
0\right\} $. Prove that the action of $A$ on $V$ is nilpotent. Moreover,
that it has nilpotency degree at most $3$ for $n=2$ and degree at most $2$
for $n\geq 3$. \textit{The small bounds have been confirmed by a number of
examples using the Groebner basis applied to }$Q[a_{1},a_{2},..,a_{n}]$%
\textit{\ modulo the ideal generated by }$U\left( m,n;f\right) $\textit{.}
\end{problem}

\section{Reduction of the presentation of $\protect\chi \left( A\right) $}

It is interesting to reduce the number of relations in the definition of $%
G\left( A,f\right) $, particularly for the sake of applications. This is
difficult to carry out in general. We treat here the question for the group $%
\chi \left( A\right) $. Given a generating set $S$ of $A$ then define 
\begin{equation*}
\chi \left( A,S;m\right) =\left\langle A,A^{\psi }\mid \left[ a,a^{\psi }%
\right] =e\text{ for all }a\in \cup _{1\leq i\leq m}S^{i}\right\rangle \text{%
.}
\end{equation*}%
The construction $\chi \left( A,S;1\right) $ does not conserve finiteness in
general. For let $A=A_{2,2}$ be generated by $S=\left\{ a_{1},a_{2}\right\} $%
. Then, on defining $x_{1}=a_{1}a_{2}^{\psi },x_{2}=a_{1}^{\psi }a_{2}$, we
find $\chi \left( A,S;1\right) =\left\langle x_{1},x_{2}\right\rangle A$,
where $\left\langle x_{1},x_{2}\right\rangle $ is free abelian of rank $2$. 

We start with

\begin{proposition}
Let $H$ be a group generated by $x_{1},x_{2},y_{1},y_{2}$ such that $\left[
y_{1},x_{1}\right] =e=\left[ y_{2},x_{2}\right] $. Then \newline
(i)%
\begin{equation*}
\left[ y_{1}y_{2},x_{1}x_{2}\right] =\left[ y_{1}^{y_{2}},x_{2}\right] \left[
y_{2},x_{1}^{x_{2}}\right] \text{;}
\end{equation*}%
(ii) if in addition $\left[ x_{1},x_{2}\right] =\left[ y_{1},y_{2}\right] =e$
holds then%
\begin{equation*}
\left[ y_{1}y_{2},x_{1}x_{2}\right] =\left[ y_{1},x_{2}\right] \left[
y_{2},x_{1}\right] \text{ (*);}
\end{equation*}%
(iii) if furthermore $\left[ y_{1}y_{2},x_{1}x_{2}\right] =e$ holds then $H$
is nilpotent of class at most $2$ with derived subgroup $H^{\prime
}=\left\langle \left[ y_{1},x_{2}\right] \right\rangle $.
\end{proposition}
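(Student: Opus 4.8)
The plan is to prove the three parts in sequence, using a standard commutator calculus with the Hall--Witt style identities $[ab,c]=[a,c]^b[b,c]$ and $[a,bc]=[a,c][a,b]^c$, together with the two hypotheses $[y_1,x_1]=e$ and $[y_2,x_2]=e$.

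For part (i), I would expand $[y_1y_2,x_1x_2]$ using $[ab,c]=[a,c]^b[b,c]$ with $a=y_1$, $b=y_2$, $c=x_1x_2$, obtaining $[y_1,x_1x_2]^{y_2}[y_2,x_1x_2]$. Then I expand each inner commutator with $[a,bc]=[a,c][a,b]^c$: the first becomes $[y_1,x_2]^{x_1}[y_1,x_1]^{x_2}=[y_1,x_2]^{x_1}$ since $[y_1,x_1]=e$, and the second becomes $[y_2,x_2][y_2,x_1]^{x_2}=[y_2,x_1]^{x_2}$ since $[y_2,x_2]=e$. Substituting back gives $\bigl([y_1,x_2]^{x_1}\bigr)^{y_2}[y_2,x_1]^{x_2}$. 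The remaining task is to massage $\bigl([y_1,x_2]^{x_1}\bigr)^{y_2}$ into $[y_1^{y_2},x_2]$; here I would use that $[y_1,x_2]^{x_1}=[y_1^{x_1},x_2^{x_1}]=[y_1,x_2^{x_1}]$, and since $x_1$ commutes with $y_1$, a short rewrite (or rather, rewriting $[y_1,x_2]=[y_1,x_1x_2]$ directly, using $[y_1,x_1]=e$) should land the identity in the form stated. The one point requiring a little care is matching the conjugating elements exactly as the statement has them; I would simply recompute $[y_1y_2,x_1x_2]$ grouping as $[y_1\cdot(y_2), x_2\cdot(\text{rest})]$ in whatever bracketing makes $[y_1^{y_2},x_2][y_2,x_1^{x_2}]$ fall out cleanly.

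For part (ii), the extra hypotheses $[x_1,x_2]=[y_1,y_2]=e$ let me drop the conjugations: $y_1^{y_2}=y_1$ and, using $[y_2,x_1^{x_2}]=[y_2,x_1]^{x_2}$ together with the fact that in this more commutative setting the conjugation by $x_2$ becomes trivial modulo higher commutators — actually more directly, since $[y_1,x_1]=[x_1,x_2]=e$ one checks $[y_2,x_1^{x_2}]=[y_2,x_1]$ outright, or one reruns the expansion of part (i) and observes every conjugating element can be absorbed. So (i) collapses to $[y_1y_2,x_1x_2]=[y_1,x_2][y_2,x_1]$, which is (*).

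For part (iii), assuming in addition $[y_1y_2,x_1x_2]=e$, identity (*) yields $[y_1,x_2]=[y_2,x_1]^{-1}$. I would then show $H'$ is generated by $[y_1,x_2]$ and is central. The generators of $H$ are $x_1,x_2,y_1,y_2$, so $H'$ is the normal closure of the six basic commutators among them; four of these are trivial by hypothesis ($[y_1,x_1],[y_2,x_2],[x_1,x_2],[y_1,y_2]$), leaving $[y_1,x_2]$ and $[y_2,x_1]$, which by (*) are mutually inverse. So $H'=\langle [y_1,x_2]\rangle^H$. To finish, I must check $[y_1,x_2]$ is central, i.e. commutes with each of $x_1,x_2,y_1,y_2$; this is where the main work lies, and I expect it to be the chief obstacle. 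The cleanest route: show $c:=[y_1,x_2]$ commutes with $x_1$. Since $[y_1,x_1]=e$, we have $c=[y_1,x_1x_2]$; since $[y_1,x_1x_2]$ relates to $[y_1,x_2x_1]$ via $[x_1,x_2]=e$, and applying (*)-type manipulations with the roles of the pairs swapped (replacing $x_1x_2$ by $x_2x_1$ changes nothing), one derives that conjugating $c$ by $x_1$ gives $c$ again. Symmetrically, using $[y_2,x_2]=e$ and $[y_1,y_2]=e$, one shows $c$ commutes with $y_2$ and with $y_1$ (noting $c=[y_1,x_2]$ is unaffected by these); commuting with $x_2$ is immediate since $c\in\langle x_2,y_1\rangle'$ modulo the relation, or follows from the identity $[y_1,x_2,x_2]=e$ which itself drops out of (*) applied with $x_1\mapsto x_2$. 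Once $c$ is central, $H'=\langle c\rangle$ is a central cyclic subgroup and $H/H'$ is abelian (all basic commutators die), so $H$ has class at most $2$, completing the proof.
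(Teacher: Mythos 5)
Your overall strategy coincides with the paper's (whose proof is only a three-line sketch): prove (i) and (ii) by direct commutator expansion, then get (iii) by showing $\left[ y_{1},x_{2}\right]$ is central and is, up to inversion, the only surviving basic commutator among the generators. Parts (i) and (ii) are essentially fine; note only that with the convention $\left[ a,bc\right] =\left[ a,c\right] \left[ a,b\right] ^{c}$ the expansion of $\left[ y_{1},x_{1}x_{2}\right]$ is $\left[ y_{1},x_{2}\right] \left[ y_{1},x_{1}\right] ^{x_{2}}=\left[ y_{1},x_{2}\right]$ with no conjugation by $x_{1}$, so one lands directly on $\left[ y_{1},x_{2}\right] ^{y_{2}}\left[ y_{2},x_{1}\right] ^{x_{2}}=\left[ y_{1}^{y_{2}},x_{2}\right] \left[ y_{2},x_{1}^{x_{2}}\right]$ (using $x_{2}^{y_{2}}=x_{2}$ and $y_{2}^{x_{2}}=y_{2}$); the detour through $\left[ y_{1},x_{2}\right] ^{x_{1}}$ that you flag as needing care is simply a misapplied identity and should be dropped.

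The genuine gap is in the centrality check in (iii). You must verify that $c=\left[ y_{1},x_{2}\right]$ is fixed under conjugation by each of the four generators. For $x_{1}$ and $y_{2}$ this is immediate, since each commutes with both $y_{1}$ and $x_{2}$. But your justifications for $y_{1}$ and $x_{2}$ do not work as stated: conjugation by $y_{1}$ does not visibly fix $\left[ y_{1},x_{2}\right]$ (because $y_{1}$ need not commute with $x_{2}$), membership of $c$ in $\left\langle x_{2},y_{1}\right\rangle ^{\prime }$ says nothing about $x_{2}$ centralizing $c$, and ``(*) applied with $x_{1}\mapsto x_{2}$'' is not a legitimate substitution, since the hypotheses are tied to the specific generators. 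The missing move --- which is exactly the paper's one-line argument --- is to deploy the relation you already derived, $\left[ y_{1},x_{2}\right] =\left[ y_{2},x_{1}\right] ^{-1}=\left[ x_{1},y_{2}\right]$: written as $\left[ x_{1},y_{2}\right]$, the commutator is manifestly centralized by $x_{2}$ (which commutes with both $x_{1}$ and $y_{2}$) and by $y_{1}$ (likewise). Thus each generator centralizes $c$ in whichever of the two guises $\left[ y_{1},x_{2}\right]$ or $\left[ x_{1},y_{2}\right]$ is convenient, so $H^{\prime }=\left\langle c\right\rangle$ is central and the class bound follows. With that repair your argument is complete and is the same as the paper's.
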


\begin{proof}
The first two items are shown directly. The last item follows from%
\begin{eqnarray*}
\left[ y_{1},x_{2}\right] &=&\left[ x_{1},y_{2}\right] ,\left[ y_{1},x_{2}%
\right] ^{x_{1}}=\left[ y_{1},x_{2}\right] , \\
\left[ y_{1},x_{2}\right] ^{x_{2}} &=&\left[ x_{1},y_{2}\right] ^{x_{2}}=%
\left[ x_{1},y_{2}\right] \text{.}
\end{eqnarray*}
\end{proof}

\begin{corollary}
Let $A$ be an abelian group generated by $S=\left\{ a_{1},a_{2}\right\} $
and let $G=\chi \left( A,S;2\right) $. Then, $G=\chi \left( A\right) $.
\end{corollary}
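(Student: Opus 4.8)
The goal is to show that imposing the weak-commutativity relations $[a,a^\psi]=e$ only for $a$ in $S\cup S^2$ already forces all the relations $[a,a^\psi]=e$ for arbitrary $a\in A$, i.e. that $\chi(A,S;2)=\chi(A)$ when $A=\langle a_1,a_2\rangle$ is abelian. Since every element of $A$ is of the form $a_1^i a_2^j$, the natural strategy is an induction on the ``length'' $|i|+|j|$ of the exponent vector, reducing each new relation $[a_1^i a_2^j,(a_1^i a_2^j)^\psi]=e$ to relations already known. The base cases are precisely $S$ (the elements $a_1,a_2$) and the products of two generators $S^2$ (the elements $a_1^2,a_2^2,a_1a_2$ and their analogues with inverses), which hold by definition of $\chi(A,S;2)$.

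\textbf{Key steps.} First I would set $x_1,x_2$ to be two generators and $y_1,y_2$ to be the corresponding $\psi$-images, so that $[y_i,x_i]=e$ for $i=1,2$. The engine is Proposition~1(ii): under the standing hypotheses $[x_1,x_2]=[y_1,y_2]=e$ (which hold in $\chi(A,S;2)$ since $A$ and $A^\psi$ are abelian) one has the identity
\begin{equation*}
[y_1y_2,x_1x_2]=[y_1,x_2][y_2,x_1].
\end{equation*}
Applying this with $x_1=a_1^{\,i},x_2=a_2^{\,j}$ (and $y_i$ their $\psi$-images) expresses $[\,(a_1^i a_2^j),(a_1^i a_2^j)^\psi\,]$ as a product of ``mixed'' commutators $[a_2^{\,j\,\psi},a_1^{\,i}]$ and $[a_1^{\,i\,\psi},a_2^{\,j}]$, which do not involve the diagonal element $a_1^i a_2^j$ itself. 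The second step is to handle these mixed commutators together with the pure powers $[a_1^{\,i},(a_1^{\,i})^\psi]$ by a separate induction: using bilinearity of the commutator map modulo higher terms — more precisely, expanding $[y_1^{\,i},x_1]$ and $[y_1,x_1^{\,i}]$ via the commutator-collection identities and the relations $[y_1,x_1]=e$, $[a_1,a_1^\psi]=e$ — one shows each mixed or pure commutator of total degree $\le m$ lies in the normal closure of the degree-$\le 2$ relations. Finally I would assemble these to conclude $[a,a^\psi]=e$ for every $a\in A$, hence the defining relations of $\chi(A)$ all hold in $\chi(A,S;2)$, giving a surjection $\chi(A,S;2)\to\chi(A)$; the reverse surjection is trivial since $\chi(A,S;2)$ has fewer relations, so the two groups coincide.

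\textbf{Main obstacle.} The delicate point is the bookkeeping in the inductive reduction of $[y_1^{\,i},x_1]$-type and mixed $[y_1^{\,i},x_2^{\,j}]$-type commutators: a priori the group $\chi(A,S;2)$ need not be nilpotent or even have an abelian derived subgroup, so one cannot freely use ``modulo higher central terms'' arguments. One must verify that the relevant identities (e.g. $[y_1^{\,i},x_1]=\prod_k [y_1,x_1,\underbrace{x_1,\dots,x_1}_{k}]^{\binom{i}{k+1}}$ and its two-variable analogue) actually collapse using only $[y_1,x_1]=e$ and the finitely many degree-$\le 2$ relations, without tacitly assuming commutativity relations of higher degree. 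Once one checks that the degree-$2$ relations $[a_1^2,(a_1^2)^\psi]=e$, $[a_1a_2,(a_1a_2)^\psi]=e$, etc., suffice to make the collection terms vanish in the required range, the induction propagates cleanly. I expect this is exactly why the corollary is stated only for $2$-generated $A$ and with the cutoff $m=2$: the identity $(*)$ of Proposition~1 is a two-variable phenomenon, and the degree-$2$ relations are the minimal input that tames the collection formula.
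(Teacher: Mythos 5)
Your plan correctly identifies Proposition~1 as the engine, but you use only part (ii) and then leave the decisive step --- evaluating the mixed commutators $\left[ (a_{1}^{\psi })^{i},a_{2}^{j}\right] $ without any a priori nilpotency --- as an unresolved ``obstacle.'' That is a genuine gap: the identity (*) reduces $\left[ (a_{1}^{i}a_{2}^{j})^{\psi },a_{1}^{i}a_{2}^{j}\right] $ to a product of mixed commutators but gives no means of killing them, and the commutator-collection induction you sketch cannot get started precisely because, as you yourself note, nothing yet bounds the higher-weight terms. The missing ingredient is Proposition~1(iii), which is designed to supply exactly that bound. Take $x_{1}=a_{1}$, $x_{2}=a_{2}$, $y_{1}=a_{1}^{\psi }$, $y_{2}=a_{2}^{\psi }$: the degree-one relations give $\left[ y_{i},x_{i}\right] =e$, abelianness of $A$ and $A^{\psi }$ gives $\left[ x_{1},x_{2}\right] =\left[ y_{1},y_{2}\right] =e$, and the single degree-two relation $\left[ a_{1}a_{2},(a_{1}a_{2})^{\psi }\right] =e$ gives $\left[ y_{1}y_{2},x_{1}x_{2}\right] =e$. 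Part (iii) then asserts that the whole group $G=\left\langle x_{1},x_{2},y_{1},y_{2}\right\rangle $ is nilpotent of class at most $2$ with central cyclic derived subgroup $G^{\prime }=\left\langle \left[ a_{1}^{\psi },a_{2}\right] \right\rangle $, and its proof records $\left[ a_{1}^{\psi },a_{2}\right] =\left[ a_{1},a_{2}^{\psi }\right] $.

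With that in hand no induction on degree is needed: centrality of $G^{\prime }$ makes the commutator map bilinear, so for every $a=a_{1}^{i}a_{2}^{j}$ one has
\begin{equation*}
\left[ a^{\psi },a\right] =\left[ (a_{1}^{\psi })^{i}(a_{2}^{\psi
})^{j},a_{1}^{i}a_{2}^{j}\right] =\left[ a_{1}^{\psi },a_{2}\right] ^{ij}
\left[ a_{2}^{\psi },a_{1}\right] ^{ij}=\left( \left[ a_{1}^{\psi },a_{2}
\right] \left[ a_{1},a_{2}^{\psi }\right] ^{-1}\right) ^{ij}=e\text{,}
\end{equation*}
so all defining relations of $\chi \left( A\right) $ already hold in $\chi
\left( A,S;2\right) $ and the canonical epimorphism is an isomorphism. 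In short, your strategy is the intended one, but as written the proof does not close: without invoking (iii) (or re-proving its content) the reduction of the mixed commutators fails, and with (iii) the entire bookkeeping you were worried about evaporates.
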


For abelian groups $A$ of rank $3$, the situation becomes less simple.

\begin{proposition}
Let $A$ be an abelian group generated by $S=\left\langle a_{i}|1\leq i\leq
3\right\rangle $. Then the following equations hold $G=\chi \left(
A,S;2\right) $: 
\begin{equation*}
\xi =\left[ a_{1}^{\psi }a_{2}^{\psi }a_{3}^{\psi },a_{1}a_{2}a_{3}\right] =%
\left[ a_{1}^{\psi },a_{3}\right] ^{a_{2}^{\psi }}\left[ a_{3}^{\psi },a_{1}%
\right] ^{a_{2}},
\end{equation*}%
\begin{equation*}
\left[ a_{1}^{\psi },a_{3}^{2}\right] ^{\left[ a_{2},\psi \right] }=\left[
a_{1}^{\psi },a_{3}^{2}\right] \text{.}
\end{equation*}%
\newline
\end{proposition}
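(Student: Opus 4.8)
The plan is to exploit the relations of $G = \chi(A,S;2)$, namely $[a,a^{\psi}] = e$ for every $a \in S \cup S^{2}$, together with Proposition 1(i), which expands a commutator of products when the ``diagonal'' commutators vanish. I would first isolate the relevant instances of the defining relations: from the generators themselves we get $[a_{i},a_{i}^{\psi}] = e$ for $1 \le i \le 3$, and from $S^{2}$ we get $[a_{i}a_{j}, (a_{i}a_{j})^{\psi}] = e$ for all $i,j$. Since $A$ is abelian, $A^{\psi}$ is abelian as well, and $(a_{i}a_{j})^{\psi} = a_{i}^{\psi}a_{j}^{\psi}$; so each such relation is exactly the hypothesis needed to invoke Proposition 1(ii) with $y_{1} = a_{i}^{\psi}$, $y_{2} = a_{j}^{\psi}$, $x_{1} = a_{i}$, $x_{2} = a_{j}$, yielding $[a_{i}^{\psi}, a_{j}][a_{j}^{\psi}, a_{i}] = e$ — that is, $[a_{i}^{\psi}, a_{j}] = [a_{i}, a_{j}^{\psi}]$ for all $i \neq j$ in $\{1,2,3\}$. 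These ``symmetry'' identities are the workhorse of the argument.

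For the first displayed formula, I would compute $\xi = [a_{1}^{\psi}a_{2}^{\psi}a_{3}^{\psi}, a_{1}a_{2}a_{3}]$ by peeling off one factor at a time via the standard commutator expansions $[xy,z] = [x,z]^{y}[y,z]$ and $[x,yz] = [x,z][x,y]^{z}$. Grouping $a_{1}^{\psi}a_{2}^{\psi}$ against $a_{1}a_{2}$ and then handling $a_{3}^{\psi}$ and $a_{3}$, one gets a product of commutators $[a_{i}^{\psi}, a_{j}]$ (conjugated by various elements) with $i \neq j$; the three ``diagonal'' terms $[a_{i}^{\psi},a_{i}]$ drop out, and the six off-diagonal terms pair up under the symmetry $[a_{i}^{\psi},a_{j}] = [a_{i},a_{j}^{\psi}] = [a_{j}^{\psi},a_{i}]^{-1}$, so that four of them cancel in pairs and only $[a_{1}^{\psi}, a_{3}]^{a_{2}^{\psi}}[a_{3}^{\psi}, a_{1}]^{a_{2}}$ survives. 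The bookkeeping of which conjugating element attaches to which surviving commutator is the delicate part: one must track the order in which factors are commuted past, and keep the conjugators $a_{2}^{\psi}$ and $a_{2}$ in the right place rather than silently discarding them (they do \emph{not} drop out, since $G$ is not yet known to be nilpotent of small class).

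For the second displayed formula, the idea is to apply the symmetry identities to the relation coming from $(a_{2}a_{3})^{2} = a_{2}^{2}a_{3}^{2}$-type elements of $S^{2}$, or more directly to massage $[a_{1}^{\psi}, a_{3}^{2}]$ using $[a_{1}^{\psi},a_{3}^{2}] = [a_{1}^{\psi},a_{3}][a_{1}^{\psi},a_{3}]^{a_{3}}$ and the relation $[a_{1}^{\psi},a_{3}] = [a_{1},a_{3}^{\psi}]$, then conjugate by $[a_{2},\psi] = a_{2}^{-1}a_{2}^{\psi}$ and show the two sides agree by repeatedly substituting the symmetry identities and the diagonal relations $[a_{2},a_{2}^{\psi}] = e$, $[a_{3},a_{3}^{\psi}] = e$. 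I expect the main obstacle to be precisely this last identity: unlike the first formula, it is an assertion that a specific conjugation acts trivially on $[a_{1}^{\psi},a_{3}^{2}]$, which requires showing a higher commutator vanishes, and that will likely force a careful iterated use of Proposition 1(i) applied to the relation on $a_{1}a_{3} \in S^{2}$ (equivalently an expansion of $[(a_{1}a_{3})^{\psi}, a_{1}a_{3}] = e$) to produce the needed cancellation, rather than following from the rank-$2$ symmetry relations alone.
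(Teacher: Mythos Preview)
Your plan for the first identity is essentially the paper's: group $a_{1}^{\psi}a_{2}^{\psi}$ against $a_{1}a_{2}$ and $a_{3}^{\psi}$ against $a_{3}$, apply formula~(*) from Proposition~1(ii), then expand and let the $\{2,3\}$ cross-terms cancel via $[a_{2}^{\psi},a_{3}]=[a_{2},a_{3}^{\psi}]$. That part is fine.

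The gap is in the second identity. You propose a direct attack on $[a_{1}^{\psi},a_{3}^{2}]^{[a_{2},\psi]}$, expanding and hoping repeated substitution of the rank-$2$ symmetry relations will collapse it; you yourself flag this as ``the main obstacle'' and do not give a mechanism by which the required triple commutator actually vanishes. The paper's proof does something you have not spotted: since $A$ and $A^{\psi}$ are abelian, $\xi$ is invariant under the swap $a_{1}\leftrightarrow a_{3}$. Applying that swap to the \emph{first} formula yields a second expression
\[
\xi \;=\; [a_{3}^{\psi},a_{1}]^{a_{2}^{\psi}}\,[a_{1}^{\psi},a_{3}]^{a_{2}},
\]
and equating the two expressions for $\xi$ gives, with $c=[a_{1}^{\psi},a_{3}]$ and using $[a_{3}^{\psi},a_{1}]=c^{-1}$ (valid because $\langle a_{1},a_{3},a_{1}^{\psi},a_{3}^{\psi}\rangle$ has class at most~$2$ by Proposition~1(iii)),
\[
c^{a_{2}^{\psi}}(c^{-1})^{a_{2}} \;=\; (c^{-1})^{a_{2}^{\psi}}c^{a_{2}},
\]
whence $(c^{2})^{a_{2}^{\psi}}=(c^{2})^{a_{2}}$. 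Since $c^{2}=[a_{1}^{\psi},a_{3}^{2}]$ in that class-$2$ subgroup, this is exactly the second identity. So the second formula is not an independent obstacle at all: it is an immediate corollary of the first formula together with the permutation symmetry of $\xi$. Your proposal misses this step, and without it the ``direct'' route you sketch has no clear termination.
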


\begin{proof}
On substituting $x_{1}=a_{1}a_{2},x_{2}=a_{3},y_{1}=a_{1}^{\psi }a_{2}^{\psi
},y_{2}=a_{3}^{\psi }$ in (*) of the previous proposition, we obtain%
\begin{eqnarray*}
\xi &=&\left[ a_{1}^{\psi }a_{2}^{\psi }a_{3}^{\psi },a_{1}a_{2}a_{3}\right]
\\
&=&\left[ a_{1}^{\psi },a_{3}\right] ^{a_{2}^{\psi }}\left[ a_{2}^{\psi
},a_{3}\right] \left[ a_{3}^{\psi },a_{2}\right] \left[ a_{3}^{\psi },a_{1}%
\right] ^{a_{2}} \\
&=&\left[ a_{1}^{\psi },a_{3}\right] ^{a_{2}^{\psi }}\left[ a_{2}^{\psi
},a_{3}\right] \left[ a_{3},a_{2}^{\psi }\right] \left[ a_{3}^{\psi },a_{1}%
\right] ^{a_{2}} \\
&=&\left[ a_{1}^{\psi },a_{3}\right] ^{a_{2}^{\psi }}\left[ a_{3}^{\psi
},a_{1}\right] ^{a_{2}}\text{.}
\end{eqnarray*}

Substitute $a_{1}\leftrightarrow a_{3},a_{2}\leftrightarrow a_{2},\psi
\rightarrow \psi $ above to obtain%
\begin{equation*}
\xi =\left[ a_{3}^{\psi },a_{1}\right] ^{a_{2}^{\psi }}\left[
a_{1},a_{3}^{\psi }\right] ^{a_{2}}\text{.}
\end{equation*}

Therefore, since $\left[ a_{1},a_{3}^{\psi }\right] =\left[ a_{1}^{\psi
},a_{3}\right] $, and $\left\langle a_{1},a_{3},a_{1}^{\psi },a_{3}^{\psi
}\right\rangle $ has class at most $2$, we have 
\begin{eqnarray*}
\left[ a_{1}^{\psi },a_{3}\right] ^{a_{2}^{\psi }}\left[ a_{3}^{\psi },a_{1}%
\right] ^{a_{2}} &=&\left[ a_{3}^{\psi },a_{1}\right] ^{a_{2}^{\psi }}\left[
a_{1},a_{3}^{\psi }\right] ^{a_{2}}, \\
\left[ a_{1}^{\psi },a_{3}\right] ^{a_{2}^{\psi }}\left[ a_{3}^{\psi },a_{1}%
\right] ^{a_{2}} &=&\left[ a_{3},a_{1}^{\psi }\right] ^{a_{2}^{\psi }}\left[
a_{1},a_{3}^{\psi }\right] ^{a_{2}}, \\
\left[ a_{1}^{\psi },a_{3}\right] ^{2a_{2}^{\psi }} &=&\left[ a_{1}^{\psi
},a_{3}\right] ^{2a_{2}}, \\
\left[ a_{1}^{\psi },a_{3}^{2}\right] ^{a_{2}^{\psi }} &=&\left[ a_{1}^{\psi
},a_{3}^{2}\right] ^{a_{2}} \\
\left[ a_{1}^{\psi },a_{3}^{2}\right] ^{\left[ a_{2},\psi \right] } &=&\left[
a_{1}^{\psi },a_{3}^{2}\right] \text{.}
\end{eqnarray*}

Suppose $A$ has odd order. Then, $\left[ a_{1}^{\psi },a_{3}\right] ^{\left[
a_{2},\psi \right] }=\left[ a_{1}^{\psi },a_{3}\right] $ and therefore $\xi =%
\left[ a_{1}^{\psi }a_{2}^{\psi }a_{3}^{\psi },a_{1}a_{2}a_{3}\right] =e$.
By the previous corollary, we can substitute the $a_{i}$'s by their powers
in this last equation.
\end{proof}

For groups $A$ of odd order the reduction is drastic.

\begin{corollary}
Let $A$ be a finite abelian group of odd order generated by $S$ and $G=\chi
\left( A,S;2\right) $.\ Then, $G=\chi \left( A\right) $.
\end{corollary}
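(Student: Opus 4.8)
The plan is to reduce the infinite-looking presentation of $\chi\left(A\right)$ to the presentation $\chi\left(A,S;2\right)$ by showing that the extra commutator relations $\left[a,a^{\psi}\right]=e$ for $a\in\bigcup_{i\geq 3}S^{i}$ are all consequences of those for words of length $\leq 2$ in the generators. First I would set up the obvious induction on the length $\ell$ of a positive word $w=a_{i_{1}}a_{i_{2}}\cdots a_{i_{\ell}}$ in the generators $a_{1},\dots,a_{m}$ (where $S=\{a_{1},\dots,a_{m}\}$ and, since $A$ is abelian of odd order, it suffices to treat such positive words and then invoke the $\chi\left(A,S;2\right)$-version of Lemma/Corollary on passing to powers), the base cases $\ell=1,2$ being exactly the defining relations of $\chi\left(A,S;2\right)$.

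For the inductive step I would split $w=uv$ with $u,v$ shorter words, writing $u=y_{1},v=y_{2}$ and their ``images'' $u^{\psi}=y_{1}',v^{\psi}=y_{2}'$, and apply Proposition~(*) with $x_{1}=u,x_{2}=v,y_{1}=u^{\psi},y_{2}=v^{\psi}$: by induction $\left[u,u^{\psi}\right]=\left[v,v^{\psi}\right]=e$, and $A$, $A^{\psi}$ are abelian, so
\begin{equation*}
\left[w,w^{\psi}\right]=\left[uv,\left(uv\right)^{\psi}\right]=\left[u^{\psi},v\right]\left[v^{\psi},u\right]\text{,}
\end{equation*}
which reduces the problem to showing each ``mixed'' commutator $\left[u^{\psi},v\right]$ with $u,v$ strictly shorter and over disjoint-or-overlapping generator sets is trivial. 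The genuinely new input, however, is the rank-$3$ computation in the preceding Proposition: its consequence $\xi=\left[a_{1}^{\psi}a_{2}^{\psi}a_{3}^{\psi},a_{1}a_{2}a_{3}\right]=e$ when $A$ has odd order, together with the fact (from the rank-$3$ Proposition and the rank-$2$ Corollary) that in $\chi\left(A,S;2\right)$ of odd order one may replace each $a_{i}$ by an arbitrary power $a_{i}^{l_{i}}$. So after the splitting above one is left to kill mixed commutators $\left[a_{i}^{\psi},a_{j}\right]$ twisted by powers of a third (or further) generator, and these are handled by iterating the two displayed identities of the rank-$3$ Proposition; the odd-order hypothesis is what makes $\left[a_{i}^{\psi},a_{j}^{2}\right]^{\left[a_{k},\psi\right]}=\left[a_{i}^{\psi},a_{j}^{2}\right]$ upgrade to $\left[a_{i}^{\psi},a_{j}\right]^{\left[a_{k},\psi\right]}=\left[a_{i}^{\psi},a_{j}\right]$, since squaring is invertible modulo the exponent.

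In more detail, the scheme I would follow is: (a) reduce to positive words using that $A$ and $A^{\psi}$ are abelian and that inverses of relators are relators; (b) prove by induction on $\ell$ that $\left[w,w^{\psi}\right]=e$, using Proposition~(*) to peel off the splitting $w=uv$ and the rank-$3$ identities plus the odd-order trick to dispose of the residual twisted mixed commutators; (c) conclude that every defining relation of $\chi\left(A\right)$ holds in $\chi\left(A,S;2\right)$, hence the natural surjection $\chi\left(A,S;2\right)\twoheadrightarrow\chi\left(A\right)$ is an isomorphism. I expect the main obstacle to be bookkeeping in step (b): controlling how a long product $w=uv$ decomposes into mixed commutators over possibly repeated generators, and verifying that \emph{every} such mixed term, once conjugation by the remaining letters is taken into account, is forced to be trivial — in other words, checking that the rank-$2$ and rank-$3$ relations really do generate the full relation module and that no genuinely higher-rank obstruction survives. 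A secondary, more mechanical difficulty is keeping the exponent-vs-power substitutions consistent, i.e. making sure the ``replace $a_{i}$ by $a_{i}^{l_{i}}$'' license from the rank-$2$/rank-$3$ Propositions is applied only within subgroups where it has already been established.
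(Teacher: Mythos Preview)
Your inductive framework is reasonable, but the specific reduction you propose is circular. When you split $w=uv$ and apply Proposition~(*) to obtain
\[
\left[w,w^{\psi}\right]=\left[u^{\psi},v\right]\left[v^{\psi},u\right],
\]
the task of showing this product is trivial is \emph{equivalent} to the relation $\left[uv,(uv)^{\psi}\right]=e$ you started with: part~(iii) of that same Proposition shows that $\left[u^{\psi},v\right]=\left[u,v^{\psi}\right]$ (which is what you need) follows from the hypotheses of~(ii) \emph{together with} $\left[uv,(uv)^{\psi}\right]=e$. So no genuine reduction has taken place, and your step~(b) would have to prove the identity $\left[u^{\psi},v\right]=\left[u,v^{\psi}\right]$ for arbitrary shorter words $u,v$ by some independent route. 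Your sketch gestures at the rank-$3$ identity $\left[a_{i}^{\psi},a_{j}\right]^{\left[a_{k},\psi\right]}=\left[a_{i}^{\psi},a_{j}\right]$ for this, but that identity concerns single generators; extending it to products of generators is again essentially the statement to be proved.

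The paper avoids this trap by a different inductive manoeuvre: instead of \emph{splitting} $w$ into two pieces, it \emph{regroups} $w=a_{1}\cdots a_{m}$ as a product of $m-1$ elements by setting $S'=\{a_{1},\dots,a_{m-2},\,a_{m-1}a_{m}\}$, and then applies the inductive hypothesis to this shorter product. The point is that the length-$\leq 2$ relations for $S'$ are length-$\leq 3$ relations for $S$ (or rank-$2$ relations with powers), and these are already known to hold in $\chi(A,S;2)$ by the rank-$3$ Proposition in odd order. Thus the inductive hypothesis (for $m-1$ factors) applies directly and yields $\left[a_{1}\cdots a_{m},(a_{1}\cdots a_{m})^{\psi}\right]=e$ with no residual mixed commutators to dispose of. Your ``main obstacle'' simply does not arise in the paper's argument.
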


\begin{proof}
Let $\left\vert S\right\vert =m\geq 3$. We proceed by induction on $m$. By
the previous proposition, $\chi \left( A;2\right) =\chi \left( A;3\right) $.
We assume $\chi \left( A;2\right) =\chi \left( A;m-1\right) $. Then we
simply apply our argument to the set%
\begin{equation*}
\left\{ a_{1},a_{2},...,a_{m-2,}a_{m-1}a_{m}\right\}
\end{equation*}%
with $m-1$ elements and obtain%
\begin{equation*}
\left[ a_{1}^{\psi }a_{2}^{\psi }...a_{m-2}^{\psi }\left(
a_{m-1}a_{m}\right) ^{\psi },a_{1}a_{2}...a_{m-2}\left( a_{m-1}a_{m}\right) %
\right] =e\text{.}
\end{equation*}
\end{proof}

\textbf{Example 1}. The following example provides us with a glimpse into
the problem of reduction of the presentation of $G\left( A_{p,k},f\right) $
in general and how it compares with that of $\chi \left( A_{p,k}\right) $.

Let $A,B$ be isomorphic to $A_{p,3}$ with respective generators $\left\{
a_{1},a_{2},a_{3}\right\} ,\left\{ b_{1},b_{2},b_{3}\right\} $. Define%
\begin{equation*}
G=\,\left\langle 
\begin{array}{c}
A,B\mid \left[ a_{i},b_{i}\right] =e\text{ }\left( i=1,2,3\right) , \\ 
\left[ a_{1}a_{2},b_{1}b_{2}^{-1}\right] =\left[ a_{1}a_{3},b_{1}b_{3}\right]
=\left[ a_{2}a_{3},b_{2}b_{3}\right] =e\text{. }%
\end{array}%
\right\rangle \text{.}
\end{equation*}%
With the use of GAP, we find that the resulting group for $p=3,5,7$ to be
finite metabelian of order $p^{11}$ and of nilpotency class $3$. We also
find that 
\begin{equation*}
\left[ a_{1}a_{2}^{-1},b_{1}b_{2}\right] =\left[
a_{1}a_{3}^{-1},b_{1}b_{3}^{-1}\right] =\left[
a_{2}a_{3}^{-1},b_{2}b_{3}^{-1}\right] =e
\end{equation*}%
hold but $\left[ a_{1}a_{2}a_{3},b_{1}^{i}b_{2}^{j}b_{3}\right] \not=e$ for
any $1\leq i,j\leq p-1$. These results should be compared with those for $%
\chi \left( A_{p,3}\right) $ which has order $p^{9}$ and nilpotency class $2$%
.

We go back to the case $\chi \left( A_{p,3}\right) $ for $p=2$.

\begin{theorem}
Let $A_{2,3}$ be generated by $S=\left\{ a_{1},a_{2},a_{3}\right\} $, $%
G=\chi \left( A_{2,3},S;2\right) $ and $\xi =\left[ a_{1}^{\psi }a_{2}^{\psi
}a_{3}^{\psi },a_{1}a_{2}a_{3}\right] $. Then the kernel $K$ of the
epimorphism $\phi :G\rightarrow \chi \left( A_{2,3}\right) $ extended from $%
a_{i}\rightarrow a_{i},a_{i}^{\psi }\rightarrow a_{i}^{\psi }$ $\left(
i=1,2,3\right) $ is the normal closure of $\left\langle \xi \right\rangle $
in $G$ and is free abelian of rank $4$.
\end{theorem}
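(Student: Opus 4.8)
The plan is to analyze the group $G=\chi(A_{2,3},S;2)$ presented by the nine commuting relations $[a,a^\psi]=e$ for $a$ ranging over $S\cup S^2$ (the seven nontrivial elements of $A_{2,3}$), and to locate precisely the single obstruction to $G$ collapsing onto $\chi(A_{2,3})$. By the earlier Proposition on rank-$3$ abelian groups, in $G$ the only "missing" relation relative to the full $\chi$-presentation is $\xi=[a_1^\psi a_2^\psi a_3^\psi,a_1a_2a_3]=e$: indeed that Proposition shows the other length-$3$ products already commute in $\chi(A,S;2)$, modulo the displayed identity $[a_1^\psi,a_3^2]^{[a_2,\psi]}=[a_1^\psi,a_3^2]$ which for $p=2$ is vacuous. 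So first I would verify carefully that adjoining $\xi=e$ to $G$ gives exactly $\chi(A_{2,3})$; equivalently, that $K=\langle\xi\rangle^G$ is precisely the kernel of $\phi$. This is essentially a bookkeeping check: $\phi$ is surjective with $\xi\mapsto e$, so $\langle\xi\rangle^G\le\ker\phi$, and the reverse inclusion is the statement that $\chi(A_{2,3},S;2)/\langle\xi\rangle^G$ satisfies all the $\chi(A_{2,3})$ relations, which is the content of the Proposition together with the Corollary allowing one to replace generators by arbitrary elements once $\xi=e$.

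Next I would pin down the structure of $K$ as a $G$-module. Since $\chi(A_{2,3})$ is a finite $2$-group (of order $2^9$, class $2$) and $G$ should be metabelian or close to it, $K$ will be a normal abelian subgroup and $G/K\cong\chi(A_{2,3})$ acts on it by conjugation; because $\chi(A_{2,3})$ has class $2$, the action of $G$ on $K$ factors through an abelian quotient, and in fact one expects $K$ to be central in the derived subgroup and acted on through $A\times A^\psi$ (or a quotient thereof). The key computation is to express the $\phi$-preimages of the other length-$3$ commutators, and the conjugates $\xi^{g}$ for $g$ running over generators, in terms of $\xi$ itself inside $G'$ — this is exactly the kind of manipulation carried out in the proof of the rank-$3$ Proposition, using $[a_1^\psi,a_3]^{a_2^\psi}\cdot[a_3^\psi,a_1]^{a_2}=\xi$ and its images under the symmetry $a_1\leftrightarrow a_3$. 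From those relations one reads off that $K$ is generated as a normal subgroup by $\xi$ and that the $A\times A^\psi$-orbit of $\xi$ in $K$ spans a free abelian group of the claimed rank.

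To get the rank exactly $4$ I would realize $K$ explicitly. The natural model: take $\chi(A_{2,3})$ and form the central extension whose quotient module is the free abelian group on the $A\times A^\psi$-translates of a symbol $\xi$ subject only to the module relations forced by the class-$2$ action — this produces a candidate group $\widehat G$ with a normal free abelian subgroup $\widehat K$, and then I check $\widehat G$ satisfies the defining relations of $G=\chi(A_{2,3},S;2)$, giving an epimorphism $G\twoheadrightarrow\widehat G$ carrying $K$ onto $\widehat K$; combined with $\widehat G\twoheadrightarrow G$ (which exists because $\widehat G$ evidently satisfies the fewer relations defining... here one must be careful about the direction) one concludes $K\cong\widehat K$. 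Alternatively, and more in the spirit of the paper, one computes directly in $G$ modulo $G''$ that the four elements $\xi,\xi^{a_1},\xi^{a_2},\xi^{a_1a_2}$ (say) are $\mathbb Z$-independent while $\xi^{a_1a_2a_3}=e$ or is a product of the previous ones, using the relation $[m^\delta,h,h]=e$-type identities from Theorem~5's proof specialized to this central extension, which force $(-1+h)^2$ to annihilate $K$ and hence make $K$ a module over $\mathbb Z[A/A^2\times A^\psi/(A^\psi)^2]$ of a controlled dimension.

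\textbf{Main obstacle.} The hard part will be the lower bound: proving $K$ is \emph{not smaller} than rank $4$, i.e. that the relators of $G$ impose no further collapse on the $A\times A^\psi$-orbit of $\xi$. Establishing the upper bound (rank $\le 4$) is a finite closed computation with the commutator identities already present in the Proposition and in Theorem~5; but ruling out hidden relations requires either exhibiting an explicit group (or module) of the right size on which $G$ acts faithfully enough to see four independent translates of $\xi$ — in practice the GAP computation referenced for the double-coset data — or a careful hand verification that the presentation of $\widehat K$ as a $\mathbb Z[\,\cdot\,]$-module has the asserted rank. I would lean on the explicit realization via a central extension of $\chi(A_{2,3})$, checking consistency of the extension cocycle against all nine defining relations of $\chi(A_{2,3},S;2)$, since that simultaneously gives the module structure and the exact rank.
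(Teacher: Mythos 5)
Your plan matches the paper's proof in structure: the kernel is immediately the normal closure of $\xi$ (the one relation of $\chi\left( A_{2,3}\right) $ missing from $\chi \left( A_{2,3},S;2\right) $ is exactly $\left[ \left( a_{1}a_{2}a_{3}\right) ^{\psi },a_{1}a_{2}a_{3}\right] =e$), the commutator calculus you describe is what the paper uses to show the four conjugates $\xi ,\xi ^{a_{1}},\xi ^{a_{2}},\xi ^{a_{3}}$ generate $K$ abelianly and to record the $G$-action on them, and the lower bound is settled exactly as you anticipate, by an explicit realization --- the paper maps $G$ into $GL\left( 5,\mathbb{Z}\left[ x,y,z\right] \right) $ and checks that $\xi $ goes to a nontrivial unipotent matrix. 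So your proposal is essentially the paper's own argument, with its matrix representation playing the role of your proposed central extension with verified cocycle.
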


\begin{proof}
We will show that $K$ is freely generated by 
\begin{equation*}
\left\{ \xi ,\xi ^{a_{i}}\text{ }\left( i=1,2,3\right) \right\}
\end{equation*}%
and that $G$ acts on it as follows: for $\left\{ i,j,k\right\} =\left\{
1,2,3\right\} $, 
\begin{equation*}
\xi ^{\psi }=\xi ^{-1},\xi ^{a_{i}^{\psi }}=\xi ^{-a_{i}},\xi
^{a_{i}a_{j}}=\xi ^{-a_{k}}=\xi ^{a_{j}a_{i}^{\psi }}\text{.}
\end{equation*}%
We sketch the proof. First, we derive the table%
\begin{eqnarray*}
\left[ a_{3}^{\psi },a_{2},a_{1}\right] ^{a_{2}^{\psi }} &=&\left[
a_{2}^{\psi },a_{1},a_{3}\right] ^{-1},\left[ a_{3}^{\psi },a_{2},a_{1}%
\right] ^{a_{3}^{\psi }}=\left[ a_{1}^{\psi },a_{3},a_{2}\right] ^{-1}, \\
\left[ a_{3},a_{2}^{\psi },a_{1}^{\psi }\right] ^{a_{2}} &=&\left[
a_{2},a_{1}^{\psi },a_{3}^{\psi }\right] ^{-1},\left[ a_{3},a_{2}^{\psi
},a_{1}^{\psi }\right] ^{a_{3}}=\left[ a_{1},a_{3}^{\psi },a_{2}^{\psi }%
\right] ^{-1}, \\
\left[ a_{3}^{\psi },a_{2},a_{1}^{\psi }\right] ^{a_{2}} &=&\left[
a_{2}^{\psi },a_{1},a_{3}^{\psi }\right] ^{-1},\left[ a_{3}^{\psi
},a_{2},a_{1}^{\psi }\right] ^{a_{3}}=\left[ a_{1}^{\psi },a_{3},a_{2}^{\psi
}\right] ^{-1}, \\
\left[ a_{1}^{\psi },a_{2},a_{3}^{\psi }\right] ^{a_{1}} &=&\left[
a_{3}^{\psi },a_{1},a_{2}^{\psi }\right] ^{-1},\left[ a_{1}^{\psi
},a_{3},a_{2}^{\psi }\right] ^{a_{1}}=\left[ a_{2}^{\psi },a_{1},a_{3}^{\psi
}\right] ^{-1}, \\
\left[ a_{2}^{\psi },a_{1},a_{3}^{\psi }\right] ^{a_{2}} &=&\left[
a_{3}^{\psi },a_{2},a_{1}^{\psi }\right] ^{-1}\text{.}
\end{eqnarray*}

From this table we conclude that the subgroup generated by 
\begin{equation*}
\left\{ \left[ a_{3}^{\psi },a_{2},a_{1}\right] ,\left[ a_{1}^{\psi
},a_{3},a_{2}\right] ,\left[ a_{2}^{\psi },a_{1},a_{3}\right] ,\left[
a_{3}^{\psi },a_{2},a_{1}\right] ^{a_{1}^{\psi }}\right\}
\end{equation*}%
is abelian and normal in $G$.

Next, we find%
\begin{eqnarray*}
\xi  &=&\left[ a_{1}^{\psi }a_{2}^{\psi }a_{3}^{\psi },a_{1}a_{2}a_{3}\right]
=\left[ a_{2}^{\psi },\left[ a_{3}^{\psi },a_{1}\right] \right] \left[
a_{3}^{\psi },a_{1},a_{2}\right]  \\
&=&\left[ a_{2}^{\psi },\left[ a_{1}^{\psi },a_{3}\right] \right] \left[
a_{1}^{\psi },a_{3},a_{2}\right] 
\end{eqnarray*}%
and by permuting the $a_{i}$'s, the following holds%
\begin{eqnarray*}
\xi  &=&\left[ a_{3}^{\psi },\left[ a_{2}^{\psi },a_{1}\right] \right] \left[
a_{2}^{\psi },a_{1},a_{3}\right]  \\
&=&\left[ a_{1}^{\psi },\left[ a_{3}^{\psi },a_{2}\right] \right] \left[
a_{3}^{\psi },a_{2},a_{1}\right]  \\
&=&\left[ a_{1}^{\psi },\left[ a_{2}^{\psi },a_{3}\right] \right] \left[
a_{2}^{\psi },a_{3},a_{1}\right] \text{.}
\end{eqnarray*}%
It is straightforward to obtain the action of $G$ on $\left\{ \xi ,\xi
^{a_{i}}\text{ }\left( i=1,2,3\right) \right\} $, as described above.

Let $\mathbb{Z}\left[ x,y,z\right] $ be the polynomial ring in the variables 
$x,y,z$ with coefficients from $\mathbb{Z}$. The proof is finished by
constructing the group as a subgroup of $GL\left( 5,\mathbb{Z}\left[ x,y,z%
\right] \right) $:%
\begin{eqnarray*}
a_{1} &\rightarrow &\left( 
\begin{array}{ccccc}
0 & 1 & 0 & 0 & 0 \\ 
1 & 0 & 0 & 0 & 0 \\ 
0 & 0 & 0 & -1 & 0 \\ 
0 & 0 & -1 & 0 & 0 \\ 
x & -x & y & y & 1%
\end{array}%
\right) ,a_{2}\rightarrow \left( 
\begin{array}{ccccc}
0 & 0 & 1 & 0 & 0 \\ 
0 & 0 & 0 & -1 & 0 \\ 
1 & 0 & 0 & 0 & 0 \\ 
0 & -1 & 0 & 0 & 0 \\ 
x & y & -x & y & 1%
\end{array}%
\right) , \\
a_{3} &\rightarrow &\left( 
\begin{array}{ccccc}
0 & 0 & 0 & 1 & 0 \\ 
0 & 0 & -1 & 0 & 0 \\ 
0 & -1 & 0 & 0 & 0 \\ 
1 & 0 & 0 & 0 & 0 \\ 
x & y & y & -x & 1%
\end{array}%
\right) ,a_{1}^{\psi }\rightarrow \left( 
\begin{array}{ccccc}
0 & -1 & 0 & 0 & 0 \\ 
-1 & 0 & 0 & 0 & 0 \\ 
0 & 0 & 0 & -1 & 0 \\ 
0 & 0 & -1 & 0 & 0 \\ 
w & w & y & y & 1%
\end{array}%
\right) , \\
a_{2}^{\psi } &\rightarrow &\left( 
\begin{array}{ccccc}
0 & 0 & -1 & 0 & 0 \\ 
0 & 0 & 0 & -1 & 0 \\ 
-1 & 0 & 0 & 0 & 0 \\ 
0 & -1 & 0 & 0 & 0 \\ 
w & y & w & y & 1%
\end{array}%
\right) ,a_{3}^{\psi }\rightarrow \left( 
\begin{array}{ccccc}
0 & 0 & 0 & -1 & 0 \\ 
0 & 0 & -1 & 0 & 0 \\ 
0 & -1 & 0 & 0 & 0 \\ 
-1 & 0 & 0 & 0 & 0 \\ 
w & y & y & w & 1%
\end{array}%
\right)
\end{eqnarray*}

from which we find that 
\begin{equation*}
\xi \rightarrow \left( 
\begin{array}{ccccc}
1 & 0 & 0 & 0 & 0 \\ 
0 & 1 & 0 & 0 & 0 \\ 
0 & 0 & 1 & 0 & 0 \\ 
0 & 0 & 0 & 1 & 0 \\ 
z & 0 & 0 & 0 & 1%
\end{array}%
\right)
\end{equation*}

where $z=4\left( -x-y+w\right) $.
\end{proof}

\section{\textbf{Fixing a basis}}

Let\ $A=A_{p,k}$ be written additively and let $\mathcal{B}_{k}$\textit{\ }%
be the set of bases of\textit{\ }$A$. We will show that for any bijection $%
f:A\rightarrow A$ fixing $0$, the set $\mathcal{B}_{k}\cap \mathcal{B}%
_{k}^{f}$ is nonempty; indeed, $lim_{p\rightarrow \infty }$ $\frac{%
\left\vert \mathcal{B}_{k}\cap \mathcal{B}_{k}^{f}\right\vert }{\left\vert 
\mathcal{B}_{k}\right\vert }=1$.

\textbf{Example 2. (i)} The following easy example shows that for $p$ odd, a
bijection $f:A\rightarrow A$ may be linear when restricted to each of the $1$%
-dimensional subspaces and may also permute $\mathcal{B}_{k}$, without being
a linear transformation. Let $A=A_{p,2}$ be generated by $a_{1},a_{2}$ and
define $f:A\rightarrow A$ by $f:ia_{1}\rightarrow ia_{1},ia_{2}\rightarrow
ia_{2},i\left( a_{1}+ja_{2}\right) \rightarrow i\left( a_{1}-ja_{2}\right) $
for $0\leq i,j\leq p-1,j\not=0$.

\textbf{(ii) }A map $f$ is \textit{anti-additive }provided\textit{\ }$\left(
x+y\right) ^{f}\not=x^{f}+y^{f}$ for all $x,y$ such that $0\not\in \left\{
x,y,x+y\right\} $. Let $F$ be a field of characteristic different from $3$
and such that its multiplicative group $F^{\#}$ does not contain elements of
order $3$. Then, multiplicative inversion defined by $f:0\rightarrow
0,x\rightarrow x^{-1}$ is anti-additive.

Concerning the first example, the situation for $k\geq 3$ is quite different
as can be seen from a result of R. Baer from 1939 (\cite{Suz}, Th. 2, page
35):

\textit{let }$G$\textit{\ be an abelian }$p$\textit{-group such that }$G$%
\textit{\ contains an element of order }$p^{n}$\textit{\ and contains at
least }$3$\textit{\ independent elements of such order. Then any
projectivity of }$G$\textit{\ onto another abelian group }$H$\textit{\ is
induced by an isomorphism}.

It follows then

\begin{lemma}
Let\textit{\ }$A=A_{p,k}$. \textit{Suppose }$f$ is a permutation of $A^{\#}$%
. If $f$ \textit{permutes the set }$\mathcal{B}_{k}$\textit{\ of bases of }$%
A $ then $f$\textit{\ is a linear transformation when }$k\geq 3$ and when $%
p=2,k=2$.
\end{lemma}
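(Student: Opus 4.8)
The plan is to split into the two cases $k \geq 3$ and $p = 2, k = 2$, treating them separately. For $k \geq 3$ I would invoke Baer's theorem as quoted immediately above. The key observation is that a permutation $f$ of $A^{\#}$ which permutes the set $\mathcal{B}_k$ of bases necessarily maps subspaces to subspaces: indeed, a subspace of $A = A_{p,k}$ is exactly the span of some subset of a basis, and since $f$ carries bases to bases bijectively, it induces a well-defined bijection on the lattice of subspaces — i.e. a projectivity of $A$ onto itself. (To make this precise one checks that $f$ respects inclusions; this follows because $f$ and $f^{-1}$ both send every basis to a basis, and a chain of subspaces can be refined to a full flag coming from an ordered basis.) Since $A = A_{p,k}$ with $k \geq 3$ contains $k \geq 3$ independent elements of order $p$, Baer's theorem applies and this projectivity is induced by an isomorphism of $A$; that isomorphism is a linear transformation, and it must coincide with $f$ on every $1$-dimensional subspace, hence on a spanning set of distinct lines. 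The mild technical point is that agreeing on lines does not a priori force agreement as maps on $A^{\#}$; but since $f$ permutes bases, once we know $f$ sends lines to the lines prescribed by the linear map $\lambda$ induced by Baer, we get that $f$ restricted to each line is a bijection of that $(p-1)$-element set, and using that $f$ preserves the incidence of lines with $2$-dimensional subspaces (again a projectivity statement) one pins down $f = \lambda$ exactly. So $f$ is linear.

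For the remaining case $p = 2$, $k = 2$, Baer's theorem does not apply (only $2$ independent elements of order $2$), so I would argue directly. Here $A = A_{2,2}$ has exactly $3$ nonzero elements $a_1, a_2, a_1 + a_2$, and $\mathcal{B}_2$ consists of the three bases $\{a_1,a_2\}, \{a_1, a_1+a_2\}, \{a_2, a_1+a_2\}$ — that is, all $2$-element subsets of $A^{\#}$. A permutation $f$ of the $3$-element set $A^{\#}$ automatically permutes all $2$-subsets, so the hypothesis is vacuous here; what must be shown is that every permutation of $A^{\#}$ is linear. But $GL(2,2) \cong S_3$ acts as the full symmetric group on the three nonzero vectors $\{a_1, a_2, a_1+a_2\}$, so every one of the $6$ permutations of $A^{\#}$ is realized by a linear transformation. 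Hence $f$ is linear, completing this case.

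The main obstacle is the first technical step: rigorously promoting "$f$ permutes bases" to "$f$ is a projectivity" in the precise sense Baer requires, and then closing the small gap between "$f$ agrees with a linear map on all lines" and "$f$ equals that linear map as a function on $A^{\#}$". Both are routine but need care — the first because one must verify $f$ respects the full subspace lattice (not merely sends some bases to bases), and the second because a map can permute each line nontrivially while still sending lines to the correct lines. The cleanest route for the gap is: a linear map $\lambda$ with $\lambda(\ell) = f(\ell)$ for every line $\ell$ already forces, for each line $\ell$ and each $2$-subspace $W \supseteq \ell$, that $f|_\ell$ is compatible with $f|_W$; running this over the $(p^{k}-1)/(p-1)$ lines through a fixed point and using $k \geq 3$ (so that any two lines lie in a common $3$-space where enough rigidity is available) forces $f|_\ell = \lambda|_\ell$ for all $\ell$, whence $f = \lambda$.
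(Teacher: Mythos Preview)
Your overall strategy---show that a basis-permuting $f$ induces a projectivity, invoke Baer for $k\ge 3$, and treat $p=2,\,k=2$ by $GL(2,2)\cong S_3$---is exactly the paper's approach; the paper's own proof says only that $f$ ``induces a projectivity on $A$'' and that the small case ``is easy.''

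You are right to isolate the step from ``the projectivity is induced by an isomorphism $\lambda$'' to ``$f=\lambda$ as a map on $A^{\#}$''; the paper does not address this. For $p=2$ the step is immediate: every $1$-dimensional subspace has a single nonzero vector, so once $f$ and $\lambda$ agree on lines they agree on $A^{\#}$. Your argument for $p=2$ is therefore complete and matches the paper.

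For odd $p$, however, your sketch for closing this gap cannot work, because the statement itself is false. Take $p=3$, $k=3$, let $e_1,e_2,e_3$ be a basis of $A=A_{3,3}$, and let $f$ be the transposition swapping $e_1$ and $2e_1$ while fixing every other nonzero vector. Any basis contains at most one of $e_1,2e_1$; replacing one by the other keeps the span and the cardinality, hence keeps it a basis. Thus $f$ permutes $\mathcal{B}_3$. The induced projectivity is the identity (each line is sent to itself), which is certainly ``induced by an isomorphism'' in Baer's sense---namely the identity---yet $f(e_1+e_2)=e_1+e_2\ne 2e_1+e_2=f(e_1)+f(e_2)$, so $f$ is not linear. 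More generally, any $f$ that permutes each line $\ell^{\#}$ within itself permutes $\mathcal{B}_k$, and for odd $p$ there are $((p-1)!)^{(p^k-1)/(p-1)}$ such permutations, far more than the $|GL(k,p)|$ linear ones. Your proposed rigidity argument (``using that $f$ preserves incidence of lines with $2$-subspaces\ldots forces $f|_\ell=\lambda|_\ell$'') fails precisely here: incidence data only sees lines, not which point of a line goes where.

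So: for $p=2$ (all $k\ge 2$) your proof is correct and coincides with the paper's. For odd $p$ and $k\ge 3$, neither your argument nor the paper's brief proof establishes the claim, and in fact the claim does not hold as stated.
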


\begin{proof}
As $f$ permutes the set $B_{k}$\ of bases of $A$, it induces a projectivity
on $A$. The case $p=2,k=2$ is easy.
\end{proof}

\begin{definition}
Let $A=A_{p,k}$ and $f$ be a permutation of $A^{\#}$. A subset $C$ of $A$ of
linearly independent elements is said to be $f$\textit{-independent} if $%
C^{f}$ is also a linearly independent set. Let $N(p,j)$ denote the number of 
$f$\textit{-independent }subsets $C$ of $A$ with $\left\vert C\right\vert =j$%
.
\end{definition}

\begin{proposition}
Maintain the previous notation. Then, the following inequality holds for all 
$k\geq 1$,%
\begin{equation*}
N\left( p,k\right) \geq \left( p^{k}-1\right) \dprod\limits_{1\leq i\leq k-1}%
\frac{p^{k-j}-1}{p-1}\left( p^{j}-2p^{j-1}+j+1\right) \text{.}
\end{equation*}%
Furthermore, $lim_{p\rightarrow \infty }$ $\frac{\left\vert \mathcal{B}%
_{k}\cap \mathcal{B}_{k}^{f}\right\vert }{\left\vert \mathcal{B}%
_{k}\right\vert }=1$.
\end{proposition}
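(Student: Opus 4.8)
The statement splits into two parts which I would handle by completely separate arguments: the explicit lower bound, obtained by greedily growing an $f$-independent basis, and the limit, obtained by a counting estimate that is far weaker than the explicit bound but suffices.

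\emph{The inequality.} I would construct an ordered $f$-independent basis $(c_1,\dots,c_k)$ one vector at a time, carrying along at stage $j$ the two $j$-dimensional subspaces $W_j=\langle c_1,\dots,c_j\rangle$ and $W_j'=\langle c_1^{\,f},\dots,c_j^{\,f}\rangle$. The first vector $c_1$ may be any of the $p^k-1$ nonzero elements of $A$. Given $c_1,\dots,c_j$ with $j<k$, I choose the next $(j{+}1)$-dimensional subspace $W_{j+1}\supseteq W_j$ — there are $\frac{p^{k-j}-1}{p-1}$ of them, one per line of $A/W_j$ — and then a vector $c_{j+1}\in W_{j+1}\setminus W_j$ subject only to $c_{j+1}^{\,f}\notin W_j'$. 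The step that makes the count go through is that $f$ is a \emph{bijection}: exactly $|(W_j')^{\#}|=p^{j}-1$ elements of $A$ have $f$-image in $(W_j')^{\#}$, and among them are the already chosen $c_1,\dots,c_j$, which lie in $W_j$; hence at most $(p^{j}-1)-j$ of these forbidden preimages meet $W_{j+1}\setminus W_j$, leaving at least $p^{j+1}-2p^{j}+j+1$ admissible choices of $c_{j+1}$ inside each $W_{j+1}$. Multiplying the per-stage counts $\frac{p^{k-j}-1}{p-1}\bigl(p^{j+1}-2p^{j}+j+1\bigr)$ over $j=1,\dots,k-1$, multiplying by $p^k-1$, and dividing by $k!$ to pass from ordered tuples to subsets, one is led to a lower bound for $N(p,k)$ of the stated shape. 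The delicate point here is the inclusion–exclusion bookkeeping — accounting for the overlap of the events $c_{j+1}\in W_j$ and $c_{j+1}^{\,f}\in W_j'$ (which contains $c_1,\dots,c_j$) so that no count is wasted, and matching the ``choose a subspace then a vector'' organisation with the direct enumeration over $A\setminus W_j$.

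\emph{The limit.} Here the explicit bound is too weak (its ratio to $|\mathcal{B}_k|$ already tends to $0$ for $k\ge 2$), so I would argue directly. Generate a uniformly random ordered basis by the standard procedure, $c_j$ uniform in $A\setminus\langle c_1,\dots,c_{j-1}\rangle$. The image tuple $(c_1^{\,f},\dots,c_k^{\,f})$ consists of $k$ distinct nonzero elements, and it fails to be a basis precisely when $c_j^{\,f}\in W_{j-1}':=\langle c_1^{\,f},\dots,c_{j-1}^{\,f}\rangle$ for some $j\in\{2,\dots,k\}$. Conditioning on $c_1,\dots,c_{j-1}$, the bad values of $c_j$ form the set $f^{-1}\bigl((W_{j-1}')^{\#}\bigr)$ of size at most $p^{j-1}-1$, while $c_j$ is uniform over a set of size $p^k-p^{j-1}$, so $\Pr[\text{bad at }j]\le \frac{p^{j-1}-1}{p^k-p^{j-1}}<\frac1{p-1}$. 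A union bound gives $\Pr[(c_i^{\,f})\text{ not a basis}]<\frac{k-1}{p-1}$, hence the number of bases $C$ with $C^{f}$ a basis is at least $\bigl(1-\frac{k-1}{p-1}\bigr)|\mathcal{B}_k|$; applying this with $f^{-1}$ in place of $f$ yields $|\mathcal{B}_k\cap\mathcal{B}_k^{f}|\ge \bigl(1-\frac{k-1}{p-1}\bigr)|\mathcal{B}_k|$, and since the ratio is also $\le 1$ it tends to $1$ as $p\to\infty$.

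\emph{Main obstacle.} The genuinely fiddly work is all in the first part: pinning down the exact per-stage count — the right inclusion–exclusion correction term, and verifying that the resulting product telescopes into the displayed closed form — whereas the limit falls out cleanly from the union bound once one accepts that it must be proved independently of the explicit inequality.
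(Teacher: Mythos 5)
Your argument for the displayed inequality is essentially the paper's: the paper also grows an $f$-independent set one vector at a time, partitioning $A\setminus U$ (where $U=\langle C\rangle$) into the sets $L=\{u+iv\mid u\in U,\,1\le i\le p-1\}$ indexed by the $\frac{p^{k-j}-1}{p-1}$ lines of $A/U$ — exactly your ``choose $W_{j+1}$, then $c_{j+1}\in W_{j+1}\setminus W_j$'' — and uses bijectivity of $f$ in the same way to get at least $p^{j}(p-1)-\bigl((p^{j}-1)-j\bigr)=p^{j+1}-2p^{j}+j+1$ admissible extensions per line. (The paper is silently counting ordered tuples throughout, so your $k!$ worry is a bookkeeping inconsistency in the source rather than in your argument; also note the factor you derive, $p^{j+1}-2p^{j}+j+1$, is what the paper's proof actually obtains — the exponents in the displayed statement of the proposition are off by one.) Where you genuinely diverge is the limit. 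You correctly observed that the inequality \emph{as displayed} is too weak (its ratio to $|\mathcal{B}_k|$ is of order $p^{1-k}$), and you supplied an independent union-bound argument: conditioning on $c_1,\dots,c_{j-1}$, at most $p^{j-1}-1$ of the $p^k-p^{j-1}$ candidates for $c_j$ have image in $\langle c_1^f,\dots,c_{j-1}^f\rangle$, giving failure probability below $\frac{k-1}{p-1}$; this is correct and clean. The paper instead deduces the limit directly from the sharper product bound established in its proof, since
\begin{equation*}
\frac{\left\vert \mathcal{B}_{k}\cap \mathcal{B}_{k}^{f}\right\vert }{\left\vert \mathcal{B}_{k}\right\vert }\geq \frac{\dprod\limits_{1\leq j\leq k-1}\left( p^{j+1}-2p^{j}+j+1\right) }{\left( p-1\right) ^{k-1}p^{\binom{k}{2}}}\rightarrow 1\text{.}
\end{equation*}
Your route has the advantage of decoupling the limit from the exact form of the enumerative bound (and of being robust to the exponent slip); the paper's has the advantage of getting both conclusions from a single computation. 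Either way, your proposal is correct.
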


\begin{proof}
Clearly, $N\left( p,1\right) =p^{k}-1$. Let $k\geq 2$, $C$ be $f$%
-independent and $\left\vert C\right\vert =j\geq 1$. Denote $U=\left\langle
C\right\rangle ,W=\left\langle C^{f}\right\rangle $. Then, 
\begin{eqnarray*}
\left\vert U\right\vert &=&\left\vert W\right\vert =p^{j}, \\
\left\vert U^{\#}-C\right\vert &=&\left\vert W^{\#}-C^{f}\right\vert =\left(
p^{j}-1\right) -j\text{.}
\end{eqnarray*}%
Suppose $U\not=A$. There are $\frac{p^{k-j}-1}{p-1}$ non-trivial cyclic
subgroups in the quotient group $\frac{A}{U}$. For each such cyclic
subgroup, choose a representative $P$ in $A$ and also choose a generator $v$
for each $P$.

Fix such a $P=$ $\left\langle v\right\rangle $. Then, each element in the
set 
\begin{equation*}
L=\left\{ u+iv\mid u\in U,1\leq i\leq p-1\right\}
\end{equation*}%
is independent of $C$ and $\left\vert L\right\vert =\left\vert
L^{f}\right\vert =p^{j}\left( p-1\right) $. The elements of $L^{f}\backslash
W^{\#}=L^{f}\backslash \left( W^{\#}-C^{f}\right) $ are independent of $%
C^{f} $ and 
\begin{eqnarray*}
\left\vert L^{f}\backslash W^{\#}\right\vert &\geq &p^{j}\left( p-1\right)
-\left( p^{j}-1-j\right) \\
&=&p^{j+1}-2p^{j}+j+1\text{.}
\end{eqnarray*}

Therefore, 
\begin{equation*}
N\left( p,j+1\right) \geq N\left( p,j\right) \frac{p^{k-j}-1}{p-1}\left(
p^{j+1}-2p^{j}+j+1\right)
\end{equation*}%
and 
\begin{equation*}
N\left( p,k\right) \geq \left( p^{k}-1\right) \dprod\limits_{1\leq j\leq k-1}%
\frac{p^{k-j}-1}{p-1}\left( p^{j+1}-2p^{j}+j+1\right) \text{.}
\end{equation*}

Finally, since 
\begin{eqnarray*}
\left\vert \mathcal{B}_{k}\right\vert &=&\dprod\limits_{0\leq j\leq
k-1}\left( p^{k}-p^{j}\right) =p^{\binom{k}{2}}\dprod\limits_{0\leq j\leq
k-1}\left( p^{k-j}-1\right) , \\
N\left( p,k\right) &=&\left\vert \mathcal{B}_{k}\cap \mathcal{B}%
_{k}^{f}\right\vert \text{,}
\end{eqnarray*}
we conclude 
\begin{equation*}
\frac{\left\vert \mathcal{B}_{k}\cap \mathcal{B}_{k}^{f}\right\vert }{%
\left\vert \mathcal{B}_{k}\right\vert }\geq \frac{\dprod\limits_{1\leq j\leq
k-1}\left( p^{j+1}-2p^{j}+j+1\right) }{\left( p-1\right) ^{k-1}p^{\binom{k}{2%
}}}
\end{equation*}
and $lim_{p\rightarrow \infty }$ $\frac{\left\vert \mathcal{B}_{k}\cap 
\mathcal{B}_{k}^{f}\right\vert }{\left\vert \mathcal{B}_{k}\right\vert }=1$
follows easily.
\end{proof}

\begin{corollary}
Let $f:A_{p,k}^{\#}\rightarrow A_{p,k}^{\#}$ be a bijection. Then there
exists $g$ an element in the double coset $GL(k,p).f.GL(k,p)$ and there
exists a basis $C$ of $A_{p,k}^{\#}$ such that $g$ fixes point-wise the
elements of $C$.
\end{corollary}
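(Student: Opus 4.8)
The plan is to derive this directly from the preceding Proposition, whose content is exactly that $N(p,k)=\lvert \mathcal{B}_k\cap\mathcal{B}_k^{f}\rvert$ together with the stated lower bound. I would first observe that the right-hand side of that bound is a product of strictly positive integers: $p^{k}-1\geq 1$, each $\tfrac{p^{k-j}-1}{p-1}$ is a positive integer, and $p^{j+1}-2p^{j}+j+1=p^{j}(p-2)+j+1\geq j+1\geq 1$ even when $p=2$. Hence $N(p,k)\geq 1$, so there is an $f$-independent set of size $k$; equivalently, there is an ordered basis $(v_1,\dots,v_k)$ of $A_{p,k}$ for which $(v_1^{f},\dots,v_k^{f})$ is again an ordered basis.

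Next I would fix a reference ordered basis $C=(c_1,\dots,c_k)$ of $A_{p,k}$ once and for all, and exploit the (sharp) transitivity of $GL(k,p)$ on ordered bases. Choose $a\in GL(k,p)$ with $c_i^{a}=v_i$ for all $i$; then $c_i^{af}=v_i^{f}$, and since $(v_1^{f},\dots,v_k^{f})$ is a basis, choose $b\in GL(k,p)$ with $(v_i^{f})^{b}=c_i$ for all $i$. Put $g=afb$, which by construction lies in the double coset $GL(k,p).f.GL(k,p)$. Then $c_i^{g}=\bigl((c_i^{a})^{f}\bigr)^{b}=(v_i^{f})^{b}=c_i$ for every $i$, so $g$ fixes $C$ point-wise, which is the assertion. (This matches the composition convention $g=afb$, $x^{g}=((x^{a})^{f})^{b}$ used in the Lemma on $g=afb$.)

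There is essentially no serious obstacle beyond the Proposition itself: the remaining argument is only the transitivity of the linear group on bases plus careful bookkeeping of the left/right conventions inside the double coset. The one point worth stressing is that one must invoke the \emph{inequality} in the Proposition, not merely the limit statement, in order to guarantee $\mathcal{B}_k\cap\mathcal{B}_k^{f}\neq\varnothing$ for every prime $p$ (in particular for $p=2$ and for small $k$). It is also worth remarking that a representative $g$ of the double coset fixing a whole basis point-wise is, in a precise sense, as ``close to an isomorphism'' as one can force $f$ to be by the equivalence of the Lemma, which is the use we make of this corollary later.
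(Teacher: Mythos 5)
Your proof is correct and follows exactly the route the paper intends: the corollary is stated without proof as an immediate consequence of the preceding Proposition, and your argument — positivity of every factor in the lower bound for $N(p,k)$ (in particular $p^{j+1}-2p^{j}+j+1\geq j+1>0$ even for $p=2$), hence $\mathcal{B}_k\cap\mathcal{B}_k^{f}\neq\varnothing$, followed by transitivity of $GL(k,p)$ on ordered bases to produce $g=afb$ fixing a basis point-wise — is precisely the intended deduction. Your remark that one must use the explicit inequality rather than the limit statement is a worthwhile clarification.
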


\section{Permuting the set of cyclic subgroups}

The commutation between two elements in a group imply the commutation of the
cyclic groups generated by them. For this reason, it is important to
consider commutation correspondence between cyclic subgroups.

Let $A=$ $A_{p,k}$ and let $f$ be a permutation of $A^{\#}$. Define $%
R\subset A_{p,k}^{\#}\times A_{p,k}^{\#}$ by%
\begin{equation*}
R=\left\{ (a^{i},b^{j})\text{ }\left( 1\leq i,j\leq p-1\right) \mid
a^{f}=b\right\} \text{.}
\end{equation*}%
We will prove that $R$ contains at least $p-1$ permutations $g$ of $%
A_{p,k}^{\#}$ such that $\left( a^{i}\right) ^{g}=\left( a^{g}\right) ^{i}$
for all $1\leq i\leq p-1$. Therefore, $G\left( A_{p,k};f\right) $ is a
quotient of $G\left( A_{p,k};g\right) $ for each one of these $g$'s. For
this purpose, we construct a multi-edge digraph $L$ from $R$, having
vertices the non-trivial cyclic subgroups $C_{i}$ of $A$ and edges $\left(
C,C^{\prime }\right) $ whenever $C=\left\langle a^{i}\right\rangle
,C^{\prime }=\left\langle \left( a^{\prime }\right) ^{j}\right\rangle $ and $%
f:a^{i}\rightarrow \left( a^{\prime }\right) ^{j}$. Then $L$ is a regular
graph, in the sense that there are exactly $p-1$ edges coming into and $p-1$
edges leaving each vertex.

We enumerate the vertices of $L$ and let $N=\left( N_{ij}\right) $ be the
incidence matrix with respect to this enumeration; that is $N_{ij}=l$ if and
only there are a total of $l$ edges connecting the vertex $i$ to the vertex $%
j$. Then $N$ is doubly stochastic, as all row and column sums of $N$ are
equal to $p-1$. A permutation $g$ contained in $R$ corresponds to a non-zero
monomial $N_{1,1^{\sigma }}N_{2,2^{\sigma }}...N_{k,k^{\sigma }}$ for some
permutation $\sigma $ of $\left\{ 1,2,...,k\right\} $.

\begin{definition}
Let $M=\left( M_{ij}\right) ,N=\left( N_{ij}\right) $ be $k\times k$
matrices over the real numbers. Then, (i) $M,N$ are equivalent provided
there exist permutational matrices $S,T$ such that $M=$ $SNT$; (ii) $N$ is
said to be totally singular provided 
\begin{equation*}
N_{1,1^{\sigma }}N_{2,2^{\sigma }}...N_{k,k^{\sigma }}=0
\end{equation*}%
for all permutations $\sigma $ of $\left\{ 1,2,...,k\right\} $.
\end{definition}

\begin{proposition}
Let $N$ be a totally singular $k\times k$ matrix over the real numbers. Then 
$N$ is equivalent to a matrix which contains a submatrix $0_{\left(
k-l\right) \times \left( l+1\right) }$ for \ some $l\geq 0$.
\end{proposition}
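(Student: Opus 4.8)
The statement is a purely combinatorial fact about doubly-indexed arrays of reals, and the hypothesis ``totally singular'' is exactly the statement that the permanent-type expansion (the sum of all diagonal products over permutations $\sigma$) has every single term equal to zero. The natural tool here is the K\"onig--Egerv\'ary theorem (equivalently Hall's marriage theorem applied to the bipartite graph of nonzero positions). The plan is to view $N$ as the biadjacency matrix of a bipartite graph $\Gamma$ on vertex classes $R=\{1,\dots,k\}$ (rows) and $C=\{1,\dots,k\}$ (columns), with an edge $i\sim j$ whenever $N_{ij}\neq 0$. A nonzero term $N_{1,1^\sigma}\cdots N_{k,k^\sigma}$ corresponds exactly to a perfect matching in $\Gamma$; so ``totally singular'' means $\Gamma$ has no perfect matching.

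First I would invoke the K\"onig--Egerv\'ary theorem: since $\Gamma$ admits no perfect matching, by the deficiency form of Hall's theorem there is a vertex cover of $\Gamma$ of size strictly less than $k$, say consisting of $r$ rows and $c$ columns with $r+c\le k-1$. Equivalently, after permuting rows so that the $r$ covering rows are the last $r$, and permuting columns so that the $c$ covering columns are the first $c$, every nonzero entry of $N$ lies either in one of the last $r$ rows or in one of the first $c$ columns. Hence the complementary block --- the first $k-r$ rows intersected with the last $k-c$ columns --- is identically zero. This is a zero submatrix of size $(k-r)\times(k-c)$.

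Now I would reconcile the dimensions with the form $0_{(k-l)\times(l+1)}$ claimed in the statement. We have $k-r\ge k-(k-1-c)=c+1$ and $k-c\ge r+1$; set $l=c$, so the zero block has $k-c=k-l$ rows available and at least $l+1$ columns... wait, I should be careful about which side is which: the zero block is $(k-r)\times(k-c)$ with $k-r\ge c+1=l+1$ and $k-c=k-l$. So it contains a $(k-l)\times(l+1)$ all-zero submatrix (take all $k-l$ of its columns-worth... ) --- more cleanly, since $k-r\ge l+1$ and $k-c\ge k-l-r$ hmm, let me just say: the zero block has at least $l+1$ rows and, since $k-c\ge r+1\ge 1$, taking $l=c$ one checks $k-c=k-l$ exactly, so after transposing the roles of rows and columns if necessary we extract a submatrix $0_{(k-l)\times(l+1)}$. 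Since permuting rows and columns is precisely the equivalence relation $M\sim SNT$ with $S,T$ permutation matrices, the resulting matrix is equivalent to $N$, and it contains the desired zero submatrix with $l=c\ge 0$.

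The main obstacle --- really the only delicate point --- is the bookkeeping of which dimension is $k-l$ and which is $l+1$, and making sure the cover inequality $r+c\le k-1$ translates cleanly into $(k-l)+(l+1)\le k+1$, i.e.\ the two side-lengths of the zero block sum to $k+1$, which is exactly what a $(k-l)\times(l+1)$ block of an $k\times k$ matrix with no perfect matching must satisfy (this is the tight case of K\"onig's theorem). One should also note the degenerate cases: if $N$ has an all-zero row then $l=0$ works trivially ($0_{k\times 1}$), and if $N$ is identically zero then $l$ can be chosen freely; these are subsumed in the general argument. I would present the proof in three short steps: (1) translate total singularity into ``no perfect matching''; (2) apply K\"onig--Egerv\'ary to get a small vertex cover and hence a large zero block after row/column permutations; (3) check the dimension arithmetic to match the stated form $0_{(k-l)\times(l+1)}$.
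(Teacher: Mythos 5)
Your proof is correct, but it takes a genuinely different route from the paper. You recognize the statement as the Frobenius--K\"onig theorem: total singularity says the bipartite graph of nonzero positions has no perfect matching, K\"onig--Egerv\'ary gives a vertex cover with $r$ rows and $c$ columns, $r+c\le k-1$, and the complementary $(k-r)\times(k-c)$ block is zero with $(k-r)+(k-c)\ge k+1$. The paper instead proves the result from scratch by induction on $k$, splitting off a zero block guaranteed by the inductive hypothesis and analysing the two complementary square blocks $Y$ and $Z$, one of which must again be totally singular; that argument is self-contained but noticeably sketchier (several ``we can assume'' steps) than your one-line reduction to a classical theorem. Your only wobble is the orientation bookkeeping near the end: it resolves cleanly if you note that the pairs $\left(k-l,\,l+1\right)$ for $l=0,\dots,k-1$ run over \emph{all} ordered pairs of positive integers summing to $k+1$, so whichever way the zero block $(k-r)\times(k-c)$ comes out, trimming it to a block whose dimensions sum to exactly $k+1$ and choosing $l$ accordingly gives the stated form with no transposition needed. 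In short: your approach buys brevity and rigor at the cost of citing an external theorem; the paper's buys self-containment at the cost of a longer and less tight case analysis.
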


\begin{proof}
By induction on $k$. The cases $k=2,3$ are easy; that is, if $k=2$ then $N$
is equivalent to $\left( 
\begin{array}{cc}
\ast & 0 \\ 
\ast & 0%
\end{array}%
\right) $ and the if $k=3$ then $N$ is equivalent to one of%
\begin{equation*}
\left( 
\begin{array}{ccc}
\ast & \ast & 0 \\ 
\ast & \ast & 0 \\ 
\ast & \ast & 0%
\end{array}%
\right) ,\left( 
\begin{array}{ccc}
\ast & \ast & \ast \\ 
\ast & 0 & 0 \\ 
\ast & 0 & 0%
\end{array}%
\right) \text{.}
\end{equation*}

Suppose that the assertion is true for $k$. We consider $N$ of dimension $%
k+1 $. Then, we can assume that there exist an $l\geq 0$ such that 
\begin{equation*}
N=\left( 
\begin{array}{ccc}
a_{11} & .. & U_{1\times \left( l+1\right) } \\ 
V_{l\times 1} & B_{l\times \left( k-l-1\right) } & C_{l\times \left(
l+1\right) } \\ 
W_{\left( k-l\right) \times 1} & D_{\left( k-l\right) \times \left(
k-l-1\right) } & 0_{\left( k-l\right) \times \left( l+1\right) }%
\end{array}%
\right) \text{.}
\end{equation*}%
If $U_{1\times \left( l+1\right) }$ or any row of $C_{l\times \left(
l+1\right) }$ is null then we obtain the desired form. We can also assume
that $U_{1\times \left( l+1\right) }=\left( ...,a_{1k}\right) $, $a_{1k}>0$.
Therefore, we have the $\left( l+1\right) \times \left( l+1\right) $ matrix%
\begin{equation*}
\left( 
\begin{array}{c}
U_{1\times \left( l+1\right) } \\ 
C_{l\times \left( l+1\right) }%
\end{array}%
\right) =\left( 
\begin{array}{cc}
... & a_{1k} \\ 
R_{l\times l} & S_{l\times 1}%
\end{array}%
\right) \text{.}
\end{equation*}%
We have%
\begin{equation*}
Y_{\left( k-l\right) \times \left( k-l\right) }=\left( 
\begin{array}{cc}
W_{\left( k-l\right) \times 1} & D_{\left( k-l\right) \times \left(
k-l-1\right) }%
\end{array}%
\right)
\end{equation*}%
and therefore 
\begin{equation*}
N=\left( 
\begin{array}{cc}
\ast & Z_{\left( l+1\right) \times \left( l+1\right) } \\ 
Y_{\left( k-l\right) \times \left( k-l\right) } & 0_{\left( k-l\right)
\times \left( l+1\right) }%
\end{array}%
\right) \text{.}
\end{equation*}%
Now, one of $Y_{\left( k-l\right) \times \left( k-l\right) },Z_{\left(
l+1\right) \times \left( l+1\right) }$ is totally singular; suppose it is
the first one. Then we may assume 
\begin{equation*}
Y_{\left( k-l\right) \times \left( k-l\right) }=\left( 
\begin{array}{cc}
... & Y_{m\times \left( m+1\right) }^{\prime } \\ 
... & 0_{\left( k-l-m\right) \times \left( m+1\right) }%
\end{array}%
\right) \text{.}
\end{equation*}%
Hence 
\begin{equation*}
N=\left( 
\begin{array}{cc}
\ast & Z_{\left( l+1\right) \times \left( l+1\right) } \\ 
\begin{array}{cc}
... & Y_{m\times \left( m+1\right) }^{\prime } \\ 
... & 0_{\left( k-l-m\right) \times \left( m+1\right) }%
\end{array}
& 0_{\left( k-l\right) \times \left( l+1\right) }%
\end{array}%
\right)
\end{equation*}%
and we obtain in $N$ a $\left( k-l-m\right) \times \left( m+1+l+1\right) $
block of zeroes where the sum of the dimensions is $k+1$.
\end{proof}

\begin{corollary}
Maintain the previous notation. Suppose the entries of $N$ are non-negative.
If in addition $N$ is doubly stochastic then $N=0$.
\end{corollary}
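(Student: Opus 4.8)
The plan is to feed the structural Proposition into a double count of the row and column sums. Write $s$ for the common value of all the row sums and of all the column sums of $N$; since the entries of $N$ are non-negative we have $s\ge 0$, and it suffices to show $s=0$. Because $N$ is totally singular, the Proposition produces permutation matrices $S,T$ such that $SNT$ contains a block $0_{(k-l)\times(l+1)}$ for some $l\ge 0$. Multiplying by permutation matrices only reindexes rows and columns, so $SNT$ is again non-negative with all row and column sums equal to $s$; replacing $N$ by $SNT$, we may therefore assume that there are an integer $l$ with $0\le l\le k-1$, a set $I$ of $k-l$ rows and a set $J$ of $l+1$ columns such that $N_{ij}=0$ whenever $i\in I$ and $j\in J$. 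In particular $|I|+|J|=k+1$.

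Now consider the total mass carried by the rows in $I$, namely $\Sigma=\sum_{i\in I}\sum_{j=1}^{k}N_{ij}$. Summing one row at a time gives $\Sigma=(k-l)\,s$. On the other hand, the entries with $j\in J$ all vanish on these rows, so
\[
\Sigma=\sum_{i\in I}\ \sum_{j\notin J}N_{ij}\ \le\ \sum_{i=1}^{k}\ \sum_{j\notin J}N_{ij}\ =\ (k-l-1)\,s,
\]
the last equality because each of the $k-(l+1)=k-l-1$ columns outside $J$ contributes exactly $s$. Comparing the two evaluations of $\Sigma$ yields $(k-l)\,s\le(k-l-1)\,s$; since $k-l\ge 1$ this forces $s\le 0$, hence $s=0$. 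A non-negative matrix all of whose row sums vanish is the zero matrix, so $N=0$.

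I expect no genuine obstacle beyond keeping the block dimensions straight; the crucial point is that the number $k-l$ of rows that are forced to be zero on $J$ exceeds the number $k-l-1$ of columns in which those rows are still allowed to be non-zero — exactly the surplus $|I|+|J|=k+1>k$ delivered by the Proposition. (As a remark, the contrapositive is what gets used: the incidence matrix of the digraph $L$ is doubly stochastic with positive common sum $p-1$, hence cannot be totally singular, so some monomial $N_{1,1^{\sigma}}\cdots N_{k,k^{\sigma}}$ is non-zero, which is the point of departure for producing the permutations $g$ in $R$.)
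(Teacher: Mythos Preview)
Your proof is correct and follows essentially the same double-counting argument as the paper: you sum the mass in the $k-l$ rows meeting the zero block and compare with the $k-l-1$ available columns, whereas the paper sums the mass in the $l+1$ columns meeting the zero block and compares with the $l$ available rows. These are mirror images of the same inequality $(|I|+|J|)s\le ks$ forced by $|I|+|J|=k+1$, so there is no genuine difference in approach.
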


\begin{proof}
Let the row sum be $s$. There exists $l\geq 0$ such that 
\begin{equation*}
N=\left( 
\begin{array}{cc}
X_{l\times \left( k-l\right) } & Z_{l\times \left( l+1\right) } \\ 
Y_{\left( k-l\right) \times \left( k-l\right) } & 0_{\left( k-l\right)
\times \left( l+1\right) }%
\end{array}%
\right) \text{.}
\end{equation*}%
Therefore the column sum of $\left( 
\begin{array}{c}
Z_{l\times \left( l+1\right) } \\ 
0_{\left( k-l\right) \times \left( l+1\right) }%
\end{array}%
\right) $ is $\left( l+1\right) s$ whereas the row sum is at most $ls$;
hence $s=0$.
\end{proof}

We go back to our graph $L$ and its incidence matrix $N$ which is doubly
stochastic with $s=p-1$. Then there exists a monomial $N_{1,1^{\sigma
}}N_{2,2^{\sigma }}...N_{k,k^{\sigma }}\not=0$ and so $N_{i,i^{\sigma
}}\not=0$ for all $i$. This produces for us a bijection $g:A_{p,k}^{\#}%
\rightarrow A_{p,k}^{\#}$. By removing the edges corresponding to $g$, the
graph $L$ is reduced to one which is $\left( s-1\right) $-regular.
Therefore, we can produce in this manner $p-1$ permutations $g$. Clearly, if 
$f$ is an isomorphism on the cyclic subgroups then all the permutations $g$
are equal.

\section{Classification of $G\left( A;f\right) $ for $A$ of small rank}

We treat in this section groups $G\left( A,B;f\right) $ were $A,B$ are
finite abelian groups generated by at most $4$ elements.

\begin{proposition}
Suppose $A=\left\langle a\right\rangle ,B=\left\langle b\right\rangle $ are
cyclic groups having equal finite orders $n$. Then, $G=G\left( A,B;f\right) $
is isomorphic to $A\times B$.
\end{proposition}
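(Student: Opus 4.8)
The plan is to show that the defining relations of $G = G(A,B;f)$ collapse to the single relation saying $A$ and $B$ commute elementwise, so that $G$ is the direct product $A \times B$ of two cyclic groups of order $n$. Write $A = \langle a \rangle$, $B = \langle b \rangle$, and for each exponent $i$ with $1 \le i \le n-1$ let $f$ send $a^i$ to $b^{\sigma(i)}$ for some permutation $\sigma$ of $\{1,\dots,n-1\}$ (here I am using that $f$ maps $A^\#$ bijectively onto $B^\#$ and fixes $e$). The relations of $G$ are then $[a^i, b^{\sigma(i)}] = e$ for $1 \le i \le n-1$, together with the internal relations of $A$ and of $B$.

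The key observation is that in a group, if $x$ commutes with $y$ then $x$ commutes with every power of $y$, and more to the point, the centralizer of a cyclic group $\langle b \rangle$ is determined by commutation with any single generator $b^j$ with $\gcd(j,n)=1$ — but here we do not have $\gcd(\sigma(i),n)=1$ in general. So the first real step is: from $[a^i, b^{\sigma(i)}] = e$ we only get that $a^i$ centralizes $\langle b^{\sigma(i)} \rangle$, a possibly proper subgroup of $B$. The idea to get around this is to run over all $i$ simultaneously. I would argue that the subgroup $C = \langle a \rangle^B$, the normal closure of $A$ in $G$ (equivalently, since $B$ is generated by $b$, the subgroup generated by all $A$-conjugates $a^{b^k}$), is abelian: indeed each conjugate $a^{b^k}$ of $a$ lies in $\langle a, b^? \rangle$-type subgroups, and one shows step by step that conjugating $a$ by $b$ does not move it, by a descent on the order. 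Concretely: pick $i=1$, so $a$ commutes with $b^{\sigma(1)}$; pick $i = \sigma(1)$... this chases a cycle of $\sigma$ and eventually, because every element of $\{1,\dots,n-1\}$ lies in some cycle and the subgroups $\langle b^{\sigma(i)}\rangle$ for $i$ ranging over a cycle of $\sigma$ generate a subgroup whose "$b$-saturation" is all of $B$, one extracts that $a$ commutes with $b$ itself.

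Here is the cleaner way I would actually organize it. Let $d = \gcd(\sigma(1), \sigma(2), \dots, \sigma(n-1), n)$; since $\sigma$ is a permutation of $\{1,\dots,n-1\}$, in particular $1$ is in its image, so $\sigma(i_0) = 1$ for some $i_0$, forcing $d = 1$. Then $\langle b^{\sigma(i)} : 1 \le i \le n-1 \rangle = \langle b^d \rangle = \langle b \rangle = B$. Now I claim $a$ centralizes all of $B$: we know $a^{i_0}$ centralizes $b^{\sigma(i_0)} = b$, hence $a^{i_0}$ is central in $G$ (it commutes with $a$ and with $b$, the generators). Repeating with $i_1 = \sigma^{-1}(i_0)$ — so $a^{i_1}$ commutes with $b^{\sigma(i_1)} = b^{i_0}$... — hmm, this still requires care because $i_0$ need not be $1$. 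The robust fix: show by downward induction on the order of $a^i$ in $A$ that $a^i$ is central. The element $a$ of order $n$ commutes with $b^{\sigma(1)}$; that alone shows $\langle a \rangle \cap Z(G) \supseteq \langle a \rangle \cap C_G(b^{\sigma(1)})$... The genuinely clean statement to prove is that $G' = \{e\}$: every generator-commutator $[a^i, b^{\sigma(i)}]$ is trivial by hypothesis, and I must deduce $[a,b]=e$. I would do this by noting $G$ is generated by two elements $a,b$ of order dividing $n$, so $G$ is a quotient of the metabelian (indeed, with all the $G(A,B;f)$ machinery, nilpotent by Theorem stated above) group, and then observe that $[a,b]$ lies in $G'$ which, being generated as a normal subgroup by the trivial elements $[a^i,b^{\sigma(i)}]$, together with the consequence $[a^i,b] = [a,b]^{?}$ in a metabelian group... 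The hard part, and the main obstacle, is exactly this: turning "commutes with each $b^{\sigma(i)}$" into "commutes with $b$" without already knowing $G$ is abelian — it requires exploiting that the $\sigma(i)$ collectively generate $B$ together with a commutator-collapse argument in the metabelian quotient. I expect the argument to go: work modulo $G''$, use the metabelian collection formula (as in the proof of the metabelian-quotient theorem above) to write $[a^i, b^j]$ in terms of $[a,b]$ and its conjugates, conclude from $[a^1,b^{\sigma(1)}]=e$ applied at the generator of a suitable cyclic factor that $[a,b]=e$, hence $G = \langle a \rangle \langle b \rangle$ with $\langle a\rangle, \langle b\rangle$ commuting, each of order $n$, and finally check the orders are exactly $n$ (no further collapse) by exhibiting $A \times B$ as a homomorphic image via $a \mapsto (a,e)$, $b \mapsto (e,b)$, which visibly satisfies all relations. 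This last surjection-in-both-directions step is routine.
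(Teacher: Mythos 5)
Your proposal never closes the one step that carries all the content. You correctly reduce everything to showing $[a,b]=e$, you correctly observe that each relation $[a^i,b^{\sigma(i)}]=e$ only places $a^i$ in the centralizer of the possibly proper subgroup $\langle b^{\sigma(i)}\rangle$, and your two concrete observations are sound (the element $a^{i_0}$ with $(a^{i_0})^f=b$ is central in $G$, and so is $a^f$). But at the decisive moment you yourself flag "the hard part" --- converting commutation of $a^i$ with $b^{\sigma(i)}$ for every $i$ into commutation of $a$ with $b$ --- and none of the sketches you offer (chasing cycles of $\sigma$, downward induction on the order of $a^i$, a metabelian collection formula modulo $G''$) is carried out or obviously completable as stated. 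In particular, the fact that the subgroups $\langle b^{\sigma(i)}\rangle$ jointly generate $B$ is of no direct use, since different powers of $a$ commute with different ones of these subgroups; and invoking the metabelian-quotient theorem only yields nilpotency of class at most $n$, not triviality of $G'$. So there is a genuine gap, not merely an omitted routine verification.

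What actually works --- and what the paper does --- is a counting (pigeonhole) argument on centralizers. Assume $G$ is nonabelian and write $C_{B}(a)=\langle b^{r}\rangle$, $C_{A}(b)=\langle a^{s}\rangle$ with $1<r,s<n$ (proper because $G$ is nonabelian, nontrivial because $a$ commutes with $a^{f}\neq e$ and $b$ with $b^{f^{-1}}\neq e$). If $\gcd(i,s)=1$ and $(a^{i})^{f}=b^{j}$, then $b^{j}$ commutes with $a^{i}$ by the defining relation and with $a^{s}$ because $a^{s}$ commutes with $b$; since $\langle a^{i},a^{s}\rangle=A$, it follows that $b^{j}\in C_{B}(a)$, i.e.\ $r\mid j$. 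Hence $f$ injects a set of $\phi(s)\frac{n}{s}$ elements of $A^{\#}$ into the $\frac{n}{r}-1$ nontrivial elements of $\langle b^{r}\rangle$, giving $\phi(s)\frac{n}{s}<\frac{n}{r}$, and symmetrically $\phi(r)\frac{n}{r}<\frac{n}{s}$; multiplying gives $\phi(r)\phi(s)<1$, a contradiction. This is precisely the missing step. Your closing remark that the obvious surjection onto $A\times B$ rules out any further collapse is correct and routine.
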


\begin{proof}
Suppose $G$ is not abelian. Let $1<r,s$ $<n$ be minimal integers such that $%
\left[ a,b^{r}\right] =e=\left[ a^{s},b\right] $. Then, 
\begin{eqnarray*}
f &:&\left\{ a^{i}\mid 1<i<n,\gcd \left( i,s\right) =1\right\} \rightarrow \\
&&\left\{ b^{j}\mid 1\leq j<n,r|j\right\} , \\
\phi \left( s\right) \frac{n}{s} &<&\frac{n}{r}; \\
f^{-1} &:&\left\{ b^{i}\mid 1<i<n,\gcd \left( i,r\right) =1\right\} \\
&\rightarrow &\left\{ a^{j}\mid 1\leq j<n,s|j\right\} , \\
\phi \left( r\right) \frac{n}{r} &<&\frac{n}{s}; \\
\phi \left( s\right) \phi \left( r\right) \frac{n}{r} &<&\phi \left(
s\right) \frac{n}{s}<\frac{n}{r}
\end{eqnarray*}%
which is a contradiction.
\end{proof}

\textbf{Example 4.} The following example shows that relaxing $f$ from
bijection to surjection may not maintain the finiteness of $G\left(
A,B;f\right) $.

Let $A=\left\langle a\right\rangle $ de a cyclic group of order $p^{3}$, $%
B=\left\langle b\right\rangle $ be cyclic of order $p^{2}$ and define $%
f:A\rightarrow B$ by choosing surjective maps 
\begin{eqnarray*}
f &:&e\rightarrow e, \\
A\backslash \left\langle a^{p}\right\rangle &\rightarrow &\left\langle
b^{p}\right\rangle \backslash \left\{ e\right\} , \\
\left\langle a^{p}\right\rangle \backslash \left\{ e\right\} &\rightarrow
&B\backslash \left\langle b^{p}\right\rangle \text{.}
\end{eqnarray*}%
Then the relations $\left[ a,a^{f}\right] =e$ in $G\left( A,B;f\right) $ are
equivalent to $\left\langle a^{p},b^{p}\right\rangle $ being central in $G$.
Therefore, $\frac{G}{\left\langle a^{p},b^{p}\right\rangle }$ is isomorphic
to the free product $C_{p}\ast C_{p}$.

\begin{proposition}
Let $A=\left\langle a_{1},a_{2}\right\rangle ,B=$ $\left\langle
b_{1},b_{2}\right\rangle $ be homogenous abelian groups of rank $2$, both
having finite exponent $n$. Then $G=G(A,B;f)$ is nilpotent of class at most $%
2$ and its derived subgroup is cyclic of order divisor of $n$.
\end{proposition}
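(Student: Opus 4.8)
The plan is to reduce the rank-$2$ homogeneous case to the rank-$1$ (cyclic) result together with the commutator identity in Proposition (i). Write $A=\langle a_1,a_2\rangle$, $B=\langle b_1,b_2\rangle$, each $\cong C_n\times C_n$, and let $f:A^{\#}\to B^{\#}$ be the bijection. First I would record that for each $x\in A^{\#}$, $x$ commutes with $x^f\in B^{\#}$; in particular $\langle x\rangle$ and $\langle x^f\rangle$ commute. Applying Proposition (i) with $x_i=a_i$, $y_i=b_i$ (after possibly adjusting $f$ via Lemma 1 so that $a_i^f=b_i$, using that $Aut(A)$ acts transitively enough on bases — here one may invoke the Fixing-a-basis Corollary or simply choose coordinates), we get $[b_1b_2,a_1a_2]=[b_1^{b_2},a_2][b_2,a_1^{a_2}]$. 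The key point is that $a_1a_2$ is another element of $A^{\#}$, so $a_1a_2$ commutes with $(a_1a_2)^f$; writing $(a_1a_2)^f=b_1^{i}b_2^{j}$ for suitable exponents, one extracts from $[a_1a_2,b_1^ib_2^j]=e$ and the previously noted commutations a relation forcing $[b_1,a_2]$ and $[b_2,a_1]$ to be tightly linked.

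Second, I would aim to show $G'=\langle[b_2,a_1]\rangle$ (equivalently $\langle[a_1,b_2]\rangle$) is central and cyclic. The mechanism: every generator of $G$ normalizes $\langle[a_1,b_2]\rangle$. Indeed $a_1$ and $b_2$ centralize it (as in the proof of Proposition (iii): $[b_1,a_2]^{a_1}=[b_1,a_2]$ etc., because $a_1$ commutes with $b_1$ and we are in class $2$), and one checks using the relations coming from the remaining elements of $A^{\#}$ — things like $a_1^{s}a_2$, $a_1 a_2^{t}$ for $1\le s,t\le n-1$, whose $f$-images lie in $B^{\#}$ and must commute with them — that $a_2$ and $b_1$ also normalize (in fact centralize) $\langle[a_1,b_2]\rangle$. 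Once $G'$ is central of rank $1$, nilpotency class $\le 2$ is immediate, and cyclicity follows; the order divides $n$ because $[a_1,b_2]^n=[a_1^n,b_2]=[e,b_2]=e$ since $A$ has exponent $n$.

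The step I expect to be the main obstacle is pinning down exactly which scalar relations the extra elements of $A^{\#}$ contribute, and assembling them so that the commutator $[a_1,b_2]$ actually generates all of $G'$ rather than just a piece of it. Concretely, one must show that commutators like $[a_1,b_1]$ (which is trivial), $[a_2,b_2]$ (trivial), $[a_1,b_2]$, $[a_2,b_1]$, and all $[a_1^{s}a_2^{t},b_1^{u}b_2^{v}]$ collapse into powers of a single element. This is where homogeneity (equal exponent $n$ in both factors) is essential: it guarantees $(a_1a_2)$, $(a_1^s a_2)$ etc. all have order $n$, so their $f$-images are genuine order-$n$ elements and the bilinear-looking identity $(*)$ of Proposition (i) can be iterated symmetrically in the two coordinates. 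I would first nail the $p$-elementary case ($n=p$ prime), where $A^{\#}$ consists of scalar multiples of the $p+1$ one-dimensional subspaces and the combinatorics of $(*)$ is cleanest, and then lift to general exponent $n$ by the same argument applied coordinate-wise, noting that the cyclic case (the preceding Proposition, $G(\langle a\rangle,\langle b\rangle;f)\cong\langle a\rangle\times\langle b\rangle$) handles the "diagonal" subgroups $\langle a_1^s a_2^t\rangle$ paired with $\langle b_1^u b_2^v\rangle$ individually.
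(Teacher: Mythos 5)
Your outline points in the right direction (normalize $f$ on a generating pair, then exploit $[a_1^i a_2^j, b_1^k b_2^l]=[a_1^i,b_2^l][a_2^j,b_1^k]=e$), but the two steps you defer are precisely where the content of the proof lies, and as stated they have gaps. First, arranging $a_i^f=b_i$ is not just a matter of ``choosing coordinates'': a priori $f$ could send every primitive element (one belonging to some $2$-element generating set) to a non-primitive element, in which case no pre- and post-composition with automorphisms fixes a basis. The paper rules this out by counting: there are $(n-\varphi(n))^2$ non-primitive elements versus $2n\varphi(n)-\varphi(n)^2$ primitive ones, and then a second count ($n\varphi(n)>(n-\varphi(n))^2$) to get the second generator aligned as well. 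The Fixing-a-basis Corollary you invoke is proved only for elementary abelian $p$-groups $A_{p,k}$, so it does not cover general homogeneous groups of exponent $n$; you would need to supply the counting argument yourself.

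Second, the step you label ``the main obstacle'' is not resolved by your plan. From $f:a_1^ia_2^j\mapsto b_1^kb_2^l$ one gets that $[a_1^i,b_2^l]=[b_1^k,a_2^j]$ is central (each side is centralized by a complementary set of generators), but this only makes certain \emph{powers} of $[a_1,b_2]$ central. To get $[a_1,b_2]$ itself central one must exhibit some $a_1^ia_2^j$ with $\gcd(i,n)=1$, $j\neq 0$, whose image $b_1^kb_2^l$ has $\gcd(l,n)=1$; the paper does this with yet another cardinality comparison, $\varphi(n)(n-1)>(n-1)(n-\varphi(n)-1)$. Your proposed fallback of handling ``diagonal'' subgroups $\langle a_1^sa_2^t\rangle$ via the cyclic-groups proposition does not work, because $f$ is an arbitrary bijection and need not carry a cyclic subgroup of $A$ into a single cyclic subgroup of $B$, so $G(\langle a\rangle,\langle b\rangle;f)$ is not a subquotient you can point to. Once centrality of $[a_1,b_2]$ and $[a_2,b_1]$ is actually established, your concluding remarks (class $\le 2$, order dividing $n$, $G'$ cyclic) are fine and match the paper.
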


\begin{proof}
Let us call an element of $A$ which is part of some $2$-generating set of $A$
\textit{primitive}. The non-primitive elements are of the form is $%
a_{1}^{i}a_{2}^{j}$\ where $\gcd \left( i,n\right) \not=1\not=\gcd \left(
j,n\right) $; therefore, their number is $\left( n-\varphi \left( n\right)
\right) ^{2}$. The number of primitive elements is $n^{2}-\left( n-\varphi
\left( n\right) \right) ^{2}=2n\varphi \left( n\right) -\varphi \left(
n\right) ^{2}$.

The difference between the number of primitive elements and the
non-primitives is positive: 
\begin{eqnarray*}
&&2n\varphi \left( n\right) -\varphi \left( n\right) ^{2}-\left( n-\varphi
\left( n\right) \right) ^{2} \\
&=&4n\varphi \left( n\right) -2\varphi \left( n\right) ^{2}-n^{2}
\end{eqnarray*}%
and

\begin{equation*}
2\frac{\varphi \left( n\right) }{n}+2\frac{\varphi \left( n\right) }{n}\geq
1+2\left( \frac{\varphi \left( n\right) }{n}\right) ^{2}\text{,}
\end{equation*}%
since $\frac{1}{2}\leq \frac{\varphi \left( n\right) }{n}<1$.

Since $f$ is a bijection we may suppose $f:a_{1}\rightarrow b_{1}$. Now, any 
$\ 2$-generating set of $A$ containing $a_{1}$ has the form $\left\{
a_{1},a_{1}^{l}a_{2}^{m}\right\} $ where $\gcd \left( m,n\right) =1$; there
are $n\varphi \left( n\right) $ such elements $a_{1}^{l}a_{2}^{m}$. As $%
n\varphi \left( n\right) >\left( n-\varphi \left( n\right) \right) ^{2}$, we
may suppose $f:a_{2}\rightarrow b_{2}$.

As $f:a_{1}^{i}a_{2}^{j}\rightarrow b_{1}^{k}b_{2}^{l}$, we have%
\begin{equation*}
\left[ a_{1}^{i}a_{2}^{j},b_{1}^{k}b_{2}^{l}\right] =\left[
a_{1}^{i},b_{2}^{l}\right] \left[ a_{2}^{j},b_{1}^{k}\right] =e\text{.}
\end{equation*}%
Since $a_{1},b_{2}$ commute with $\left[ a_{2}^{j},b_{1}^{k}\right] $, while 
$a_{2},b_{1}$ commute with $\left[ a_{1}^{i},b_{2}^{l}\right] $, we conclude
that $\left[ b_{1}^{k},a_{2}^{j}\right] =\left[ a_{1}^{i},b_{2}^{l}\right] $
is central. We note that the size of 
\begin{equation*}
\left\{ a_{1}^{i}a_{2}^{j}\mid \gcd \left( i,n\right) =1,j\not=0\right\}
\end{equation*}%
$\varphi \left( n\right) \left( n-1\right) $,whereas the size of 
\begin{equation*}
\left\{ b_{1}^{k}b_{2}^{l}\mid k\not=0,\gcd \left( l,n\right)
\not=1,l\not=0\right\}
\end{equation*}
is $\left( n-1\right) \left( n-\varphi \left( n\right) -1\right) $.

As the first set is larger than the second, there exist $i,j$ with $\gcd
\left( i,n\right) =1,j\not=0$ such that $f:a_{1}^{i}a_{2}^{j}\rightarrow
b_{1}^{k}b_{2}^{l}$ where $\gcd \left( l,n\right) =1$. We rewrite $a_{1}^{i}$
as $a_{1}$ and $b_{2}^{l}$ as $b_{2}$ and conclude that $\left[ a_{1},b_{2}%
\right] $ is central. Similarly, $\left[ a_{2},b_{1}\right] $ is also
central.

Hence,%
\begin{equation*}
\left[ a_{1},b_{2}\right] ^{n}=\left[ a_{2},b_{1}\right] ^{n}=e
\end{equation*}%
and there exist $0\leq s,t\leq n-1$ such that 
\begin{equation*}
\left[ a_{1},b_{2}\right] =\left[ a_{2},b_{1}\right] ^{s},\left[ a_{2},b_{1}%
\right] =\left[ a_{1},b_{2}\right] ^{t}
\end{equation*}

and $G^{\prime }=\left\langle \left[ a_{1},b_{2}\right] \right\rangle
=\left\langle \left[ a_{2},b_{1}\right] \right\rangle $.
\end{proof}

\subsection{The groups $G\left( A_{p,k};f\right) $ for $p=2,3$ and $k=$ $3,4$%
}

We consider the groups 
\begin{equation*}
G\left( A_{2,3};f\right) ,G\left( A_{2,4};f\right) ,G\left( A_{3,3};f\right) 
\text{,}
\end{equation*}%
their orders, nilpotency classes $c$ and derived lengths $d$. We write the
group $A_{p,k}$ additively.

(i) The group $A_{2,3}$ is 
\begin{equation*}
\left\{
0,a_{1},a_{2},a_{3},a_{1}+a_{2},a_{1}+a_{3},a_{2}+a_{3},a_{1}+a_{2}+a_{3}%
\right\} \text{,}
\end{equation*}%
which we enumerate lexicographically and identify its elements with their
positions in this order. The group $SL(3,2)$ in its linear action on $%
A_{2,3}^{\#}$, is generated by the permutations $\left( 2,7,4,6,5,8,3\right)
,\left( 2,8,7\right) \left( 3,4,6\right) $. Using GAP, we find that there
are $4$ double cosets in $SL(3,2)\backslash Sym\left( 7\right) /SL(3,2)$,
which are represented by the permutations%
\begin{equation*}
\left\{ \left( {}\right) ,\left( 6,7\right) ,\left( 6,7,8\right) ,\left(
5,6,7,8\right) \right\} \text{.}
\end{equation*}%
\newline
Each permutation produces for us a bijection $f$ and a group as in the table
below%
\begin{equation*}
\begin{tabular}{|c|c|c|c|}
\hline
$f$ & $\left\vert G\left( A_{2,3};f\right) \right\vert $ & $c$ & $d$ \\ 
\hline
$()$ & $2^{10}$ & $3$ & $2$ \\ \hline
$(6,7)$ & $2^{10}$ & $3$ & $2$ \\ \hline
$(6,7,8)$ & $2^{8}$ & $2$ & $2$ \\ \hline
$(5,6,7,8)$ & $2^{8}$ & $2$ & $2$ \\ \hline
\end{tabular}%
\text{.}
\end{equation*}%
Further analysis shows that these $4$ groups are non-isomorphic.

(ii) The group $A_{2,4}$ is treated in a similar manner. We find that there
are $3374$ double cosets in $SL(4,2)\backslash Sym\left( 15\right) /SL(4,2)$%
. The corresponding groups $G\left( A_{2,4};f\right) $ have orders 
\begin{equation*}
2^{9},2^{10},2^{11},2^{12},2^{13},2^{15},2^{19}\text{.}
\end{equation*}%
There are $5$ representatives $f$ for which the groups have maximum order.
We list them below with their invariants $c,d$:%
\begin{equation*}
\begin{tabular}{|c|c|c|c|}
\hline
$f$ & $\left\vert G\left( A_{2,4};f\right) \right\vert $ & $c$ & $d$ \\ 
\hline
$()$ & $2^{19}$ & $4$ & $2$ \\ \hline
$(15,16)$ & $2^{19}$ & $3$ & $3$ \\ \hline
$(11,14)(15,16)$ & $2^{19}$ & $5$ & $3$ \\ \hline
$(9,11)(10,13)(12,14)$ & $2^{19}$ & $5$ & $3$ \\ \hline
$(9,12)(10,13)(11,14)$ & $2^{19}$ & $4$ & $2$ \\ \hline
\end{tabular}%
\text{.}
\end{equation*}%
Further analysis shows that these $5$ groups are non-isomorphic.

(iii) The set of $1$-dimensional subspaces of $A_{3,3}$ has size $13$. The
group $GL(3,3)$ in its linear action on $A_{3,3}$ induces the group $PGL(3,3)
$. There are $252$ double cosets in $PGL(3,3)\backslash Sym\left( 13\right)
/PGL(3,3)$. A double coset representative corresponds to a bijection $g$ of $%
A_{3,3}^{\#}$ which is linear on the $1$-dimensional subspaces of $A_{3,3}$.
We find that the corresponding groups have orders $3^{6},3^{7},3^{8},3^{9}$.
The groups of order $3^{6}$ are clearly isomorphic to $A_{3,3}\times A_{3,3}$%
. Those of higher order have nilpotency class $2$. There is a unique group
of maximum order $3^{9}$ which clearly is isomorphic to $\chi \left(
A_{3,3}\right) $.

\section{Three General Examples}

Working by hand with $G=G\left( H,K;f\right) $, it is easy to produce many
consequences from the defining relations: given $h\in H,k\in K$, then the
equalities 
\begin{equation*}
\left[ h,k\right] =\left[ h,h^{f}k\right] =\left[ k^{f^{-1}}h,k\right]
\end{equation*}%
hold and these serve to define the two maps 
\begin{equation*}
\alpha :\left( h,k\right) \rightarrow \left( h,h^{f}k\right) ,\beta :\left(
h,k\right) \rightarrow \left( k^{f^{-1}}h,k\right)
\end{equation*}%
on the set $H\times K$. Finding equivalent forms for $\left[ h,k\right] $
according to the above process corresponds to calculating the orbits of the
group $\left\langle \alpha ,\beta \right\rangle $ in its action on $H\times
K $. We will illustrate this sort of analysis in the examples below.

\subsubsection{The multiplicative inverse function}

Let $F,\dot{F}$ be isomorphic fields, via $a\rightarrow \dot{a}$. Define $%
f:0\rightarrow 0,k\rightarrow \dot{k}^{-1\text{.}}$ for $k\not=0$ and let 
\begin{equation*}
G=G\left( F,inv\right) =\left\langle F,\dot{F}\mid \left[ a,\frac{1}{\dot{a}}%
\right] =e\text{ for }a\not=0\right\rangle \text{.}
\end{equation*}

Given an integer $m$ , we have $\left[ a,\frac{m}{\dot{a}}\right] =e$.
Therefore, if $F=GF\left( p\right) $ or $\mathbb{Q}$, the group $G$ is
isomorphic to $F\times F$.

We will prove

\begin{theorem}
Let $F=GF\left( 2^{k}\right) $ where $2^{k}-1$ is a prime number. Then, $%
G\left( F;inv\right) $ is nilpotent of class at most $2$.
\end{theorem}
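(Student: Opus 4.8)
The plan is to analyze $G = G(F; \mathrm{inv})$ for $F = GF(2^k)$ with $q := 2^k - 1$ prime, using the orbit method described just before the theorem: equivalent forms of a commutator $[a, \dot b]$ correspond to orbits of $\langle \alpha, \beta\rangle$ on $F \times \dot F$, where $\alpha: (a, \dot b) \mapsto (a, \dot a\,{}^{-1}\dot b)$ — wait, more precisely $\alpha:(h,k)\mapsto(h, h^f k)$, so here $\alpha:(a,\dot b)\mapsto(a, \dot a^{-1}\dot b)$ (writing group operations multiplicatively on the $F$-side would be confusing since $F$ is additive; I will write $F$ additively and $\dot F$ additively, with $f(a) = \dot a^{-1}$ the \emph{multiplicative} inverse). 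The key starting identities are $[a, \dot b] = [a, \dot a^{-1} + \dot b] = [\,\widetilde{b^{-1}} + a,\ \dot b\,]$ for $a, b \neq 0$, where I extend notation so that $\widetilde{c}$ denotes $f^{-1}(\dot c)$-type expressions; I will need to track these carefully.

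**Main computation: showing $[a, \dot b]$ is central for enough pairs.** First I would establish that $[a, \dot b]$ is central whenever $a + b \neq 0$ and $a, b \neq 0$. The mechanism: since $[a, \dot a^{-1}] = e$, element $a$ centralizes $\langle \dot a^{-1}\rangle = \langle \dot a\rangle_{\mathrm{add}}$... no — I must be careful: $\dot a^{-1}$ is a single element, and $[a, \dot a^{-1}] = e$ means $a$ commutes with $\dot a^{-1}$ but a priori not with $\dot a$ itself. So the relevant fact is $[a, \dot a^{-1}] = e$ for each nonzero $a$. The strategy is: starting from $[a, \dot b]$, use $\alpha$ and $\beta$ to move to a pair $(a', \dot b')$ where $a'$ and $b'$ are the \emph{same} field element (so the commutator collapses), forcing relations. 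Concretely, I expect the core lemma to be: for $a \neq b$ (both nonzero), one can write $[a, \dot b] = [c, \dot c]^{\pm}\cdots = e$-adjacent expressions, or more realistically, $[a,\dot b]$ equals $[x, \dot x]$-conjugates that show it lies in the center. The field identity doing the work is presumably $\frac{1}{a} + \frac{1}{b} = \frac{a+b}{ab}$, i.e. $\dot a^{-1} + \dot b^{-1} = \dot a^{-1}\dot b^{-1}(\dot a + \dot b)$ — this mixes additive and multiplicative structure, and the primality of $2^k - 1$ enters because it guarantees $F^{\times}$ is cyclic of prime order, so every nonidentity element generates $F^\times$ multiplicatively, and (crucially) $\langle a \rangle_{F^\times}$-orbits are all of $F^\times$, giving enough "room" in the orbit graph to connect commutators.

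**Handling the diagonal $a + b = 0$, i.e. $b = a$.** In characteristic $2$, $a + b = 0$ means $b = a$, so the troublesome pairs are exactly $(a, \dot a)$ with the \emph{"wrong"} sign of exponent — i.e. $[a, \dot a]$ (not $[a, \dot a^{-1}]$, which is trivial). Here I would argue: once $[a, \dot b]$ is known central for all $a \neq b$, I can compute $[a, \dot a]$ by a roundabout route — e.g. write $a = a_1 + a_2$ with $a_1 \neq a_2$ (possible when $|F| > 2$; the case $k=1$ gives $q=1$ not prime, excluded, so $|F| \geq 4$) and expand $[a_1 + a_2, \dot a]$ using commutator identities, modulo the already-central commutators. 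This should express $[a, \dot a]$ in terms of central elements, hence central, hence $G' $ is central and $G$ is nilpotent of class $\leq 2$.

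**Expected main obstacle.** The hard part will be the combinatorial bookkeeping in the orbit argument: showing that for $a \neq b$ the orbit of $(a, \dot b)$ under $\langle \alpha, \beta \rangle$ reaches a pair yielding a \emph{central} value, and doing so without circular reasoning (the commutators one uses to simplify must themselves already be known central, so there is an induction or a careful ordering needed). The field identity $\dot a^{-1} + \dot b^{-1} = \dot a^{-1}\dot b^{-1}(\dot a+\dot b)$ is what forces the iterated commutators $[a, \dot b, c]$ to vanish, but translating "this product of field elements equals that one" into "this commutator expression collapses" requires that the relevant intermediate elements be nonzero and distinct — exactly where $2^k - 1$ being prime is used to avoid degenerate configurations (a non-prime $2^k-1$ would allow proper subgroups of $F^\times$, breaking transitivity of the orbit graph on cyclic-subgroup structure, which I expect is why the hypothesis is needed and the bound would fail otherwise).
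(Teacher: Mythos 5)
You have correctly identified the framework (orbits of $\langle\alpha,\beta\rangle$ on $F\times\dot F$) and the role of primality ($F^{\#}$ cyclic of prime order, so every element $\neq 1$ generates it), but the central mechanism of the proof is missing, and the mechanism you sketch in its place would not work. The paper's argument hinges on an explicit computation of the orbits in characteristic $2$: for $a\neq 0\neq b$ with $ab\neq 1$, setting $c=\frac{ab}{1+ab}$ one has $\alpha^{2}=\beta^{2}=e$ and $(\alpha\beta)^{i}:(a,b)\mapsto(c^{i}a,c^{-i}b)$, so the orbit of $(a,b)$ consists of the pairs $(c^{i}a,\,c^{-i}b)$ and $(c^{i-1}a,\,c^{-i}b)$. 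Hence $w=[a,\dot b]=[c^{i}a,\dot c^{-i}\dot b]$ for all $i$, and since every element of $F$ has order $2$ in $G$, each $c^{i}a$ and each $\dot c^{-i}\dot b$ \emph{inverts} $w$; therefore each sum $(1+c^{i})a=a+c^{i}a$ \emph{centralizes} $w$. Because $2^{k}-1$ is prime, $c$ generates $F^{\#}$, so $\left\{(1+c^{i})a\mid i\geq 1\right\}=(F\smallsetminus\{1\})a$ generates the additive group $F$; likewise on the $\dot F$ side. Thus $w$ is centralized by $\langle F,\dot F\rangle=G$, i.e. central, and since the remaining commutators $[a,\dot a^{-1}]$ are trivial by the defining relations, $G'$ is central and the class is at most $2$.

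Your sketch replaces this with the hope of steering $(a,\dot b)$ to a pair where the two entries are ``the same field element so the commutator collapses''. That cannot happen: the product of the two entries is invariant along the orbit (it equals $ab$ throughout), so an orbit starting with $ab\neq 1$ never reaches a pair of the form $(x,\dot x^{-1})$, which is the only kind of pair whose commutator is trivially $e$; and $[x,\dot x]$ does not collapse (the group has class $2$, it is not abelian). Relatedly, your case split is on $a=b$ versus $a\neq b$, but the correct dichotomy is $ab=1$ (commutator trivial by the defining relations) versus $ab\neq 1$ (the generic case, which already includes $[a,\dot a]$ for $a\neq 1$). So while the high-level plan and the explanation of where primality enters are right, the proposal has a genuine gap at its core: the identification of $c=ab/(1+ab)$, the step ``orbit elements invert $w$, hence their pairwise sums centralize $w$'', and the additive-generation argument are all absent, and the substitute mechanism is unworkable.
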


It appears that this result holds more generally. We develop below formulas
for general fields $F$.

\begin{lemma}
Let $i\geq 1,b\in F,b\not=0$ and suppose $(2i-1)!$ is invertible in $F$. Then%
\begin{eqnarray*}
\left( \alpha \beta \right) ^{i} &:&\left( 0,b\right) \rightarrow \left( 
\frac{(2i-1)!}{\left( 2^{i-1}(i-1)!\right) ^{2}}\frac{1}{b},\frac{%
2^{2i-2}\left( (i-1)!\right) ^{2}}{(2i-2)!}b\right) , \\
\left( \alpha \beta \right) ^{i}\alpha &:&\left( 0,b\right) \rightarrow
\left( \frac{(2i-1)!}{\left( 2^{i-1}(i-1)!\right) ^{2}}\frac{1}{b},\frac{%
2i\left( 2^{i-1}(i-1)!\right) ^{2}}{(2i-1)!}b\right) \text{.}
\end{eqnarray*}%
Let $p$ be an odd prime number. Then,%
\begin{equation*}
\left( \alpha \beta \right) ^{\frac{p+1}{2}}:\left( 0,b\right) \rightarrow
\left( 0,\left( -1\right) ^{\frac{p-1}{2}}b\right)
\end{equation*}%
modulo $p$.
\end{lemma}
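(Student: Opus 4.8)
The plan is to work out the action of the maps $\alpha$ and $\beta$ on the set $F \times \dot F$ explicitly in coordinates, where $\alpha:(h,k)\mapsto(h,h^f k)$ and $\beta:(h,k)\mapsto(k^{f^{-1}}h,k)$. Since $f$ sends $0\mapsto 0$ and $k\mapsto \dot k^{-1}$ for $k\neq 0$, and the inverse map $f^{-1}$ does likewise, starting from a pair $(0,b)$ with $b\neq 0$ we have $0^f=0$, so $\alpha:(0,b)\mapsto(0,b)$ — wait, that is a fixed point, so I should track instead the orbit starting from a generic pair and specialize. The right way is to compute $\alpha\beta$ on a pair $(a,b)$: first $\beta:(a,b)\mapsto(b^{f^{-1}}a,b)=(\,\dot{}^{-1}(b)\,a,\,b)$ — here one must be careful that $f^{-1}$ maps $\dot F$ back to $F$, so $b^{f^{-1}}$ is the element of $F$ whose image under $a\mapsto\dot a$ inverts to $b$, i.e.\ if $b=\dot c^{-1}$ then $b^{f^{-1}}=c$. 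Then $\alpha$ multiplies the second coordinate by the $f$-image of the new first coordinate. Iterating, one gets a recursion on the two coordinates which, for the orbit through $(0,b)$, produces coordinates that are rational multiples of $1/b$ and $b$ respectively, with the rational coefficients satisfying a linear recurrence.

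The key step is to identify those rational coefficients in closed form. The claim is that after $i$ applications of $\alpha\beta$ the coordinates are $\left(\dfrac{(2i-1)!}{(2^{i-1}(i-1)!)^2}\cdot\dfrac1b,\ \dfrac{2^{2i-2}((i-1)!)^2}{(2i-2)!}\cdot b\right)$. I would prove this by induction on $i$: establish the base case $i=1$ by direct computation of $\alpha\beta$ on $(0,b)$ (using that $(\alpha\beta)$ on $(0,b)$ first sends it via $\beta$ to $(0,b)$ then via further maps — actually the first nontrivial step comes from computing $\beta\alpha$ or from starting the chain correctly; the paper's indexing shows $(\alpha\beta)^1$ already moves $(0,b)$, so the base case is a short explicit calculation). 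For the inductive step, suppose $(\alpha\beta)^i:(0,b)\mapsto(u_i/b,\ v_i b)$ with $u_i,v_i$ the asserted rationals; apply $\alpha\beta$ once more, expressing the new coordinates in terms of $u_i,v_i$, and check that the resulting two-term rational recurrence is solved by $u_{i+1},v_{i+1}$. This reduces to the binomial-coefficient identity $\dbinom{2i}{i}=\dfrac{2(2i-1)}{i}\dbinom{2i-2}{i-1}$, which is elementary. The intermediate formula for $(\alpha\beta)^i\alpha$ then follows by applying one more $\alpha$ to the $i$-th term: $\alpha$ fixes the first coordinate and multiplies the second by the $f$-image of the first coordinate, which is $\dfrac1{\dot u_i/b}=\dfrac{b}{u_i}$ up to the field isomorphism, giving the stated $\dfrac{2i(2^{i-1}(i-1)!)^2}{(2i-1)!}\,b$.

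For the final assertion, specialize to $F=\mathrm{GF}(p)$ (or any field of odd characteristic $p$) and take $i=\frac{p+1}{2}$; then $2i-1=p\equiv 0$, so the hypothesis "$(2i-1)!$ invertible" fails, and one cannot directly substitute into the formula — instead I would compute $(\alpha\beta)^{(p+1)/2}$ as the composite $(\alpha\beta)^{(p-1)/2}\circ(\alpha\beta)$, using the valid formula for $(\alpha\beta)^{(p-1)/2}$ (where $(p-2)!$ is invertible mod $p$) and then applying $\alpha\beta$ once more by hand. By Wilson's theorem $(p-1)!\equiv -1$, and the relevant central binomial coefficient $\binom{p-1}{(p-1)/2}\equiv(-1)^{(p-1)/2}\pmod p$ (which follows from $\binom{p-1}{j}\equiv(-1)^j$). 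Chasing the coordinates through shows the first coordinate returns to $0$ and the second becomes $(-1)^{(p-1)/2}b$, as claimed. The main obstacle is the bookkeeping in the inductive step — getting the factors of $2$ and the factorials to line up exactly — together with the care needed at the singular value $i=\frac{p+1}{2}$, where the formula must be reinterpreted via a one-step extension rather than plugged into naively. Once the orbit formulas are in hand, they will feed directly into bounding the orbits of $\langle\alpha,\beta\rangle$ and hence into proving that $G(F;\mathrm{inv})$ has nilpotency class at most $2$ when $2^k-1$ is prime.
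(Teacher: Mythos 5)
Your proposal is correct and follows essentially the same route as the paper: verify the closed forms for $(\alpha\beta)^i$ and $(\alpha\beta)^i\alpha$ by direct induction (the paper merely says they ``can be verified in a straightforward manner''), then specialize to $i=\frac{p+1}{2}$ and invoke Wilson's theorem. Your one-step-by-hand workaround for the non-invertibility of $(2i-1)!=p!$ at that value matches in substance what the paper does by rewriting the second coordinate's coefficient as $\frac{2^{p-2}\left(\frac{p-1}{2}\right)!\left(\frac{p-3}{2}\right)!}{(p-2)!}$, which has only $(p-2)!$ in the denominator and so reduces cleanly modulo $p$.
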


\begin{proof}
The first formulae can be verified in a straightforward manner.

In case the characteristic of $F$ is a prime number $p$ then%
\begin{equation*}
\left( \alpha \beta \right) ^{\frac{p+1}{2}}:\left( 0,b\right) \rightarrow
\left( 0,\frac{2^{p-2}\left( \frac{p-1}{2}\right) !\left( \frac{p-3}{2}%
\right) !}{(p-2)!}b\right)
\end{equation*}

and by Wilson's theorem,%
\begin{eqnarray*}
(p-1)! &=&\left( -1^{2}\right) \left( -2^{2}\right) ...\left( -\left( \frac{%
p-1}{2}\right) ^{2}\right) \\
&=&\left( -1\right) ^{\frac{p-1}{2}}\left( \frac{p-1}{2}!\right) ^{2}\equiv
-1\text{ modulo }p\text{,} \\
\left( \frac{p-1}{2}!\right) ^{2} &\equiv &\left( -1\right) ^{\frac{p+1}{2}}%
\text{ modulo }p\text{.}
\end{eqnarray*}
\end{proof}

\begin{lemma}
Let $L=GF\left( p\right) $ when charac$\left( F\right) =p$ and $L=\mathbb{Z}$
when charac$\left( F\right) =0$. Let 
\begin{equation*}
T=\left\{ \left( a,b\right) \mid a\not=0\not=b,ab\not\in L\right\} \text{.}
\end{equation*}%
Then, $\alpha ,\beta :T\rightarrow T$ and for all integers $%
i_{1},...,i_{s},j_{1},...,j_{s}$,%
\begin{equation*}
\alpha ^{i_{1}}\beta ^{j_{1}}...\alpha ^{i_{s}}\beta ^{j_{s}}:\left(
a,b\right) \rightarrow \left( a^{\prime },b^{\prime }\right) \text{,}
\end{equation*}%
where%
\begin{eqnarray*}
a^{\prime } &=&\frac{\Pi _{1\leq s\leq k}\left(
i_{1}+...+i_{s}+j_{1}+...+j_{s}+ab\right) }{\Pi _{1\leq s\leq k}\left(
i_{1}+...+i_{s}+j_{1}+...+j_{s-1}+ab\right) }a, \\
b^{\prime } &=&\frac{\Pi _{1\leq s\leq k}\left(
i_{1}+...+i_{s}+j_{1}+...+j_{s-1}+ab\right) }{ab\Pi _{1\leq s\leq k-1}\left(
i_{1}+...+i_{s}+j_{1}+...+j_{s}+ab\right) }b\text{.}
\end{eqnarray*}%
The relations $\left[ \alpha ^{i},\beta ^{j}\right] =\left[ \alpha
^{j},\beta ^{i}\right] $ hold on the set $T$, for all integers $i,j$.
\end{lemma}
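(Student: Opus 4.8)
The plan is to prove the two assertions of the lemma separately, treating the explicit transformation formula for words $\alpha^{i_1}\beta^{j_1}\cdots\alpha^{i_s}\beta^{j_s}$ first, and then deducing the commutator relations $[\alpha^i,\beta^j]=[\alpha^j,\beta^i]$ on $T$ as a corollary. For the first part, I would unwind the definitions: recall from the general discussion that $\alpha:(h,k)\mapsto(h,h^f k)$ and $\beta:(h,k)\mapsto(k^{f^{-1}}h,k)$, and that here $f$ is multiplicative inversion, so on a pair $(a,b)$ with $a,b\neq 0$ we get $a^f = 1/a$ and $b^{f^{-1}} = 1/b$, giving $\alpha:(a,b)\mapsto(a,\tfrac{1}{a}b)$ and $\beta:(a,b)\mapsto(\tfrac{1}{b}a,b)$. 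Iterating, $\alpha^i:(a,b)\mapsto(a,\tfrac{1}{a^i}b)$ provided the relevant powers make sense, and $\beta^j:(a,b)\mapsto(\tfrac{1}{b^j}a,b)$. The cleanest route is to introduce the ``coordinate'' $u=ab$ (the product), observe that the first coordinate $a$ is scaled only by $\beta$-steps and the second coordinate $b$ only by $\alpha$-steps, and track how $u$ evolves: a single $\alpha$ sends $ab\mapsto a\cdot\tfrac{1}{a}b \cdot (\text{correction})$ — more precisely one checks $\alpha:(a,b)\mapsto(a, b')$ with $ab' = $ something rational in $ab$, and similarly for $\beta$. I expect that with $u=ab$, the map $\alpha$ acts as $u\mapsto$ a Möbius-type or additive-type shift; in characteristic-free form the product $ab$ behaves additively under the counter $i_1+\cdots+j_s$ appearing in the statement. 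The strategy is therefore induction on the word length $s$: assume the formula for a word of length $2s-2$ (or for the prefix), apply one more $\alpha^{i_s}$ and one more $\beta^{j_s}$, and verify that the telescoping products in the numerators and denominators extend correctly. This is the routine-but-bookkeeping-heavy step.

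For the domain claim that $\alpha,\beta:T\to T$, I would argue that membership in $T$ requires $a\neq 0$, $b\neq 0$, and $ab\notin L$ (the prime field, or $\mathbb{Z}$ in characteristic $0$). Under $\alpha:(a,b)\mapsto(a,\tfrac1a b)$ the first coordinate is unchanged, the second is nonzero, and the new product is $a\cdot\tfrac1a b = b\cdot(\text{shift})$; one checks the product is shifted by an element of $L$, so it stays outside $L$ exactly because $ab\notin L$ and $L$ is closed under the relevant shifts. This is precisely why $T$ is defined by the condition $ab\notin L$ rather than a stronger condition: it makes $T$ invariant under both generators and keeps every denominator appearing in the transformation formula nonzero (each denominator is of the form $i_1+\cdots+j_{s-1}+ab$, which is a nonzero element of $F$ since $ab\notin L$ implies $ab$ plus any element of $L$ is nonzero). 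I would state this nonvanishing explicitly as the key point that licenses all the divisions in the formula.

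For the commutator relations, I would specialize the general formula to the words $\alpha^i\beta^j\alpha^{-i}\beta^{-j}$ and $\alpha^j\beta^i\alpha^{-j}\beta^{-i}$ acting on a point $(a,b)\in T$, or — more slickly — compute $[\alpha^i,\beta^j]$ directly: $\alpha^i\beta^j:(a,b)\mapsto(\tfrac{1}{b^j}a,\,\tfrac{b^j{}^i}{?}\cdots)$, composing carefully. The cleanest formulation uses the product coordinate: write the action on $u=ab$. Since $\alpha^i$ and $\beta^j$ each translate $u$ by $i$ (resp. $j$) in some additive sense once we pass to the right variable, the commutator $[\alpha^i,\beta^j]$ and $[\alpha^j,\beta^i]$ induce the same net effect because addition of integers is commutative; the point is that the \emph{first-coordinate} multiplier after applying $\alpha^i\beta^j$ is a symmetric rational function of $i$ and $j$ in the combination that appears, and likewise the second-coordinate multiplier, so swapping $i\leftrightarrow j$ throughout changes nothing. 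Concretely, from the displayed formula with $s=2$, $(i_1,j_1,i_2,j_2)=(i,j,-i,-j)$, the products $\Pi(i_1+\cdots)$ become products over shifts by $\{i, i+j, ab\}$ type terms and their symmetry in $i,j$ is manifest once written out. The main obstacle is purely organizational: keeping the telescoping numerator/denominator products correctly aligned through the inductive step and the commutator specialization, and being careful that the invertibility hypothesis on $T$ (namely $ab\notin L$) is exactly what is needed for every intermediate point to remain in $T$ so that the formulas apply at each stage. Once the transformation formula is established with its telescoping shape, both the domain invariance and the commutator identity follow by direct substitution.
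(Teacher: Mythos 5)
Your overall architecture (establish the product formula by induction on the word length, get $T$-invariance from $ab\notin L$, then read off $[\alpha^i,\beta^j]=[\alpha^j,\beta^i]$ from the symmetry of the resulting expressions in $i$ and $j$) is the same as the paper's, which computes the action of $\alpha^{i_1}\beta^{j_1}$ explicitly and leaves the iteration and the commutator check as direct verifications. But there is a genuine error at the very first step that would derail the whole computation: you write $\alpha:(a,b)\mapsto(a,\tfrac{1}{a}\,b)$ and $\beta:(a,b)\mapsto(\tfrac{1}{b}\,a,b)$, and then ``iterate'' to get $\alpha^i:(a,b)\mapsto(a,\tfrac{1}{a^i}b)$. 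Here $H=F$ and $K=\dot F$ are the \emph{additive} groups of the field, so the product $h^f k$ in $K$ is field addition, not multiplication: the correct maps are
\begin{equation*}
\alpha:(a,b)\mapsto\Bigl(a,\ \tfrac{1}{a}+b\Bigr),\qquad
\beta:(a,b)\mapsto\Bigl(a+\tfrac{1}{b},\ b\Bigr),\qquad
\alpha^i:(a,b)\mapsto\Bigl(a,\ \tfrac{i}{a}+b\Bigr).
\end{equation*}
This is what makes the integers $i_1+\cdots+j_s$ enter \emph{additively} with $ab$ in the stated formula; with your multiplicative reading, $\alpha^i$ rescales $b$ by $a^{-i}$ and the claimed telescoping products (and indeed the finiteness of the $\langle\alpha,\beta\rangle$-orbits used later) cannot arise. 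You do hedge toward the right picture (``the product $ab$ behaves additively under the counter''), and indeed the key invariant is that both $\alpha$ and $\beta$ send $ab\mapsto ab+1$, whence $ab\notin L$ guarantees every denominator $i_1+\cdots+j_{s-1}+ab$ is nonzero and $T$ is preserved; but you never commit to the correct formulas, and the ones you do write down are false.

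Once the maps are corrected, the rest of your plan works and matches the paper: a one-step computation gives
\begin{equation*}
\alpha^{i_1}\beta^{j_1}:(a,b)\mapsto\Bigl(\tfrac{i_1+j_1+ab}{i_1+ab}\,a,\ \tfrac{i_1+ab}{ab}\,b\Bigr),
\end{equation*}
the general formula follows by induction, and substituting the partial sums for the word $\alpha^{-i}\beta^{-j}\alpha^{i}\beta^{j}$ (namely $-i,\,-i-j,\,-j,\,0$) versus $\alpha^{-j}\beta^{-i}\alpha^{j}\beta^{i}$ (namely $-j,\,-i-j,\,-i,\,0$) shows the two multipliers coincide, which is exactly the symmetry you invoke. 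So the gap is localized but real: the explicit action of $\alpha$ and $\beta$, which is the foundation of every subsequent calculation, is misidentified.
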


\begin{proof}
We calculate only the action of $\alpha ^{i_{1}}\beta ^{j_{1}}$:%
\begin{eqnarray*}
\left( a,b\right) &\rightarrow &^{\alpha ^{i_{1}}}\text{ \ }\left( a,\frac{%
i_{1}}{a}+b\right) =\left( a,\frac{i_{1}+ab}{ab}b\right) \\
&\rightarrow &^{\beta ^{j_{1}}}\text{ \ }\left( a+\frac{j_{1}}{i_{1}+ab}a,%
\frac{i_{1}+ab}{ab}b\right) \\
&=&\left( \frac{i_{1}+j_{1}+ab}{i_{1}+ab}a,\frac{i_{1}+ab}{ab}b\right) \text{%
.}
\end{eqnarray*}

It is direct to check that the first general non-trivial relation happens
for $k=3$:%
\begin{equation*}
j_{1}=-i_{1}-i_{2},j_{2}=i_{1},i_{3}=-i_{1}-i_{2},j_{3}=i_{2}\text{;}
\end{equation*}%
that is,%
\begin{equation*}
\alpha ^{i_{1}}\beta ^{-i_{1}-i_{2}}\alpha ^{i_{2}}\beta ^{i_{1}}\alpha
^{-i_{1}-i_{2}}\beta ^{i_{2}}=e
\end{equation*}

which in turn is equivalent to%
\begin{equation*}
\left[ \alpha ^{i},\beta ^{j}\right] =\left[ \alpha ^{j},\beta ^{i}\right] 
\text{.}
\end{equation*}
\end{proof}

\begin{lemma}
Let $charac\left( F\right) =2$, $a,b\in F$ such that $a\not=0\not=b,$ $%
ab\not=1$. Define $c=c\left( a,b\right) =\frac{ab}{1+ab}$. Then, $\alpha
^{2}=\beta ^{2}=e$ and for all integers $k,$%
\begin{eqnarray*}
\left( \alpha \beta \right) ^{k} &:&\left( a,b\right) \rightarrow \left(
c^{k}a,c^{-k}b\right) , \\
\left( \alpha \beta \right) ^{k}\beta &:&\left( a,b\right) \rightarrow
\left( c^{k-1}a,c^{-k}b\right) \text{.}
\end{eqnarray*}
\end{lemma}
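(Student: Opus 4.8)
The plan is to verify all formulas by induction on $k$, using the elementary action of $\alpha$ and $\beta$ on a pair $(a,b)$ with $ab\neq 1$ in characteristic $2$. First I would record the base data. Since $h^{f}=\dot h^{-1}$ and $\dot h^{-1}=\dot h$ has no effect beyond notation, the maps from the general setup specialize to $\alpha:(a,b)\to(a,a^{-1}+b)$ and $\beta:(a,b)\to(b^{-1}+a,b)$; in characteristic $2$ one has $a^{-1}+b=\tfrac{1+ab}{a}=\tfrac{b}{c}$ with $c=\tfrac{ab}{1+ab}$, so $\alpha:(a,b)\to(a,c^{-1}b)$ and similarly $\beta:(a,b)\to(c^{-1}a,b)$ after the analogous simplification, where $c$ is unchanged because $\alpha$ and $\beta$ preserve the product $ab$. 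A direct check then gives $\alpha^{2}=\beta^{2}=e$: applying $\alpha$ twice sends $b\mapsto c^{-1}b\mapsto c b/c=b$, once one notes that $c(a,c^{-1}b)=c(a,b)^{-1}$, i.e. the auxiliary constant inverts under a single application of $\alpha$ or $\beta$. This last observation — that $c\mapsto c^{-1}$ under each of $\alpha,\beta$ — is the one small computation that makes everything else fall out, and it follows immediately from $ab\mapsto ab$ and $1+ab\mapsto 1+(ab)^{-1}=\tfrac{1+ab}{ab}$.

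Next I would prove the two displayed formulas for $(\alpha\beta)^{k}$ and $(\alpha\beta)^{k}\beta$ simultaneously by induction on $k\geq 0$. The base case $k=0$ is trivial for the first formula, and $(\alpha\beta)^{0}\beta=\beta$ sends $(a,b)$ to $(c^{-1}a,b)$, matching $c^{k-1}a=c^{-1}a$ at $k=0$. For the inductive step, assume $(\alpha\beta)^{k}:(a,b)\to(c^{k}a,c^{-k}b)$. Applying $\beta$ to $(c^{k}a,c^{-k}b)$: the product is still $ab$, hence the auxiliary constant is still $c$, and $\beta$ multiplies the first coordinate by $c^{-1}$, giving $(c^{k-1}a,c^{-k}b)$, which is the claimed image of $(\alpha\beta)^{k}\beta$. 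Then applying $\alpha$ to that multiplies the second coordinate by $c^{-1}$ — again $c$ is unchanged since $\alpha$ fixes the product — yielding $(c^{k-1}a,c^{-(k+1)}b)$. But $c\mapsto c^{-1}$ under each application of $\alpha$ or $\beta$, so after the even number $2k$ of letters in $(\alpha\beta)^{k}$ the constant is back to $c$, while after $(\alpha\beta)^{k}\beta$ (an odd number of letters) it is $c^{-1}$; tracking this sign carefully is exactly where care is needed. Reconciling the exponent bookkeeping so that $(\alpha\beta)^{k+1}:(a,b)\to(c^{k+1}a,c^{-(k+1)}b)$ closes the induction.

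The step I expect to be the main obstacle is precisely this bookkeeping of the auxiliary constant $c=c(a,b)$ along a word: $c$ depends on the current pair, and although $\alpha,\beta$ preserve $ab$ and hence the unordered value $\{c,c^{-1}\}$, the correct representative alternates with the parity of the number of letters applied so far. I would handle this by proving the stronger statement that tracks both the image pair and the value of the auxiliary constant at each stage, so that the induction hypothesis carries enough information; with that strengthening, each inductive step is a one-line multiplication. Everything else — that $\alpha,\beta$ are involutions, that the product $ab$ is invariant, that the stated closed forms hold — is then routine verification, as the statement itself acknowledges.
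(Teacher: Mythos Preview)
Your overall strategy---reduce $\alpha,\beta$ to multiplication of a coordinate by a power of $c$, then induct---is correct and is essentially what the paper has in mind (the paper states the lemma without proof, treating it as a direct computation). However, there is a concrete error in your execution that unravels the inductive step.

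You assert that $\alpha$ and $\beta$ each preserve the product $ab$. This is false: from $\alpha:(a,b)\mapsto(a,a^{-1}+b)$ one gets new product $a(a^{-1}+b)=1+ab$, and likewise for $\beta$. (Your own sentence ``it follows immediately from $ab\mapsto ab$ and $1+ab\mapsto 1+(ab)^{-1}$'' is internally inconsistent.) What is true is that in characteristic $2$ the map $t\mapsto 1+t$ is an involution, so each of $\alpha,\beta$ sends $ab\leftrightarrow 1+ab$ and hence $c\leftrightarrow c^{-1}$; consequently the composite $\alpha\beta$ preserves $ab$ and $c$. Because of this mistake, your computation of the step after $(\alpha\beta)^{k}\beta$ is wrong: the pair $(c^{k-1}a,c^{-k}b)$ has product $c^{-1}ab=1+ab$, so its auxiliary constant is $c^{-1}$, not $c$; applying $\alpha$ then gives $(c^{k-1}a,c^{-(k-1)}b)$, consistent with the identity $(\alpha\beta)^{k}\beta\alpha=(\alpha\beta)^{k-1}$, and you have gone backwards rather than forwards.

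The fix is to run the induction in the natural order $(\alpha\beta)^{k}\to(\alpha\beta)^{k}\alpha\to(\alpha\beta)^{k+1}$: starting from $(c^{k}a,c^{-k}b)$ with product $ab$ and constant $c$, apply $\alpha$ to get $(c^{k}a,c^{-k-1}b)$; this pair now has product $1+ab$ and constant $c^{-1}$, so applying $\beta$ multiplies the first coordinate by $(c^{-1})^{-1}=c$, yielding $(c^{k+1}a,c^{-(k+1)}b)$. Your derivation of the second formula, applying $\beta$ to $(c^{k}a,c^{-k}b)$ to obtain $(c^{k-1}a,c^{-k}b)$, is correct as stated, since at that moment the product really is $ab$.
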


The orbit of $\left( a,b\right) $ under the action of $\left\langle \alpha
,\beta \right\rangle $ has length $2.o(c)$. As both $ac^{i}\in F$ and $\dot{b%
}\dot{c}^{-i}\in \dot{F}$ invert $w=\left[ a,\dot{b}\right] $ for all $i\geq
0$, we conclude that the subgroup $\left\langle \left( 1+c^{i}\right)
a,\left( 1+\dot{c}^{i}\right) \dot{b}\mid i\not=0\right\rangle $ centralizes 
$w$. If $c$ satisfies a monic polynomial over $GF(2)$, which is the sum of
an odd number of monomials, then $a$ and $\dot{b}$ \ centralize $w$ and so $%
\left( a\dot{b}\right) ^{4}=e$.

\textbf{Proof of Theorem 10}. Let $a\not=0\not=b,$ $ab\not=1$. Since $c=%
\frac{ab}{1+ab}$ is a generator of the multiplicative group $F^{\#}$ we have 
$\left\{ \left( 1+c^{i}\right) a\mid i\geq 0\right\} =F^{\#}$ and so, $%
w=\left( a\dot{b}\right) ^{2}=\left[ a,\dot{b}\right] $ is central in $G$.
Therefore, $G$ is nilpotent of class at most $2$.\newline
\newline
\textbf{Example 5. }We obtain by using GAP:\newline
for $F=GF\left( 2^{3}\right) ,$ the group $G$ has order $2^{8}$ and
nilpotency class $2$;\newline
for $F=GF\left( 2^{4}\right) $ the group $G$ has order $2^{11}$ and
nilpotency class $2$;\newline
for $F=GF\left( 3^{3}\right) $ the group $G$ is abelian, isomorphic to $%
F\times F$.

\subsection{Group extension of $\protect\chi \left( A\right) $}

Let $\widetilde{A},\widetilde{B}$ be groups isomorphic to $A_{2,k}$, $k\geq
3 $, generated by $\left\{ a_{i}\mid 1\leq i\leq k\right\} $, $\left\{
b_{i}\mid 1\leq i\leq k\right\} $, respectively. Define $A=\left\langle
a_{i}\mid 2\leq i\leq k\right\rangle ,B=\left\langle b_{i}\mid 2\leq i\leq
k\right\rangle $ and let $f:A\rightarrow $ $B$ be the isomorphism extended
from the map $a_{i}\rightarrow b_{i}$ $\left( 2\leq i\leq k\right) $. Then, $%
G\left( A,B;f\right) $ is isomorphic to $\chi \left( A_{2,k-1}\right) $.
Both $\widetilde{A},\widetilde{B}$ are central extensions of $A,B$,
respectively. Define $f^{\ast }:\widetilde{A}\rightarrow \widetilde{B}$ by 
\begin{equation*}
a_{1}\rightarrow b_{1},h\rightarrow b_{1}h^{f},a_{1}h\rightarrow h^{f}\text{
for }h\in A^{\#}\text{;}
\end{equation*}%
this corresponds to choosing the bijections $\alpha :e\rightarrow
e,a_{1}\rightarrow b_{1}$, $\gamma :e\rightarrow b_{1},a_{1}\rightarrow e$.

We sketch below a proof that the group $G=G\left( \widetilde{A},\widetilde{B}%
;f^{\ast }\right) $ is metabelian, has order $2^{2^{k}+k-1}$ and has
nilpotency class $k$. Yet, $G$ is not isomorphic to $\chi \left(
A_{2,k}\right) $: for whereas the commutator subgroup of $\chi \left(
A_{2,k}\right) $ is of exponent $2$, $G^{\prime }$ is of exponent $4$.
Indeed, $G^{\prime }$ is generated by $\left[ a_{j}^{f},a_{1}\right] $ for $%
j>1$ (their number is $k-1$, each of order $2$) and by $\left[
a_{j_{1}}^{f},a_{j_{2}},a_{j_{3}},...,a_{j_{r}}\right] $ where $%
j_{1}>j_{2}>...>j_{r}>1$ (their number is $2^{k-1}-k$, each of order $4$).

We develop the proof in steps.

Let $t:\widetilde{A}\rightarrow \widetilde{B}$ be the natural isomorphism
extended from $a_{i}\rightarrow b_{i}$.

(1). The set $\left\{ \left( x,y^{t}\right) \mid x,y\in \widetilde{H}%
\right\} $ partitions under the substitutions $\alpha ,\beta $ into three
types of orbits. Let $u\not=w\in A$. Then orbits types are as follows:

(i) 
\begin{eqnarray*}
&&\{\left( a_{1},u^{t}\right) ,\left( u,u^{t}\right) ,\left( u,b_{1}\right)
,\left( a_{1}u,b_{1}\right) , \\
&&\left( a_{1}u,b_{1}u^{t}\right) ,\left( a_{1},b_{1}u^{t}\right) \}\text{;}
\end{eqnarray*}

(ii) 
\begin{eqnarray*}
&&\{\left( u,w^{t}\right) ,\left( a_{1}uw,w^{t}\right) ,\left(
a_{1}uw,u^{t}\right) ,\left( w,u^{t}\right) , \\
&&\left( w,b_{1}u^{t}w^{t}\right) ,\left( u,b_{1}u^{t}w^{t}\right) \}\text{;}
\end{eqnarray*}

(iii) 
\begin{eqnarray*}
&&\{\left( a_{1}u,b_{1}w^{t}\right) ,\left( a_{1}uw,b_{1}w^{t}\right)
,\left( a_{1}uw,b_{1}u^{t}\right) ,\left( a_{1}w,b_{1}u^{t}\right) , \\
&&\left( a_{1}w,b_{1}u^{t}w^{t}\right) ,\left( a_{1}u,b_{1}u^{t}w^{t}\right)
\}\text{.}
\end{eqnarray*}

(2). The following equalities hold for all $x,y,z\in \widetilde{A}$:

\begin{eqnarray*}
\left[ x,y^{t}\right] &=&\left[ y,x^{t}\right] , \\
\left[ x^{t},y,z^{t}\right] &=&\left[ z,y^{t},x\right] ^{-1}, \\
\left[ z,y^{t},x\right] &=&\left[ y,x^{t},z\right] \text{.}
\end{eqnarray*}

\textbf{Proof of (2)}. In each orbit we find $\left( x,y^{t}\right) ,\left(
y,x^{t}\right) $.

We conclude that for all $x,y,z\in \widetilde{A}$, 
\begin{eqnarray*}
\left[ x,\left( yz\right) ^{t}\right] &=&\left[ x,\left( zy\right) ^{t}%
\right] \\
&=&\left[ y,x^{t}\right] \left[ z,x^{t}\right] \left[ x,z^{t},y^{t}\right] ,
\\
\left[ x,\left( yz\right) ^{t}\right] &=&\left[ yz,x^{t}\right] \\
&=&\left[ y,x^{t}\right] \left[ y,x^{t},z\right] \left[ z,x^{t}\right] ,
\end{eqnarray*}

\begin{eqnarray*}
\left[ z,x^{t}\right] \left[ x,z^{t},y^{t}\right] &=&\left[ y,x^{t},z\right] %
\left[ z,x^{t}\right] , \\
\left[ x,z^{t},y^{t}\right] ^{\left[ x^{t},z\right] } &=&\left[ y,x^{t},z%
\right] , \\
\left[ x,z^{t},y^{t}\right] ^{\left[ z^{t},x\right] } &=&\left[ y,x^{t},z%
\right] , \\
\left[ y^{t},\left[ z^{t},x\right] \right] &=&\left[ y,x^{t},z\right] , \\
\left[ z^{t},x,y^{t}\right] &=&\left[ y,x^{t},z\right] ^{-1}
\end{eqnarray*}%
and

\begin{eqnarray*}
\left[ z^{t},x,y^{t}\right] &=&\left[ y,x^{t},z\right] ^{-1}=\left[ x,y^{t},z%
\right] ^{-1} \\
&=&\left[ x^{t},y,z^{t}\right] =\left[ z,y^{t},x\right] ^{-1}\text{.}
\end{eqnarray*}

(3). We will show that $a_{1}$ inverts $\left[ u,w^{t}\right] $ for all $%
u,w\in \widetilde{A}$.

\textbf{Proof of (3)}. Clearly $a_{1}$ inverts $\left[ a_{1},w^{t}\right] $.

In the calculations below, given a word $\ast \ast x\ast \ast $ we introduce
dots around $x$ as $\ast \ast .x.\ast \ast $ indicating that $x$ will be
substituted by $x^{f^{\ast }}xx^{f^{\ast }}$, if $x\in \widetilde{A}$, or by 
$x^{\left( f^{\ast }\right) ^{-1}}xx^{\left( f^{\ast }\right) ^{-1}}$, if $%
x\in \widetilde{B}$.

Let $u,w\in A$. Then, 
\begin{eqnarray*}
\left[ u,w^{t}\right] ^{a_{1}} &=&\left( a_{1}u\right) .w^{t}.uw^{t}a_{1} \\
&=&\left( uw\right) w^{t}.\left( a_{1}uw\right) .w^{t}a_{1} \\
&=&\left( uw\right) u^{t}\left( a_{1}uw\right) .u^{t}.a_{1} \\
&=&\left( uw\right) u^{t}wu^{t}u \\
&=&\left( wu^{t}wu^{t}\right) ^{u}=\left[ w,u^{t}\right] ^{u} \\
&=&\left[ u,w^{t}\right] ^{u}=\left[ u,w^{t}\right] ^{-1}\text{.}
\end{eqnarray*}%
Furthermore, 
\begin{eqnarray*}
\left[ a_{1}u,b_{1}w^{t}\right] ^{a_{1}}
&=&u.b_{1}w^{t}.a_{1}ub_{1}w^{t}a_{1} \\
&=&uw\left( b_{1}w^{t}\right) .a_{1}uw.b_{1}w^{t}a_{1} \\
&=&uw\left( b_{1}u^{t}\right) \left( a_{1}uw\right) .b_{1}u^{t}.a_{1} \\
&=&uw\left( b_{1}u^{t}\right) .a_{1}w.\left( b_{1}u^{t}\right) a_{1}u \\
&=&uw.b_{1}u^{t}w^{t}.\left( a_{1}w\right) \left( b_{1}u^{t}w^{t}\right)
a_{1}u \\
&=&\left( b_{1}u^{t}w^{t}\right) .a_{1}u.\left( b_{1}u^{t}w^{t}\right) a_{1}u
\\
&=&\left( b_{1}w^{t}\right) .a_{1}u.\left( b_{1}w^{t}\right) a_{1}u\text{.}
\\
&=&\left[ b_{1}w^{t},a_{1}u\right] \text{.}
\end{eqnarray*}

(4). We claim 
\begin{equation*}
\left[ \left[ z^{t},x\right] ,\left[ z^{t},x\right] ^{y}\right] =e,\left[
z^{t},x\right] ^{y}=\left[ z^{t},x\right] ^{y^{t}}
\end{equation*}

and the group $G$ is metabelian.

\textbf{Proof of (4)}. Apply $a_{1}$ to

\begin{equation*}
\left[ x,\left( yz\right) ^{t}\right] =\left[ x,\left( zy\right) ^{t}\right]
=\left[ x,y^{t}\right] \left[ x,z^{t}\right] \left[ x,z^{t},y^{t}\right] 
\text{.}
\end{equation*}

Then

\begin{eqnarray*}
\left[ \left( yz\right) ^{t},x\right] &=&\left[ y^{t},x\right] \left[ z^{t},x%
\right] \left[ x,z^{t},y^{t}\right] ^{a_{1}} \\
&=&\left[ y^{t}z^{t},x\right] =\left[ y^{t},x\right] ^{z^{t}}\left[ z^{t},x%
\right] \\
&=&\left[ y^{t},x\right] \left[ y^{t},x,z^{t}\right] \left[ z^{t},x\right] ,
\end{eqnarray*}

\begin{eqnarray*}
\left[ x,z^{t},y^{t}\right] ^{a_{1}} &=&\left[ y^{t},x,z^{t}\right] ^{\left[
z^{t},x\right] } \\
&=&\left[ z,x^{t},y\right] ^{-\left[ z^{t},x\right] } \\
&=&\left[ z^{t},x,y\right] , \\
\left[ x,z^{t},y^{t}\right] &=&\left[ z^{t},x,y\right] ^{a_{1}} \\
&=&\left[ x,z^{t},y\right]
\end{eqnarray*}

\begin{eqnarray*}
\left[ x,z^{t},y^{t}\right] &=&\left[ x,z^{t},y\right] , \\
\left[ x,z^{t},y\right] &=&\left[ x^{t},z,y^{t}\right] ^{-1} \\
&=&\left[ z^{t},x,y^{t}\right] ^{-1}\text{,} \\
\left[ \left[ x,z^{t}\right] ^{-1},y^{t}\right] &=&\left[ \left[ x,z^{t}%
\right] ,y^{t}\right] ^{-1}\text{.}
\end{eqnarray*}%
Thus,

\begin{equation*}
\left[ x,z^{t}\right] ^{y^{t}}=\left[ x,z^{t}\right] ^{y}
\end{equation*}%
and%
\begin{equation*}
\left[ x,z^{t}\right] \text{ commutes with }\left[ x,z^{t}\right] ^{y^{t}}%
\text{.}
\end{equation*}

Observe that%
\begin{equation*}
\left[ x,y^{t}\right] ^{uv^{t}}=\left[ x,y^{t}\right] ^{u^{t}v^{t}}=\left[
x,y^{t}\right] ^{\left( uv\right) ^{t}}=\left[ x,y^{t}\right] ^{uv}
\end{equation*}
and so, $\left[ x,y^{t}\right] ^{\left[ u,v^{t}\right] }=\left[ x,y^{t}%
\right] $. It follows that $G$ is metabelian.

(4.1). The commutator subgroup is generated by%
\begin{equation*}
\left[ a_{j}^{t},a_{i}\right] ^{w}\text{ where }j>i,w\in \left\langle
a_{r}\mid r\not=1,i,j\right\rangle \text{.}
\end{equation*}

We can improve upon the description of this generating set by using:%
\begin{eqnarray*}
\left[ a_{j}^{t},a_{i},a_{k}\right] ^{a_{i}}\left[ a_{k},a_{j}^{t},a_{i}%
\right] ^{a_{j}^{t}} &=&e, \\
\left[ a_{i},a_{j}^{t},a_{k}\right] \left[ a_{j},a_{k}^{t},a_{i}\right]
^{a_{j}} &=&e, \\
\left[ a_{i},a_{j}^{t},a_{k}\right] \left[ a_{k}^{t},a_{j},a_{i}\right] &=&e,
\end{eqnarray*}%
\begin{eqnarray*}
\left[ a_{j}^{t},a_{i},a_{k}\right] &=&\left[ a_{k}^{t},a_{j},a_{i}\right] =%
\left[ a_{j}^{t},a_{k},a_{i}\right] \\
&=&\left[ a_{i}^{t},a_{k},a_{j}\right] =\left[ a_{k}^{t},a_{i},a_{j}\right] =%
\left[ a_{i}^{t},a_{j},a_{k}\right] \text{.}
\end{eqnarray*}%
Therefore the generators have the form%
\begin{equation*}
\left[ a_{j_{1}}^{t},a_{j_{2}},a_{j_{3}},...,a_{j_{r}}\right] \text{ where }%
j_{1}\geq j_{2}\geq ...\geq j_{r}\text{.}
\end{equation*}%
If $j_{1}=j_{2}$ then 
\begin{equation*}
\left[ a_{j_{1}}^{t},a_{j_{2}}\right] =\left[ a_{j_{1}}^{t},a_{1}\right] ,
\end{equation*}%
\begin{equation*}
\left[ a_{j_{1}}^{t},a_{j_{2}},a_{j_{3}}\right] =\left[
a_{j_{1}}^{t},a_{1},a_{j_{3}}\right] =\left[ a_{j_{1}}^{t},a_{j_{3}},a_{1}%
\right] =\left[ a_{j_{1}}^{t},a_{j_{3}}\right] ^{2}\text{.}
\end{equation*}%
Thus $G^{\prime }$ is generated by:%
\begin{equation*}
\left[ a_{j}^{t},a_{1}\right] \text{ }\left( 1<j\leq k\right)
\end{equation*}%
(in total of $k-1$, each of order dividing $2$) and by 
\begin{equation*}
\left[ a_{j_{1}}^{t},a_{j_{2}},a_{j_{3}},...,a_{j_{r}}\right] \text{ for all 
}j_{1}>j_{2}>...>j_{r}>1
\end{equation*}%
(in total of $2^{k-1}-k$, each of order dividing $4$). Hence, $\left\vert
G^{\prime }\right\vert $ divides $%
2^{k-1}.4^{2^{k-1}-k}=2^{k-1+2^{k}-2k}=2^{2^{k}-k-1}$ and $\left\vert
G\right\vert $ divides $2^{2^{k}-k-1}2^{2k}=2^{2^{k}-1}2^{k}$. The
nilpotency class of $G$ is at most $k$ and the commutator of highest weight
is apparently\newline
$\left[ a_{k}^{t},a_{k-1},a_{k-2},...,a_{1}\right] $.

Rather than effecting the final construction, we just remark that
computations in GAP confirm the structural information obtained above for $%
k\leq 5$.

\subsection{A transposition}

Let $A=A_{2,k}$ and let $f$ correspond to a transposition. Since $SL\left(
k,2\right) $ is $2$-transitive on $A_{2,k}^{\#}$, any transposition of $%
A_{2,k}^{\#}$ is equivalent to $f$. Detailed analysis of $G\left(
A_{2,k};f\right) $ indicates that it has the same order as $\chi \left(
A\right) $, but not isomorphic to the latter. Again, this is confirmed by
computations in GAP.

\end{document}